\newcommand{\precdot}{\prec\mathrel{\mkern-3mu}\mathrel{\cdot}}
\newcommand{\succdot}{\mathrel{\cdot}\mathrel{\mkern-3mu}\succ}
\theoremstyle{definition}
\newtheorem{Def}[]{Definition}[section]
\newtheorem{Pro}[Def]{Proposition}
\newtheorem{Rem}[Def]{Remark}
\newtheorem{Lem}[Def]{Lemma}
\newtheorem{Theo}[Def]{Theorem}
\newtheorem{Cor}[Def]{Corollary}
\newtheorem{Ques}[Def]{Question}
\newtheorem{Con}[Def]{Construction}
\newtheorem*{Ques*}{Question}
\DeclareMathOperator{\Ext}{Ext}
\DeclareMathOperator{\Hom}{Hom}
\DeclareMathOperator{\RHom}{\mathbb{R}\strut\kern-.2em\operatorname{Hom}}
\DeclareMathOperator{\Ker}{Ker}
\DeclareMathOperator{\SL}{SL}
\DeclareMathOperator{\GL}{GL}
\DeclareMathOperator{\Db}{\mathrm{D}^{\mathrm{b}}}
\DeclareMathOperator{\spec}{Spec}
\DeclareMathOperator{\Irr}{Irr}
\DeclareMathOperator{\diag}{diag}
\newcommand{\bbone}{\text{\usefont{U}{bbold}{m}{n}1}}
\renewcommand{\mod}{\operatorname{mod}}
\renewcommand{\Bbbk}{k}
\renewcommand{\Im}{\operatorname{Im}}
\title{A classification of $n$-representation infinite algebras of type $\Tilde{A}$}
\author{Darius Dramburg}
\address{Darius Dramburg, Department of Mathematics, Uppsala University, Box 480, 751 06 Uppsala, Sweden}
\email{darius.dramburg@math.uu.se}
\author{Oleksandra Gasanova}
\address{Oleksandra Gasanova, Faculty of Mathematics, University of Duisburg-Essen}
\email{oleksandra.gasanova@uni-due.de}
\subjclass{16G20, 16S35, 16E65, 14E16}
\date{\today}
\begin{document}

\begin{abstract}
    We classify $n$-representation infinite algebras $\Lambda$ of type $\Tilde{A}$. This type is defined by requiring that $\Lambda$ has higher preprojective algebra $\Pi_{n+1}(\Lambda) \simeq k[x_1, \ldots, x_{n+1}] \ast G$, where $G \leq \SL_{n+1}(k)$ is finite abelian. For the classification, we group these algebras according to a more refined type, and give a combinatorial characterisation of these types. This is based on so-called height functions, which generalise the height function of a perfect matching in a dimer model. In terms of toric geometry and McKay correspondence, the types form the lattice simplex of junior elements of $G$. We show that all algebras of the same type are related by iterated $n$-APR tilting, and hence are derived equivalent. By disallowing certain tilts, we turn this set into a finite distributive lattice, and we construct its maximal and minimal elements. 
\end{abstract}

\maketitle

\section*{Introduction}
The famous McKay correspondence, named after John McKay who first discovered it \cite{MR0604577}, gives a bijection between finite subgroups of $\SL_2(\mathbb{C})$ and simply laced Dynkin diagrams. It can be realised geometrically by studying the associated quotient singularities $\mathbb{C}^2/G$ and their resolution graphs, and relating them to the McKay quivers $Q(G)$ of the involved groups. The McKay quiver $Q(G)$ turns out to be a doubled version of the corresponding \emph{extended} Dynkin diagram. 

Another interpretation involves the skew-group algebra $\mathbb{C}[x,y] \ast G$, which is Morita equivalent to a quotient of the path algebra $kQ(G)$ by commutativity relations $I$. This quotient $kQ(G)/I$ is the preprojective algebra of an acyclic orientation of the underlying extended Dynkin diagram. In fact, different acyclic orientations yield the same preprojective algebra, so via the preprojective algebra every basic connected tame hereditary algebra appears in the correspondence.  

It is of great interest to generalise the correspondence, or at least parts of it, to higher dimensions. Geometrically, one can consider finite subgroups of $\SL_{n+1}(\mathbb{C})$. On the representation-theoretic side, a generalisation of the situation is provided by Iyama's higher Auslander-Reiten theory (AR-theory). 

The higher versions of hereditary representation infinite algebras are called $n$-representation infinite algebras, which were introduced by Herschend, Iyama and Oppermann in \cite{HIO}. In the same paper, the authors also introduced $n$-hereditary algebras, and proved in a dichotomy-style theorem that this class of algebras naturally splits into $n$-representation infinite algebras and $n$-representation finite algebras. 

The $n$-representation finite algebras and the related $n$-cluster tilting subcategories have been studied extensively since their introduction \cite{Herschend_Iyama_2011, IyamaOppermann, IyamaOppermannStable, DarpöIyama}. The infinite side has received less attention, and fewer examples are known, partly due to the difficulty of checking directly whether a given algebra satisfies the definition of being $n$-representation infinite. There are some general constructions like tensor products \cite{HIO} and $n$-Auslander-Platzeck-Reiten ($n$-APR) tilting \cite{MizunoYamaura} which produce $n$-representation infinite algebras directly, and they may be obtained by finding suitable tilting objects in derived categories of varieties \cite{BuchweitzHille}. 

The main source, however, is the class of $(n+1)$-Calabi-Yau algebras, taking a detour via the higher preprojective algebras: In detail, to an $n$-hereditary algebra $\Lambda$, Oppermann and Iyama \cite{IyamaOppermannStable} associate the higher preprojective algebra as the tensor algebra
\[ \Pi_{n+1}(\Lambda) = T_\Lambda \Ext^{n+1}(D(\Lambda), \Lambda),  \]
which naturally comes with a grading induced from tensor degrees. Then one obtains $\Lambda$ as the degree $0$ part of $\Pi_{n+1}(\Lambda)$. It was shown \cite{KellerVandenBergh, HIO} that if $\Lambda$ is $n$-representation infinite, then $\Pi_{n+1}(\Lambda)$ is locally finite dimensional and \emph{bimodule $(n+1)$-Calabi-Yau of Gorenstein parameter $1$}. Furthermore, these properties characterise all $n$-representation infinite algebras. Thus, one may take an ungraded $(n+1)$-Calabi-Yau algebra and produce a grading of Gorenstein parameter $1$ to construct an $n$-representation infinite algebra. This leads to the following general question \cite{Thibault}. 

\begin{Ques*}
    Let $\Gamma$ be bimodule $(n+1)$-Calabi-Yau. When does there exist a grading making $\Gamma$ into an $(n+1)$-preprojective algebra? 
\end{Ques*}

When the Calabi-Yau algebra $\Gamma$ is basic, the gradings we refer to are called cuts. Performing an $n$-APR tilt then amounts to a change of grading called cut mutation \cite{IyamaOppermann}. We follow this strategy of computing preprojective algebras, and use it to give a description of all $n$-representation infinite algebras of type $\Tilde{A}$. This type is defined by analogy with the classical McKay correspondence. 
A $1$-representation infinite algebra $\Lambda$ is nothing but a hereditary representation infinite algebra, and type $\Tilde{A}$ means that $\Lambda \simeq kQ$ where $Q$ is an acyclic orientation of an extended Dynkin diagram of type $\Tilde{A}$. In this case, the preprojective algebra $\Pi_2(kQ)$ is isomorphic to a skew-group algebra $k[x_1, x_2] \ast G$, where $G \leq \SL_2(k)$ is a finite abelian group acting on the variables of $k[x_1, x_2]$. The higher case is defined by requiring that $\Lambda$ is $n$-representation infinite and that $\Pi_{n+1}(\Lambda) \simeq R \ast G$, where $G \leq \SL_{n+1}(k)$ is a finite abelian group acting on $R = k[x_1, \ldots, x_{n+1}]$. 

The first subtlety that arises with this definition is that not every such skew-group algebra $R \ast G$ will appear as a higher preprojective algebra \cite{Thibault}. Additionally, it is a priori not obvious how to decide whether a given $R \ast G$ does appear, and if so how to find the correct grading that will make it into a higher preprojective algebra. Furthermore, there may be several different gradings that turn $R \ast G$ into a higher preprojective algebra. We investigated this question for the case $n=2$ in \cite{DramburgGasanova} and gave a classification for type $\Tilde{A}$. The case $n=2$ has also been studied from a more geometric perspective by Nakajima using dimer models in \cite{Nakajima}. 

We exploit the same ideas we used for $n=2$ in \cite{DramburgGasanova}, and show that they generalise to arbitrary $n$. More precisely, we assign a more refined \emph{type} to the cuts of interest. We then show that these types naturally are lattice vectors in some simplex $P$, see \Cref{Theo: Divisibility conditions}. Deciding whether a given algebra $R \ast G$ has a preprojective cut is then equivalent to deciding whether $P$ has an internal lattice point, and the negative case splits naturally into some lower dimensional families and finitely many exceptions, see \Cref{Cor: No higher preprojective grading if exceptional or projecting}. 

The analogy with McKay correspondence then continues as follows. Since $G$ is abelian, the variety $X = \spec(R^G)$ associated to the invariant ring is toric, hence defined by a lattice cone $C$. The simplex $P$, which we obtain in a purely combinatorial way, turns out to be isomorphic to the simplex one obtains by intersecting $C$ with a suitable hyperplane at ``height $1$'', see \Cref{Pro: Polytopes agree}. In standard toric terms, this can be seen as a consequence of a cut, that is a grading, on $R \ast G$ defining a certain $1$-parameter subgroup of $X$. Using the language of Ito and Reid from \cite{ItoReid}, we can see that our types correspond to junior elements in $G$, which are known to form the same simplex we have been considering, see \Cref{Rem: Ito-Reid interpretation}. This completes the pictures, since our initial construction, following Amiot, Iyama and Reiten \cite{AIR}, is based on finding certain junior elements. The geometric perspective shows that the decision problem of whether $R \ast G$ has such a higher preprojective grading is equivalent to deciding whether $G$ contains certain nontrivial junior elements, which in turn correspond to exceptional crepant divisors in a relative minimal model of $X$. While this is of course computable for a given $G$, we are not aware of a simple criterion similar to \cite[Theorem 5.15]{Thibault} that allows us to decide the existence of such a junior element without computing all elements of $G$, or the simplex $P$, or some geometric data. 
We furthermore note that the quivers, and in particular the degree $0$ parts of our cuts, have been recently considered by Yamagishi in \cite{YamagishiIAbelian}. There, Yamagishi works with $G$-constellations and their moduli spaces for a finite abelian group $G \leq \SL_{n+1}(k)$. The motivation is purely geometric, and we are not aware of an a priori reason for why quivers of $n$-representation infinite algebras should appear. 

After fixing an algebra $R \ast G$ and a type, we investigate how to obtain all the cuts of this type. We show that cut mutation is transitive on the set of cuts of a given type, see \Cref{Cor: Mutation is transitive}, which was already known for $n=2$ from the dimer perspective \cite{Nakajima} or from combinatorial considerations \cite{DramburgGasanova}. In this way, we answer a question of Amiot, Iyama and Reiten \cite{AIR} by showing that all cuts are mutation equivalent to cuts arising from cyclic groups. 

This also leads to an algorithmic classification of $n$-representation infinite algebras of type $\Tilde{A}$: For each abelian group $G$ of order $m$, we can go through all embeddings of $G$ into $\SL_{n+1}(k)$ by considering generating sets of $\hat{G}$ of size $n$. In the next step, for a fixed embedding we can calculate the simplex $P$ of types of cuts. Then for each type, we can construct a cut. Finally, we can apply mutations until we found all cuts of the same type. In this way, we can list all type $\Tilde{A}$ algebras with $m$ simple modules.

Our transitivity result is part of a broader observation. The set of cuts of a fixed type can be given the structure of a finite distributive lattice, whose cover relations are given by certain mutations. More precisely, we fix a vertex in the quiver $Q$ of $R \ast G$ at which we do not allow mutations, and the remaining mutations give the cover relations, see \Cref{Pro: Min and max of cuts} and \Cref{Theo: mutations are covers}. One way to view this lattice is by considering the height functions on $Q$, which we define in \Cref{Sec: Height functions}. These functions generalise the $n=2$ theory of dimer models, and they turn out to capture our combinatorics in any dimension. Since $Q$ is finite, the height functions are determined by finitely many values, so they can be seen as vectors in some $\mathbb{Z}^m$, and the lattice of cuts of a fixed type translates via height functions to a sublattice of $(\mathbb{Z}^m, \leq)$. A more thorough investigation of these lattices seems warranted, and we initiate this by constructing their minimal and maximal elements. 

We may view $Q$ as being embedded on the $n$-torus, and the height functions as a replacement for perfect matchings of the dimer model. This idea was previously considered from a statistical point of view in \cite{LammersDimer} for bounded regions. It would be interesting to generalise our observations to quivers of other $(n+1)$-Calabi-Yau algebras.

\subsection*{Outline}
We begin by recalling some facts about $n$-representation infinite algebras, their preprojective algebras, quiver descriptions, higher preprojective gradings and skew-group algebras in \Cref{Sec: Setup}. The first technical tool we need is called the height function, which is the main focus in \Cref{Sec: Height functions}, and it is used to prove one implication of our first main theorem \Cref{Theo: Divisibility conditions} in \Cref{Pro: Divisibility conditions are necessary}. In \Cref{Sec: Construction of cuts}, we then prove the other implication via an explicit construction in \Cref{Pro: Cut constr}, and comment on the problem of deciding the existence of cuts in \Cref{SSec: Hollow lattice polytopes}. After that, we reinterpret \Cref{Theo: Divisibility conditions} and give explicit connections to toric geometry in \Cref{Sec: Toric geometry}. Finally, in \Cref{Sec: Mutation lattices}, we consider cut mutation. In \Cref{Cor: Cuts are lattice}, we show that cuts of the same type form a finite distributive lattice, and obtain in \Cref{Cor: Mutation is transitive} that they are related by a sequence of mutations. We construct the maximal element of each lattice in \Cref{Theo: constructing max}. 

\subsection*{Notation}
We use the following notation and conventions throughout. 
\begin{enumerate}
    \item Unless specified otherwise, we denote by $\Bbbk$ an algebraically closed field of characteristic $0$.
    \item All undecorated tensors $\otimes $ are taken over the field $\Bbbk$.
    \item We denote by $\diag(x_1, \ldots, x_n) $ the diagonal $(n \times n)$-matrix with diagonal entries $x_1, \ldots, x_n$. 
    \item We write $\frac{1}{m}(e_1, \ldots, e_n)$ for the diagonal matrix $\diag(\xi^{e_1}, \ldots, \xi^{e_n})$ of order $m$, where $\xi$ is a primitive $m$-th root of unity which will be specified when necessary. 
    \item A quiver $Q$ consists of vertices $Q_0$, arrows $Q_1$ and the source and target maps $s, t \colon Q_1 \to Q_0$.
    \item For $ a \in \mathbb{Z}$ and $n \in \mathbb{N}$, we write $a \bmod n$ for the smallest non-negative representative of the residue class $a + n\mathbb{Z}$. 
    \item For $ a, b \in \mathbb{Z}$ and $n \in \mathbb{N}$ we write $ a \equiv_n b$ if $a+ n \mathbb{Z} = b + n \mathbb{Z}$. We drop the subscript $n$ when no confusion is possible. 
\end{enumerate}

\section{Setup}\label{Sec: Setup}
In this section, we collect the necessary definitions and tools we will use throughout. We work over an algebraically closed field $k$ of characteristic $0$. 

\subsection{Higher representation-infinite algebras and their preprojective algebras}
Let $\Lambda$ be a finite dimensional $k$-algebra of finite global dimension $n \geq 1$. In Iyama's higher Auslander-Reiten theory, the following functors play an important role:
\[ \nu = D\RHom_\Lambda(\--,\Lambda) \colon \Db( \mod \Lambda) \to \Db( \mod \Lambda)  \] 
is the \emph{derived Nakayama functor} on the bounded derived category of finitely generated $\Lambda$-modules, and its quasi-inverse is
\[\nu^{-1} = \RHom_{\Lambda^{\mathrm{op}}}(D(\--), \Lambda) \colon \Db( \mod \Lambda) \to \Db( \mod \Lambda).\]
The \emph{derived higher Auslander-Reiten translation} is the autoequivalence 
\[\nu_n := \nu \circ [-n] \colon \Db(\mod \Lambda ) \to \Db( \mod \Lambda).\]

\begin{Def}\cite[Definition 2.7]{HIO} 
The algebra $\Lambda$ is called \emph{$n$-representation infinite} if for any projective module $P$ in $ \mod \Lambda$ and integer $i \geq 0$ we have 
\[ \nu_n^{-i} P \in   \mod \Lambda . \]
\end{Def}

For a given $\Lambda$, this condition may be difficult to check since it requires knowledge of the infinitely many translates $\nu_n^{-i}(\Lambda)$. However, the \emph{higher preprojective algebra} allows a different characterisation. 

\begin{Def}\cite[Definition 2.11]{IyamaOppermannStable}
Let $\Lambda$ be of global dimension at most $n$. The $(n+1)$-preprojective algebra of $\Lambda$ is 
\[ \Pi_{n+1}(\Lambda) = T_\Lambda \Ext^{n}_\Lambda(D(\Lambda), \Lambda). \]
\end{Def}

We will refer to such an algebra $\Pi_{n+1}(\Lambda)$ simply as preprojective. The preprojective algebra comes naturally equipped with a grading, given by tensor degree, and we recover the original algebra as the degree $0$ part, i.e. we have $\Lambda = \Pi_{n+1}(\Lambda)_0$. The properties of this grading, together with being Calabi-Yau, essentially determine such preprojective algebras. 

\begin{Def}\cite[Definition 3.1]{AIR} \label{Def: n-CY GP a}
Let $\Gamma=\bigoplus_{i \geq 0}\Gamma_i$ be a positively graded $k$-algebra. We call $\Gamma$ a \emph{bimodule $(n+1)$-Calabi-Yau algebra of Gorenstein parameter $a$} if there exists a bounded graded projective $\Gamma$-bimodule resolution
$P_\bullet$ of $\Gamma$ and an isomorphism of complexes of graded $\Gamma$-bimodules
\[ P_\bullet \simeq \Hom_{\Gamma^{\mathrm{e}}}(P_\bullet, \Gamma^{\mathrm{e}})[n+1](-a).  \]
\end{Def}

Note that if $\Gamma$ is bimodule $(n+1)$-Calabi-Yau of Gorenstein parameter $a$, then the ungraded version of $\Gamma$ is simply $(n+1)$-Calabi-Yau. Conversely, different gradings may be introduced on $\Gamma$, so that the projective bimodule resolution becomes graded, and we can study the resulting Gorenstein parameter. The relevance of the Gorenstein parameter comes from the following theorem. 

\begin{Theo}\cite[Theorem 4.35]{HIO}\label{Theo: HPG is f.d. GP1}
There is a bijection between $n$-representation infinite algebras $\Lambda$ and bimodule $(n+1)$-Calabi-Yau algebras $\Gamma$ of Gorenstein parameter 1 with $\dim_\Bbbk \Gamma_i < \infty $ for all $i \in \mathbb{N}$, both sides taken up to isomorphism. The bijection is given by 
\[ \Lambda \mapsto \Pi_{n+1}(\Lambda) \quad \mathrm{ and } \quad \Gamma \mapsto \Gamma_0.  \]
\end{Theo}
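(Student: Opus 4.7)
The plan is to prove the two directions separately and then check they are mutually inverse. For the forward direction, starting from an $n$-representation infinite $\Lambda$, I would first identify the graded pieces of $\Pi := \Pi_{n+1}(\Lambda)$ with the higher Auslander--Reiten translates. Since $\gldim \Lambda \leq n$, the functor $\nu_n^{-1}$ takes a projective $\Lambda$-module concentrated in degree $0$ to a module also in degree $0$, and one checks $\Ext^n_\Lambda(D\Lambda, \Lambda)^{\otimes_\Lambda i} \cong \nu_n^{-i}(\Lambda)$ as a $\Lambda$-bimodule. The $n$-representation infinite hypothesis then guarantees that each $\Pi_i$ is a finitely generated $\Lambda$-module, so finite dimensional over $k$.

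The next and hardest step is to exhibit the bimodule Calabi--Yau structure with Gorenstein parameter exactly $1$. The approach I would take is to build an explicit graded projective $\Pi^{\mathrm{e}}$-resolution of $\Pi$ of length $n+1$ by ``induction'' from the minimal projective $\Lambda^{\mathrm{e}}$-resolution of $\Lambda$, which has length $\leq n$. Concretely, one takes
\[ P_i := \Pi \otimes_\Lambda \Omega^{i}_{\Lambda^{\mathrm{e}}}(\Lambda) \otimes_\Lambda \Pi \]
for $0 \leq i \leq n$, where the differentials are induced by those of the $\Lambda$-bimodule resolution together with the multiplication $\Pi \otimes_\Lambda \Ext^n_\Lambda(D\Lambda, \Lambda) \to \Pi$; an additional term $P_{n+1}$ implementing the grading shift $(-1)$ closes up the complex. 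The key computation is to check that the $\Pi^{\mathrm{e}}$-dual complex $\Hom_{\Pi^{\mathrm{e}}}(P_\bullet, \Pi^{\mathrm{e}})[n+1](-1)$ is isomorphic to $P_\bullet$; I expect this is where most of the work sits, since it relies on the isomorphism $\Ext^n_\Lambda(D\Lambda, \Lambda) \otimes_\Lambda D\Lambda \cong \Lambda$ in a derived sense, and on the bimodule dualities being compatible with the tensor algebra structure.

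For the reverse direction, given $\Gamma$ positively graded, bimodule $(n+1)$-Calabi--Yau of Gorenstein parameter $1$ with finite dimensional graded pieces, I would set $\Lambda := \Gamma_0$. Restricting the graded projective $\Gamma^{\mathrm{e}}$-resolution of $\Gamma$ in Definition \ref{Def: n-CY GP a} to degree $0$, and using that tensoring down $\-- \otimes_\Gamma \Lambda$ is exact on the summands in degree zero, I would extract a projective $\Lambda^{\mathrm{e}}$-resolution of $\Lambda$ of length $n$, giving $\gldim \Lambda \leq n$. Unwinding the Calabi--Yau self-duality in degree $1$ identifies $\Gamma_1$ with $\Ext^n_\Lambda(D\Lambda, \Lambda)$ as a $\Lambda$-bimodule, and the multiplication on $\Gamma$ supplies a surjection $T_\Lambda \Gamma_1 \twoheadrightarrow \Gamma$; comparing Hilbert series piece by piece via the analogue of the graded-piece identification from the forward direction shows this surjection is an isomorphism, so $\Gamma \cong \Pi_{n+1}(\Lambda)$. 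Finally, the finite dimensionality of each $\Gamma_i \cong \nu_n^{-i}(\Lambda)$ forces $\nu_n^{-i}(\Lambda) \in \mod \Lambda$, giving the $n$-representation infinite condition.

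The remaining point is that the two assignments are mutually inverse: $\Pi_{n+1}(\Lambda)_0 = \Lambda$ holds tautologically from tensor degree, and $\Pi_{n+1}(\Gamma_0) \cong \Gamma$ is exactly the reconstruction performed above. The main obstacle is the explicit verification of the self-dual resolution in the forward direction; once this is in place, the reverse direction is essentially a matter of reading off the data of the resolution, and the inverse property is automatic.
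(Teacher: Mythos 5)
The paper does not prove this theorem: it is stated as a citation of \cite[Theorem 4.35]{HIO}, and the heavy lifting there ultimately rests on Keller's ``Deformed Calabi--Yau completions'' (with Van den Bergh's appendix), which establishes that $\Pi_{n+1}(\Lambda)$ is bimodule $(n+1)$-Calabi--Yau of Gorenstein parameter $1$ whenever $\gldim\Lambda\le n$. So there is no proof in the paper against which to compare your attempt; you are reconstructing HIO's proof.

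As to the substance of your sketch, the overall strategy is recognisably the right one (identify $\Pi_i$ with $\nu_n^{-i}(\Lambda)$, build a self-dual graded projective $\Pi^{\mathrm{e}}$-resolution from the $\Lambda^{\mathrm{e}}$-resolution, read off $\gldim\Gamma_0\le n$ from degree $0$ of the $\Gamma^{\mathrm{e}}$-resolution), but there are two genuine problems. First, the formula $P_i := \Pi \otimes_\Lambda \Omega^{i}_{\Lambda^{\mathrm{e}}}(\Lambda) \otimes_\Lambda \Pi$ uses the syzygies $\Omega^{i}$, which are not projective $\Lambda$-bimodules, so the induced $P_i$ are not projective $\Pi^{\mathrm{e}}$-modules; you need the \emph{terms} of the minimal projective $\Lambda^{\mathrm{e}}$-resolution, not its syzygies. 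Second, even with that fixed, the complex $\Pi\otimes_\Lambda Q_\bullet\otimes_\Lambda\Pi$ is not itself a resolution of $\Pi$ --- it computes $\operatorname{Tor}^\Lambda_\bullet(\Pi,\Pi)$ --- and the crucial extra piece $P_{n+1}$ together with its differentials has to be specified and shown to make the mapping cone self-dual after the shift $[n+1](-1)$. You flag this as ``where most of the work sits,'' which is accurate, but the sketch gives no indication of how to produce this self-duality, and the known proof does not proceed by building the resolution by hand: it uses the dg formalism of Calabi--Yau completions, which makes the bimodule self-duality structural rather than an explicit verification.

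In the reverse direction your degree-zero restriction argument is essentially correct, but note that you should take the degree-$0$ graded components of the complex rather than apply $-\otimes_\Gamma\Lambda$ (the two do not coincide on shifted projectives: $(\Gamma(-1))_0=0$ while $\Gamma(-1)\otimes_\Gamma\Lambda\simeq\Lambda(-1)\ne 0$). Once phrased as taking degree-$0$ parts, the Gorenstein-parameter-$1$ duality forces the top term $P_{n+1}$ to be generated in degree $1$, so its degree-$0$ part vanishes and one does obtain a projective $\Lambda^{\mathrm{e}}$-resolution of length $n$; this point deserves to be made explicit, since it is exactly where the Gorenstein parameter $1$ enters.
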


In particular, this means that being higher preprojective is the additional structure of a specific grading on a Calabi-Yau algebra. Just like the term \emph{graded algebra} means an algebra $\Gamma$ with a chosen grading, we therefore mean by \emph{preprojective algebra} an algebra $\Gamma$ with a chosen grading such that $\Gamma = \Pi_{n+1}(\Gamma_0)$. We can therefore phrase the general motivating question as follows. 

\begin{Ques}
    Let $\Gamma$ be bimodule $(n+1)$-Calabi-Yau. Is $\Gamma$ higher preprojective? How many different gradings exist that make $\Gamma$ higher preprojective? 
\end{Ques}

We restrict our interest to the so-called type $\Tilde{A}$ in this article. The type $\Tilde{A}$ is also defined in terms of the preprojective algebra. For this definition, we briefly recall the construction of a skew-group algebra. Given a $k$-algebra $A$ and a group $G$ acting on $A$ via automorphisms, the skew-group algebra is the tensor product 
\[ A \otimes  kG \]
with multiplication given by 
\[ (a \otimes g) \cdot (b \otimes h) = (ag(b) \otimes gh).  \]
In particular, given a group $G \leq \SL_{n+1}(k)$, we have that $G$ acts by left-multiplication on $V = \langle x_1, x_2, \ldots, x_{n+1} \rangle$, and hence on the polynomial ring $R = \Bbbk[V]$. 

\begin{Def}
    An $n$-representation infinite algebra $\Lambda$ has type $\Tilde{A}$ if $\Pi_{n+1}(\Lambda) \simeq k[x_1, \ldots, x_{n+1}] \ast G$ for a finite abelian group $G \leq \SL_{n+1}(k)$. 
\end{Def}

For the purposes of classifying the $n$-representation infinite algebras we may therefore first compute $R \ast G$, and then compute all possible higher preprojective gradings. However, there are many examples of finite abelian groups $G \leq \SL_{n+1}(k) $ such that $R \ast G$ can not be endowed with this additional structure, and we are not aware of an easy criterion to decide when this is possible. We note an important result in this direction, proven by Thibault also for non-abelian groups.

\begin{Theo}\cite[Theorem 5.15]{Thibault} \label{Theo: Thibaults Criterion}
Suppose that the embedding of a finite subgroup $G \leq \SL_{n+1}(k)$ factors through some embedding of $\SL_{n_1}(k) \times \SL_{n_2}(k)$ into $\SL_{n+1}(k)$ for $n_1 + n_2 = n+1$. Then $R \ast G$ can not be endowed with the grading of a $(n+1)$-preprojective algebra.
\end{Theo}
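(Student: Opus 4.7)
The plan is to argue by contradiction. Suppose $R \ast G$ carried the grading of an $(n+1)$-preprojective algebra; then $\Lambda := (R \ast G)_0$ would be $n$-representation infinite, and in particular finite-dimensional. The idea is that the block-diagonal structure of $G$ (with both $n_1, n_2 \geq 1$) produces two $G$-invariant monomials, one from each block, whose degrees are simultaneously forced to be positive (by finite-dimensionality of $\Lambda$) and to sum to $1$ (by Gorenstein parameter $1$), yielding a numerical contradiction.

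Concretely, after reordering variables so that the image of $G$ in $\SL_{n+1}(k)$ lies in block-diagonal form with blocks of sizes $n_1$ and $n_2$, the fact that each block has determinant $1$ translates to
\[ \prod_{i=1}^{n_1} \chi_i = 1 \quad \text{and} \quad \prod_{i=n_1+1}^{n+1} \chi_i = 1 \]
in $\hat{G}$, where $\chi_i$ denotes the character through which $G$ acts on $x_i$. Consequently the monomials $f_1 := x_1 x_2 \cdots x_{n_1}$ and $f_2 := x_{n_1+1} \cdots x_{n+1}$ are nonzero $G$-invariants in $R$. In the McKay quiver underlying $R \ast G$, for every vertex $\chi \in \hat{G}$ the element $f_j e_\chi$ represents a cycle at $\chi$, and the concatenation $f_1 f_2 e_\chi$ is the standard length-$(n+1)$ superpotential cycle $x_1 \cdots x_{n+1} e_\chi$.

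A preprojective grading is an assignment of non-negative integer degrees to the arrows of the McKay quiver that makes the commutator relations $x_i x_j = x_j x_i$ homogeneous and yields Gorenstein parameter $1$. Homogeneity of the commutators ensures that each $\deg(f_j e_\chi)$ is independent of the chosen order of factors and hence well-defined, while Gorenstein parameter $1$ forces the length-$(n+1)$ cycle $f_1 f_2 e_\chi$ to have degree exactly $1$ at every vertex $\chi$. If $\deg(f_j e_\chi) = 0$ for some $j$ and $\chi$, however, then the infinitely many linearly independent elements $f_j^k e_\chi$ all lie in $\Lambda$, contradicting finite-dimensionality of $\Lambda$. Hence $\deg(f_1 e_\chi) \geq 1$ and $\deg(f_2 e_\chi) \geq 1$, forcing $\deg(f_1 e_\chi) + \deg(f_2 e_\chi) \geq 2$ and contradicting the identity $\deg(f_1 e_\chi) + \deg(f_2 e_\chi) = 1$.

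The main technical point to verify carefully is that the degree of a cycle in the McKay quiver really is order-independent, and that Gorenstein parameter $1$ really does pin the degree of every length-$(n+1)$ potential cycle (at every vertex) to be $1$; both follow from the superpotential presentation of $R \ast G$ as a twisted Calabi--Yau algebra and should be invoked explicitly rather than left implicit.
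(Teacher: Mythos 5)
The paper does not actually prove this statement --- it is cited verbatim from Thibault's work --- so there is no in-paper argument to compare against. Your proposal is an independent proof attempt, and the core idea is attractive: the block-diagonal factorisation forces $f_1 = x_1\cdots x_{n_1}$ and $f_2 = x_{n_1+1}\cdots x_{n+1}$ to be $G$-invariant, and the superpotential cycle $f_1 f_2 e_\chi$ having degree exactly $1$ splits incompatibly with the finite-dimensionality of $\Gamma_0$.

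There are, however, two genuine gaps. First, your opening sentence defines a preprojective grading as ``an assignment of non-negative integer degrees to the arrows of the McKay quiver,'' but that is the definition of a \emph{cut}, not of a general higher preprojective grading. The theorem rules out \emph{all} gradings making $R\ast G$ an $(n+1)$-preprojective algebra, and a priori such a grading need not be determined by arrow degrees at all --- in particular the monomials $f_j e_\chi$ need not be homogeneous, which is essential for your numerics $\deg(f_1 e_\chi)+\deg(f_2 e_\chi)=1$. To close this you would need to invoke the reduction to cuts (the paper's citation of \cite[Theorem 5.13]{DramburgSandoy}, valid for basic $R\ast G$, i.e.\ abelian $G$), or else argue directly from positivity of the grading that a nonzero homogeneous component of $f_j\otimes 1$ lies in degree $0$ and then conclude infinite-dimensionality from the domain property of $R^G$. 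Second, your setup --- vertices of the McKay quiver labelled by $\hat G$, characters $\chi_i$, paths $e_\chi$ --- is intrinsically abelian, whereas the cited theorem is for arbitrary finite subgroups $G\leq\SL_{n+1}(k)$. The $G$-invariance of $f_1$ and $f_2$ still holds without the abelian hypothesis (each block acts on the top exterior power of its variables by its determinant, which is $1$), but the quiver bookkeeping would need to be reformulated via indecomposable projectives of the non-basic algebra $R\ast G$. Within the abelian, cut-reduced setting that this paper actually uses, your argument is correct and gives a pleasantly direct proof.
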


However even for abelian groups, there are examples showing that this criterion is not sufficient, see \cite[Example 3.15, Example 6.5]{DramburgGasanova}. To give some constructions of preprojective structures, we need an explicit quiver description of $R \ast G$, and we need to describe how it interacts with preprojective gradings. Before we do so, we need the following simplification of the setup.  

\subsection{Cuts}
Note that for an abelian group $G$, the algebra $R \ast G$ is basic and can therefore be written as the quotient of a quiver algebra, i.e. we have $R \ast G \simeq kQ/I$. To construct a grading, it then often suffices to grade the quiver $Q$, i.e. to place certain arrows in $Q$ in degree $1$ so that $kQ/I$ becomes graded. This method has been employed e.g.\ in \cite{Giovannini, DramburgGasanova, HIO}. However, it is a priori not clear that every possible higher preprojective grading will arise in this way. We therefore make the following definition. 

\begin{Def}
    Let $R \ast G \simeq kQ/I$. A higher preprojective grading on $R \ast G$ is called a \emph{cut} if every arrow in $Q$ is homogeneous of degree $0$ or $1$. For such a cut, we let $C = \{ a \in Q_1 \mid \deg(a) = 1 \} $ and refer to the set $C$ as a cut as well.
\end{Def}

Note that not every higher preprojective grading is a cut. For a detailed discussion of this problem, we refer the reader to \cite[Section 3.1]{DramburgGasanova}. However, it follows from \cite[Theorem 5.13]{DramburgSandoy} that every higher preprojective grading on $R \ast G$ can be mapped to a cut by an automorphism. In particular, this means that every $n$-representation infinite algebra $\Lambda$ with $\Pi_{n+1}(\Lambda) \simeq R \ast G \simeq kQ/I$ can be realised as the degree $0$ part of a cut for a fixed $Q$. Taking the preprojective degree $0$ part of an algebra with a cut amounts to removing, or cutting, all arrows in $C$ from the quiver $Q$ and removing all relations from $I$ which contain arrows in $C$. In order to refer to this procedure, the following notation will be used throughout. 

\begin{Def}\label{Def: Cut quiver}
    Let $C \subseteq Q_1$ be a cut. Then we refer to the cut quiver $Q_C = (Q_0 , Q_1 -C)$ and the cut quiver algebra $(kQ/I)_C = \langle k(Q_C) \rangle \leq kQ/I$. 
\end{Def}

\subsection{Quivers and universal covers}
In this subsection, we give a detailed summary of the quiver description of $R \ast G$ for abelian $G$. This is based on \cite{BSW} and \cite{HIO}. See also \cite[Section 7.1]{DramburgGasanova} for a similar summary. We recall that $G$ acts on $V = \langle x_1, \ldots, x_{n+1} \rangle$ via the embedding of $G$ in $\SL_{n+1}(k)$. In particular, we view $V$ as a representation of $G$. Since $G$ is abelian, we can diagonalise this representation. That means decompose $V = \rho_1 \oplus \cdots \oplus \rho_{n+1}$ into 1-dimensional representations so that the subspace $\langle x_i \rangle$ affords the representation $\rho_i$.  
As proven in \cite{BSW}, the quiver for $R \ast G$ is the \emph{McKay quiver} of $G$. That means $R \ast G \simeq kQ/I$, where $Q = (Q_0, Q_1)$ has vertices $Q_0 = \Irr(G)$ given by the irreducible representations of $G$. For $\chi_1, \chi_2 \in \Irr(G)$, the arrows $\chi_1 \to \chi_2$ are a basis of $\Hom_G(\chi_1, \chi_2 \otimes V)$.  Since $V$ is a faithful representation, it follows that $\{ \rho_1, \ldots, \rho_n \}$ is a generating set for the dual group $\hat{G} = \Hom(G, \mathbb{C}^\ast)$ \cite[Lemma 4.2]{DramburgGasanova}. We arrive at the following observation.

\begin{Rem}
    Let $G$ be abelian. The quiver $Q$ for $R \ast G$ is the directed Cayley graph of $\hat{G}$ with respect to the generating set determined by $V$. In particular, every vertex of $Q$ has $n+1$ outgoing and $n+1$ incoming arrows. 
\end{Rem}

The fact that $Q$ is a Cayley graph also suggests that we associate a type to each arrow, corresponding to the $\rho_i$ it is associated to. 

\begin{Def}
    Let $Q$ be the quiver of $R \ast G$, where $G$ is abelian. Every arrow $a \colon \chi_1 \to \chi_2$ corresponds to an isomorphism $\chi_1 \simeq \chi_2 \otimes \rho_i$ for a unique $1 \leq i \leq n+1$, and we call $\theta(a) = i$ the \emph{type} of the arrow. 
\end{Def}

We recall from \cite{BSW} that the relations $I$ for $R \ast G \simeq kQ/I$ are generated by commutator relations. 

\begin{Rem}\label{Rem: Quiver description of R G}
    The isomorphism $R \ast G \simeq kQ/I$ takes the element $x_i \otimes 1 \in R \ast G$ to the sum of all arrows of type $i$. The commutativity relations $x_i x_j = x_j x_i$ in $R$ give rise to the defining relations of $kQ/I$ as follows. 
    Let $a_i \colon x \to y$ and $a_j \colon y \to z$ be of types $\theta(a_i) = i \neq j = \theta(a_j)$. Then there exist arrows $b_j \colon x \to y'$ and $b_i \colon y' \to z$ of types $\theta(b_j) = j$ and $\theta(b_i) = i$. In $kQ/I$, we have 
    \[ a_i a_j = b_j b_i, \]
    and $I$ is generated by all relations of this form. 
\end{Rem}

Next, we describe a universal cover of $Q$ as detailed in \cite[Section 5]{HIO}. We denote the standard basis of $\mathbb{R}^{n}$ by $\{ \alpha_1, \ldots, \alpha_n \}$ and fix $\alpha_{n+1} = - \sum_{i = 1 }^n \alpha_i$. We consider the $n$-dimensional lattice 
\[ L_0 = \langle \alpha_i \mid 1 \leq i \leq n \rangle \]
and the corresponding infinite quiver 
\[ \hat{Q} = (L_0 , \{ x \to x + \alpha_i \mid 1 \leq i \leq {n+1} \} ).    \]
To cover $Q$, we consider the map 
\[ L_0 \to \hat{G}, \alpha_i \mapsto \rho_i,   \]
where $V = \rho_1 \oplus \cdots \oplus \rho_{n+1}$ is the decomposition of the representation $V$. This is a group homomorphism, which on the level of quivers induces a surjection 
\[ \hat{Q} \to Q, \]
such that the arrows $x \to x + \alpha_i$ in $\hat{Q}$ map to the arrows of type $i$ in $Q$. Completing this into a short exact sequence of abelian groups 
\[ 0 \to L_1 \to L_0 \to \hat{G} \to 0 ,\]
we see that the kernel $L_1$ is itself an $n$-dimensional lattice, so we may think of the quiver $Q$ being embedded on an $n$-torus, while $\hat{Q}$ is the universal cover of this quiver in $\mathbb{R}^n$. The following remark shows that we can fix $L_0$ and then use the data of $L_1$ in place of considering finite subgroups of $\SL_{n+1}(k)$.

\begin{Rem}
    The datum of a faithful $n$-dimensional representation $\rho$ of an abelian group $G$ is equivalent to the datum of a generating set $\{ \rho_1, \ldots, \rho_n \}$ of $\hat{G}$. This way, we obtain an equivalence between abelian subgroups $ G \leq \SL_{n+1}(k)$ and cofinite sublattices $L_1 \leq L_0$. 
\end{Rem}

We now use the cover $\hat{Q}$ to fix important terminology. 

\begin{Def}
    We keep the setup for $\hat{Q}$. 
    \begin{enumerate}
        \item We say that an arrow $x \to x + \alpha_i$ has \emph{type} $i$. 
        \item A cycle of length $n+1$ in $\hat{Q}$ consisting of arrows of $n+1$ distinct types is called an \emph{elementary cycle}. 
        \item We call a set of arrows $\hat{C} \subseteq \hat{Q}_1$ a \emph{cut} if every elementary cycle passes through exactly one arrow in $\hat{C}$.
        \item The same terminology applies to arrows and cycles in $Q$.
        \item We call a cut in $\hat{Q}$ periodic with respect to $L_1$ if it is invariant under $L_1$-translation, that is, $x \to x + \alpha_i$ is in $\hat{C}$ if and only if $(x+y) \to (x+y) + \alpha_i$ is in $\hat{C}$ for all $y \in L_1$. 
    \end{enumerate}
\end{Def}

\begin{Rem}
    The purpose of periodicity is that the periodic cuts are precisely the ones that descend to $Q$. More precisely, let $\hat{C}$ be a cut on $\hat{Q}$, then for every arrow $a \in \hat{C}$, its $L_1$-orbit is contained in $\hat{C}$, and this orbit is naturally identified with a single arrow in $Q$. This clearly gives a bijection between $L_1$-periodic cuts of $\hat{Q}$ and cuts of $Q$, and to alleviate notation we identify these without explicit mention when no confusion is possible. Hence we also refer to the infinite cut quiver $\hat{Q}_C$ in analogy with \Cref{Def: Cut quiver}. 
\end{Rem}

Let us now characterise a higher preprojective cut purely in quiver terms. This has been proven explicitly for $n=2$ in \cite{DramburgGasanova}. See also \cite[Theorem 5.6]{HIO} and \cite[Corollary 4.7]{Giovannini} for parts of this statement. The part of the proof showing finite-dimensionality uses a statement that is most conveniently proven using height functions, which we introduce in \Cref{Sec: Height functions}. The implication using height functions is in fact slightly stronger. We obtain that the cut $Q_C$ quiver is acyclic. In \cite[Question 5.9]{HIO}, the authors ask whether the quiver of an $n$-hereditary algebra is acyclic, so our result confirms this for all algebras of type $\Tilde{A}$, in any global dimension.  

\begin{Pro}\label{Pro: Cut is higher pp iff positive type}
    Let $C \subseteq Q_1$ be a cut. Then declaring $\deg(a) = 1$ for all arrows $a \in C$ defines a higher preprojective structure on $kQ/I$ if and only if $C$ contains arrows of all types. In this case, the cut quiver $Q_C$ is acyclic. 
\end{Pro}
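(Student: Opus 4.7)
The plan is to use \Cref{Theo: HPG is f.d. GP1}, which reduces the higher preprojective property to $R\ast G$ being a bimodule $(n+1)$-Calabi-Yau algebra of Gorenstein parameter $1$ with finite dimensional graded pieces.

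For the ``only if'' direction, suppose that no arrow of type $i$ lies in $C$. By \Cref{Rem: Quiver description of R G}, the generator $x_i\otimes 1\in R\ast G$ equals the sum of all arrows of type $i$, and these all lie in $Q\setminus C$, hence have degree $0$. Therefore $x_i^k\otimes 1=(x_i\otimes 1)^k$ lies in $\Gamma_0$ for every $k\geq 0$, and since these elements are linearly independent in $R\ast G$, we get $\dim_k\Gamma_0=\infty$, contradicting \Cref{Theo: HPG is f.d. GP1}.

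For the converse, assume $C$ contains arrows of all types. I first check that the degree assignment descends to $kQ/I$, which amounts to the commutator relations $a_ia_j=b_jb_i$ of \Cref{Rem: Quiver description of R G} being homogeneous. Lifting to $\hat Q$, I pick any path $p$ of length $n-1$ using one arrow of each remaining type in $\{1,\ldots,n+1\}\setminus\{i,j\}$; such a $p$ goes from the common endpoint of $a_j,b_i$ back to the common source of $a_i,b_j$ because $\sum_{k\neq i,j}\alpha_k=-\alpha_i-\alpha_j$. Then $a_ia_jp$ and $b_jb_ip$ are two elementary cycles, both of total degree $1$ by the cut condition, which forces $\deg(a_i)+\deg(a_j)=\deg(b_j)+\deg(b_i)$. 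The ungraded algebra $R\ast G$ is classically bimodule $(n+1)$-Calabi-Yau, and under the cut grading each elementary cycle has total degree exactly $1$, yielding Gorenstein parameter $1$ (compare \cite[Theorem 5.6]{HIO} and \cite[Corollary 4.7]{Giovannini}). It then remains to show $\dim_k\Gamma_m<\infty$ for every $m$.

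Both this finite dimensionality and the acyclicity of $Q_C$ will follow from the height function machinery of \Cref{Sec: Height functions}. For the reduction, any degree-$m$ path factors as $m$ arrows of $C$ interspersed with subpaths in $Q_C$, so once $Q_C$ is acyclic the total length is bounded and only finitely many paths modulo relations contribute to $\Gamma_m$. The main obstacle is thus the acyclicity: any oriented cycle in $Q_C$ lifts to a degree-$0$ path $\tilde x\to\tilde x+y$ in $\hat Q_C$ for some $y\in L_1\setminus\{0\}$, and I plan to rule this out by attaching a height function to the cut which, under the hypothesis that $C$ contains arrows of all types, exhibits strictly monotone behaviour along nontrivial $L_1$-translations. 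Setting up this height function and verifying the required monotonicity is the technical heart of \Cref{Sec: Height functions}, to which I defer the detailed argument.
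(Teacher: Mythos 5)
Your proof is correct and follows essentially the same route as the paper: reduce to \Cref{Theo: HPG is f.d. GP1}, get Gorenstein parameter $1$ from the cut making every elementary cycle degree $1$, kill the backward direction via the subalgebra $k[x_i\otimes 1]\subseteq\Gamma_0$, and reduce finite-dimensionality to acyclicity of $Q_C$, which is deferred to the height-function lemma. The only cosmetic difference is that you check homogeneity of the commutator relations by hand, whereas the paper gets this automatically by observing that the BSW superpotential $\omega$ is homogeneous of degree $1$ and the relations are its derivatives.
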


\begin{proof}
    We need to check the conditions from \Cref{Theo: HPG is f.d. GP1}. To do this, we use the bimodule resolution of $R \ast G \simeq kQ/I$ given by the superpotential $\omega$ as in \cite{BSW}. In detail, $\omega$ is a linear combination of all elementary cycles in $Q$, and its $i$-th partial derivatives generate the $(n+1-i)$-th term in a projective bimodule resolution of $kQ/I$. Since we assumed that $C$ is a cut, it follows that with the grading induced from $C$, the superpotential $\omega$ becomes homogeneous of degree $1$. Now consider the last term in the projective bimodule resolution, which is generated by the $0$-th derivatives of $\omega$. That means it is generated by $\omega$ itself, and hence in degree $1$ with respect to the induced grading. Therefore, every cut induces a grading of Gorenstein parameter $1$ on $kQ/I$. It remains to consider when the grading is locally finite dimensional. Since $kQ/I$ is generated by $Q$, and $C \subseteq Q_1$ is finite, it clearly suffices to check that the induced degree $0$ part $(kQ/I)_0$ is finite dimensional. If $C$ does not contain arrows of all types, we assume without loss of generality that $C$ contains no arrows of type $1$. Then it follows from \Cref{Rem: Quiver description of R G} that the sum of these arrows corresponds under the isomorphism $kQ/I \simeq R \ast G$ to the element $x_1 \otimes 1$. Clearly, this element generates an infinite dimensional subalgebra isomorphic to $k[x_1] $ in $(kQ/I)_0$, so the cut is not higher preprojective. Now suppose that $C$ does contain arrows of all types. Then it follows from \Cref{Lem: Positive type is acyclic} that the quiver $Q_C$, which generates $(kQ/I)_0$, is acyclic and hence $(kQ/I)_0$ is finite dimensional. 
\end{proof}

Note that the above proposition clearly applies to $L_1$-periodic cuts as well. We therefore now fix a cofinite sublattice $L_1 \leq L_0$ and want to study $L_1$-periodic cuts of $\hat{Q}$. Once $L_1$ is fixed, we also fix the size of the quotient $|L_0/L_1| = |\hat{G}| = m$. Next, we recall the short exact sequence 
\[ 0 \to L_1 \xrightarrow[]{B'} L_0 \to \hat{G} \to 0. \]
Here, we choose $B'$ as a matrix for the embedding, with respect to the fixed basis $\{ \alpha_1, \ldots, \alpha_n  \} $ for $L_0$. Since $B$ is then defined up to $\GL_n(\mathbb{Z})$-right action, that means up to base change of $L_1$, we furthermore assume that $\det(B') = |G|$, and consider $B'$ up to $\SL_{n}(\mathbb{Z})$-right action. We will also refer to the matrix 
\[  B = \left(\begin{array}{ c | c }
    B' & \begin{matrix}
           1 \\
           \vdots \\
         \end{matrix} \\
    \hline
    0 \cdots  & 1
  \end{array}\right).
\]

\begin{Rem}\label{Rem: Positive matrix}
    It will be convenient at times to consider a specific $B'$ which has only non-negative entries. This is possible by computing the Hermite normal form of $B'$. 
\end{Rem}

A crucial tool to study cuts are their types, which we define only for $Q$ but can be defined analogously for $\hat{Q}$ by choosing a fundamental domain for the $L_1$-action.

\begin{Def}
    For a cut $C$ on $Q$, we call 
    \[ \theta(C) = (\# \{ a \in C \mid \theta(a) = i  \})_{1 \leq i \leq n+1} \]
    the \emph{type} of $C$.
\end{Def}

We make some remarks on these types.

\begin{Rem}\label{Rem: Types are on a cone slice}
    Let $C \subseteq Q_1$ be a cut of type $\theta(C) = (\gamma_1, \ldots, \gamma_{n+1})$. 
    \begin{enumerate}
        \item An easy counting argument shows that $\sum_{i = 1}^{n+1} \gamma_i = |L_0/L_1| = |G| = m $. 
        \item From this, it follows that $0 \leq \gamma_i \leq m$ for all $i$.
        \item For every $i \in \{ 1, \ldots, n+1\}$, there is a \emph{trivial cut} obtained by cutting all arrows of type $i$, which has type $(0, \ldots, 0, m, 0, \ldots, 0)$. 
    \end{enumerate}
    It follows that the vectors $\theta(C)$ are integer points on the hyperplane $\sum_{i= 1}^{n+1} x_i = m$, intersected with the cone spanned by trivial type vectors. This connection will be made more precise in \Cref{SSec: Hollow lattice polytopes} and \Cref{Sec: Toric geometry}.
\end{Rem}

\section{Height functions}\label{Sec: Height functions}
In this section, we introduce the main tool to investigate cuts, called the height function. This is inspired by the case $n=2$, where one can associate a height function to a perfect matching on a dimer model \cite{IUDimerSpecial, kenyon2009lecturesdimers}. The same generalisation also appeared in \cite{LammersDimer}. Since there is no dual bipartite graph in the case $n>2$, we instead define the height function directly on the lattice $L_0$. When $n=2$, this recovers a function which is equivalent to the height function for the hexagonal dimer model. As in the setup, we fix a cofinite $L_1 \xhookrightarrow{B'} L_0$. Furthermore, we write $|L_0/L_1| = m$, and fix an $L_1$-periodic cut $C$.

\begin{Def}
    A map $h \colon L_0 \to \mathbb{Z}$ is called a \emph{height function} if it satisfies the following two conditions: 
    \begin{enumerate}
        \item We have $h(0) = 0$.
        \item For any $x \in L_0$ and $\alpha_i \in \{ \alpha_1 , \ldots, \alpha_{n+1} \} $ we have $h(x+ \alpha_i) \in \{ h(x) +1, h(x) -n \}$. 
    \end{enumerate}
    If in addition $h(x + y) = h(x) + h(y)$ for all $x \in L_0$ and $y \in L_1$, we call $h$ an $L_1$-equivariant height function. 
\end{Def}

Next, we want to construct a height function for a (not necessarily $L_1$-periodic) cut $C$. To do this, we first consider the height increment of a path. 

\begin{Def}
    Let $a \in \hat{Q}_1$ be an arrow. We define 
    \[ h_C(a) = \begin{cases}
        1 & a \not \in C, \\
        - n & a \in C.
    \end{cases} \]
    Let $p = a_1 a_2 \cdots a_l $ be a path in $\hat{Q}$. Then we define the height increment along $p$ as
    \[ h_C(p) =  \sum_{i = 1}^l h_C(a_i).  \]
\end{Def}
\begin{Rem}
\label{Rem: strongly connected}
For any pair of points $x,y\in L_0$ there is a path from $x$ to $y$ in $\hat{Q}$. In other words, $\hat{Q}$ is strongly connected. This follows from the more general observation that $Q$ as an undirected graph $\hat{Q}$ is connected, and that any arrow can be completed into a cycle. Choose an undirected path from $x$ to $y$. If this undirected path contains a reversed arrow, we can simply replace this arrow by a directed path that completes the arrow into a cycle.    
\end{Rem}
In the next proposition, we show that the height increment of any closed path is $0$. We do this by showing that we may deform a path by commuting arrows of different types or contracting elementary cycles without affecting its height increment. As a consequence, we will obtain that the height increments of any two parallel paths is the same.
\begin{Pro}
\label{Pro: Closed paths}
Let $c \colon  x\to \ldots \to x$ be a cycle in $\hat Q$. Then $h_C(c)=0$.    
\end{Pro}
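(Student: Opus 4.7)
The plan is to prove $h_C(c) = 0$ by induction on the length of $c$, using two moves that preserve the height increment to reduce $c$ to a concatenation of elementary cycles (each of which has $h_C = 0$ by direct calculation). The moves are: (i) the \emph{commutation move}, which replaces an adjacent subpath $a_i a_j$ of types $i \neq j$ by the parallel subpath $b_j b_i$ of types $j, i$ guaranteed by \Cref{Rem: Quiver description of R G}; and (ii) \emph{elementary subcycle removal}, which drops an elementary cycle appearing as a closed subpath of $c$.

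I would first check both moves preserve $h_C$. An elementary cycle has $n+1$ arrows of pairwise distinct types, exactly one of which lies in $C$ by the cut property, so its height increment is $n\cdot 1 + 1\cdot(-n) = 0$, settling (ii). For (i), both $a_i a_j$ and $b_j b_i$ go from $x$ to $z := x + \alpha_i + \alpha_j$; I extend them by a common tail $\tau$ from $z$ back to $x$ consisting of one arrow of each remaining type in $\{1,\ldots,n+1\}\setminus\{i,j\}$. Such a $\tau$ exists because $-\alpha_i-\alpha_j = \sum_{k \notin \{i,j\}} \alpha_k$, by virtue of $\sum_{k=1}^{n+1}\alpha_k = 0$. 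This produces two elementary cycles sharing $\tau$, and the cut property forces $|\{a_i,a_j\}\cap C| = |\{b_j,b_i\}\cap C|$. Hence the commutation preserves both length and cut-intersection, so it preserves $h_C$.

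For the induction, a counting argument using $\sum_i \alpha_{\theta(a_i)} = 0$ in $L_0$ together with the relation $\alpha_{n+1} = -\sum_{i=1}^n \alpha_i$ shows that each type occurs exactly $k$ times in $c$, so the length is $(n+1)k$; the case $k=0$ is immediate. For $k \geq 1$, I rearrange $c$ by commutations so that the first $n+1$ arrows have types $1, 2, \ldots, n+1$ in order: for $i = 1, \ldots, n+1$ in turn, I locate the leftmost type-$i$ arrow among positions $\geq i$ and bubble it left past arrows of other types (which are necessarily $\neq i$), leaving positions $1, \ldots, i-1$ untouched. The resulting initial segment is an elementary subcycle based at $x$ (it closes because $\sum_i \alpha_i = 0$); removing it leaves a directed cycle of length $(n+1)(k-1)$ at $x$, whose height increment is $0$ by induction. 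The delicate point is the verification that the commutation move preserves $h_C$, which rests on the tail completion and the cut axiom; once this is in place, the bubble-sort rearrangement and the induction are routine.
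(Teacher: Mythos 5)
Your proof follows the same strategy as the paper: elementary cycles have zero height increment by the cut axiom, commutations preserve $h_C$ by completing the two parallel length-$2$ paths into elementary cycles sharing a common tail, a counting argument in $L_0$ shows each type occurs equally often, and $c$ is deformed to a power of an elementary cycle. You simply spell out the final deformation step more explicitly (bubble-sort and induction) than the paper, which asserts directly that commutations reduce $c$ to $c^{t_1}$ for an elementary cycle $c$.
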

\begin{proof}
First, note that any elementary cycle $c$ has length $n+1$, and exactly one of its arrows is in $C$, so we have $h_C(c) = 0$. Next, let $p_1=a_ia_j$ and $p_2=a_ja_i$ be two paths from $x$ to $x+\alpha_i+\alpha_j$. Then $h_C(p_1)=h_C(p_2)$. Indeed, this is obvious for $i=j$, and otherwise $p_1$ and $p_2$ can be completed into elementary cycles by the same path $p_3$, that is, there exists a path $p_3$ such that $p_1p_3$ and $p_2p_3$ are elementary cycles. But that means 
    \[ h_C(p_1) + h_C(p_3) =  h_C(p_1p_3) = 0 = h_C(p_2p_3) = h_C(p_2) + h_C(p_3),   \]
and hence $h_C(p_1) = h_C(p_2)$. This means that deforming a path using commutativity relations $a_ia_j=a_ja_i$ does not change the height increment.
Finally, let $c$ be any cycle $x\to \ldots \to x$ and let $t_i$ denote the number of arrows of type $i$ in $c$ for all $i \in \{1,\ldots, n+1 \}$. Then 
 $$
 0=\sum_{i=1}^{n+1}t_i\alpha_i=\sum_{i=1}^{n}t_i\alpha_i+t_{n+1}\left(-\sum_{i=1}^{n}\alpha_i\right)=\sum_{i=1}^{n}(t_i-t_{n+1})\alpha_i.
 $$
 from which we conclude that $t_1=t_2=\ldots=t_{n+1}$. Using commutativity relations, this path can therefore be deformed into the path $c^{t_1}$, where $c$ is an elementary cycle. Thus, $c$ and its powers give a height increment $0$.   
\end{proof}
\begin{Cor}

\label{Pro: Parallel paths have same height increment}
    Let $p$, $q$ be paths from $x$ to $ y$ in $\hat{Q}$. Then 
    \[ h_C(p) = h_C(q). \]
\end{Cor}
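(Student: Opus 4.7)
The plan is to reduce this to the previous proposition by closing up the two paths into cycles with a common tail. Concretely, since $\hat{Q}$ is strongly connected by \Cref{Rem: strongly connected}, I would pick any path $r \colon y \to x$. Then the concatenations $pr$ and $qr$ are both cycles based at $x$, so \Cref{Pro: Closed paths} gives
\[ h_C(pr) = 0 = h_C(qr). \]
By additivity of the height increment under concatenation (which is immediate from its definition as a sum over arrows), this expands to
\[ h_C(p) + h_C(r) = 0 = h_C(q) + h_C(r), \]
and subtracting $h_C(r)$ from both sides yields $h_C(p) = h_C(q)$.

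The only thing that needs care is the existence of the auxiliary path $r \colon y \to x$, which is exactly the content of \Cref{Rem: strongly connected}, and the additivity of $h_C$ on concatenations, which is built into the definition. So there is no real obstacle: the whole argument is a two-line consequence of the preceding proposition, and I expect the write-up to be essentially just these observations.
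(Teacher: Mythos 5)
Your proof is correct and is essentially identical to the paper's: both pick a return path $r \colon y \to x$ via \Cref{Rem: strongly connected}, close $p$ and $q$ into cycles $pr$ and $qr$, apply \Cref{Pro: Closed paths}, and cancel $h_C(r)$ using additivity of the height increment.
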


\begin{proof}

Let $r \colon  y\to \ldots \to x$ be any path in $\hat{Q}$, which we know exists by \Cref{Rem: strongly connected}. Then both $pr$ and $qr$ are cycles $x\to \ldots \to x$. Then
 $$h_C(p)+h_C(r)=h_C(pr)=0= h_C(qr)=h_C(q)+h_C(r),$$
 which implies $h_C(p)=h_C(q)$.
\end{proof}

This allows us to make the following construction. 

\begin{Pro}
\label{Pro: bijection cuts height functions}
     Let $C$ be a cut, and for $x \in L_0$ denote by $p_x$ any path from $0$ to $x$  in $\hat{Q}$.  Then the map
    \[ h_C \colon L_0 \to \mathbb{Z},\  h_C(x) = h_C(p_x)   \]
    is a height function. Furthermore, the map $C \mapsto h_C$ is a bijection between the set of cuts and the set of height functions, which restricts to a bijection between $L_1$-periodic cuts and $L_1$-equivariant height functions. Its inverse is given by 
    \[ h \mapsto C_h = \{ (x \to x + \alpha_i) \in \hat{Q}_1 \mid h(x + \alpha_i) = h(x) - n \}.  \]
\end{Pro}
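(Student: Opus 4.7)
The plan is to verify the statement in four steps, leveraging the results on closed and parallel paths already established.

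First I would check that $h_C$ is well-defined. For any $x \in L_0$, \Cref{Rem: strongly connected} guarantees that a path $p_x$ from $0$ to $x$ exists, and \Cref{Pro: Parallel paths have same height increment} ensures that the value $h_C(p_x)$ is independent of the chosen path. Then $h_C(0) = 0$ because we may take the empty path at the origin, and for any $x$ and any $i$, picking the path $p_{x + \alpha_i} = p_x \cdot a$ where $a$ is the arrow $x \to x + \alpha_i$ yields $h_C(x + \alpha_i) - h_C(x) = h_C(a) \in \{1, -n\}$. Thus $h_C$ is indeed a height function.

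Next I would show that $h \mapsto C_h$ is a two-sided inverse. Given a height function $h$, I need to see that $C_h$ is a cut, i.e., that every elementary cycle contains exactly one arrow in $C_h$. If $c$ is an elementary cycle of length $n+1$, then the sum of the increments $h(y) - h(x)$ along the arrows $x \to y$ of $c$ telescopes to $0$. Each increment is either $+1$ (if the arrow is not in $C_h$) or $-n$ (if it is). Writing $k$ for the number of arrows of $c$ lying in $C_h$, we get
\[
0 = (n+1-k) \cdot 1 + k \cdot (-n) = (n+1)(1 - k),
\]
so $k = 1$. Hence $C_h$ is a cut. The identity $C_{h_C} = C$ is immediate from the definitions, since an arrow $a \colon x \to x + \alpha_i$ lies in $C$ exactly when $h_C(a) = -n$, and $h_{C_h} = h$ follows by induction along a path from $0$, since both functions agree at $0$ and have the same increments along any arrow.

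Finally I would handle the equivariance compatibility. Assume $C$ is $L_1$-periodic and fix $y \in L_1$. For $x \in L_0$, concatenate a path $p_y$ from $0$ to $y$ with the translate of a path $p_x$ from $0$ to $x$ to obtain a path from $0$ to $x + y$; periodicity of $C$ means the translated path has the same height increment as $p_x$, giving $h_C(x+y) = h_C(y) + h_C(x)$. Conversely, if $h_C$ is $L_1$-equivariant, then for any arrow $a \colon x \to x + \alpha_i$ and any $y \in L_1$ the translate $a' \colon x+y \to x+y+\alpha_i$ satisfies
\[
h_C(a') = h_C(x+y+\alpha_i) - h_C(x+y) = h_C(x+\alpha_i) - h_C(x) = h_C(a),
\]
so $a \in C$ iff $a' \in C$, i.e., $C$ is $L_1$-periodic. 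This gives the claimed restricted bijection.

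The main obstacle is purely conceptual rather than technical: one must verify that the local condition $h(x+\alpha_i) - h(x) \in \{1,-n\}$ in the definition of a height function is strong enough to force the global cut property (that exactly one arrow per elementary cycle lies in $C_h$). The closed-path argument above, together with the counting identity $(n+1)(1-k)=0$, is the core of the proof, and everything else is bookkeeping built on \Cref{Pro: Closed paths} and \Cref{Pro: Parallel paths have same height increment}.
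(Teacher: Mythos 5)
Your proposal is correct and follows essentially the same approach as the paper, relying on \Cref{Rem: strongly connected} and \Cref{Pro: Parallel paths have same height increment} for well-definedness, the telescoping/counting argument on elementary cycles to show $C_h$ is a cut, and direct verification of the inverse and of the periodicity/equivariance correspondence. You simply spell out several steps that the paper asserts as immediate (in particular the counting identity $(n+1)(1-k)=0$ and the two-sided-inverse check), which is fine.
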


\begin{proof}
    It follows from \Cref{Pro: Parallel paths have same height increment} that $h_C$ is well defined, since any two paths $p$, $q$ from $0$ to $x$ have the same height increment. Next, recall that for every arrow $a \in \hat{Q}$, we have $h_C(a) \in \{ 1, -n \}$, and hence $h_C(x + \alpha_i) \in \{ h_C(x) +1, h_C(x) -n \} $ for any $x \in L_0$ and any $i \in \{ 1,\ldots, n+1 \}$. Since any path from $0$ to $0$ is a cycle, it follows that $h_C(0) = 0$, so $h_C$ is indeed a height function.
    To prove the bijection, consider for any height function $h$ the set 
    \[ C_h = \{ (x \to x + \alpha_i) \in \hat{Q}_1 \mid h(x + \alpha_i) = h(x) - n \}. \]
    It follows that for every elementary cycle there is exactly one arrow in the cycle so that $h$ decreases along this arrow, and hence $C_h$ is a cut. Thus, $C \mapsto h_C$ and $h \mapsto C_h$ are mututally inverse bijections between the set of all cuts and all height functions, which clearly restrict to bijections between periodic cuts and equivariant height functions. 
\end{proof}

Just as with cuts, we will often drop the adjective $L_1$-equivariant for a height function, since we will only consider those which are $L_1$-equivariant. 

\begin{Rem}
    Note that a height function $h$ does not descend to a function on $L_0/L_1$, but equivariance implies that a height function $h$ restricts to a homomorphism $h \colon L_1 \to \mathbb{Z}$. 
\end{Rem}

Next, we aim to compute the values of the homomorphism $h_C$ on $L_1$ explicitly, based on the type of $C$. For the computation, we fix the type $\theta(C) = \gamma=(\gamma_i)_{1 \leq i \leq n+1}$. Due to the discrepancy between the number of entries in $\gamma$ and the dimension of $L_0$, the following notation will be convenient throughout.

\begin{Def} \leavevmode
    \begin{enumerate}
        \item We denote by $\mathbf{1} = (1, \ldots , 1)$ the vector consisting of all $1$. Its length will be clear from context. 
        \item For row or column vectors $x = (x_1, \ldots, x_{l_1} )$ and $y = (y_1, \ldots, y_{l_2})$, where $l_1, l_2 \in \{ n, n+1 \}$, we write 
    \[  \langle x, y \rangle_{n} = \sum_{i = 1}^{n} x_i y_i. \]
    If $l_1 = l_2 = n+1$, we also write 
    \[\langle x, y \rangle_{n+1} = \sum_{i = 1}^{n+1} x_i y_i. \]
    \end{enumerate}
\end{Def}

\begin{Pro} \label{Pro: Height function values on L1}
    For any $y \in L_1$ and $L_1$-periodic cut $C$ of type $\gamma=(\gamma_1,\ldots, \gamma_{n+1})$ we have
    \[ h_C(y) = \left\langle y,\textbf{1}-\frac{n+1}{m}\gamma\right\rangle_{n}.\]
 
\end{Pro}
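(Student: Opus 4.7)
The plan is to compute $h_C(y)$ by an averaging argument over starting vertices of closed walks in $Q$, rather than by tracking a single path in $\hat{Q}$. Fix $y \in L_1$ with $y = \sum_{i=1}^n y_i \alpha_i$, and choose integers $t_1, \ldots, t_{n+1} \geq 0$ satisfying $t_i - t_{n+1} = y_i$ for $i \leq n$; this is possible by taking $t_{n+1} \geq \max(0, -y_1, \ldots, -y_n)$, and then $\sum_{i=1}^{n+1} t_i \alpha_i = y$. Fix any sequence $(i_1, \ldots, i_L)$ of arrow types with $t_j$ occurrences of $j$, where $L = \sum_i t_i$. For each $v \in \hat{G}$, this sequence determines a closed walk $w_v$ in $Q$ starting at $v$, and $w_v$ lifts to a path in $\hat{Q}$ from any chosen lift $\tilde{v} \in L_0$ of $v$ to $\tilde{v} + y$. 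By $L_1$-equivariance of $h_C$, the height increment along this lifted path equals $h_C(\tilde v + y) - h_C(\tilde v) = h_C(y)$, so in particular it is independent of $v$. Writing the increment as $L - (n+1)C_v$, where $C_v$ is the number of cut arrows traversed by $w_v$, we conclude that $C_v$ takes a common value $C$ for every $v$.

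The key computation is to evaluate $C$ by summing $C_v$ over $v \in \hat{G}$. On one hand the sum equals $mC$. On the other hand, at step $k$ of $w_v$ one traverses the type-$i_k$ arrow with source $v + \sum_{j<k}\rho_{i_j}$. As $v$ ranges over $\hat{G}$, this source ranges over all of $\hat{G}$, so each of the $m$ type-$i_k$ arrows in $Q$ is traversed exactly once at step $k$ across the various starting vertices. Summing over the $t_i$ steps of type $i$, every type-$i$ arrow of $Q$ is traversed $t_i$ times in total, so each type-$i$ cut arrow contributes $t_i$ to $\sum_v C_v$. Hence $mC = \sum_{i=1}^{n+1} t_i \gamma_i$, which yields
\[ h_C(y) = L - (n+1)C = \sum_{i=1}^{n+1} t_i\left(1 - \frac{n+1}{m}\gamma_i\right). \]

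It remains to rewrite this in terms of $y$. Setting $c_i = 1 - \frac{n+1}{m}\gamma_i$, the identity $\sum_{i=1}^{n+1} \gamma_i = m$ gives $\sum_{i=1}^{n+1} c_i = 0$, and substituting $t_i = y_i + t_{n+1}$ for $i \leq n$ produces
\[ \sum_{i=1}^{n+1} t_i c_i = \sum_{i=1}^n y_i c_i + t_{n+1}\sum_{i=1}^{n+1}c_i = \sum_{i=1}^n y_i c_i = \left\langle y, \textbf{1} - \frac{n+1}{m}\gamma \right\rangle_n, \]
confirming both the target formula and the fact that the auxiliary choice $t_{n+1}$ drops out. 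The main obstacle is identifying the correct averaging trick: a direct analysis of a single path from $0$ to $y$ does not suffice, because the number of cut arrows encountered on a given path depends on the detailed placement of the cut, not only on $\gamma$. The averaging succeeds because $L_1$-equivariance forces the per-walk count to be independent of the starting vertex, while the Cayley-graph structure of $Q$ makes the sum of the walks cover each type-$i$ arrow uniformly $t_i$ times.
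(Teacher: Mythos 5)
Your proof is correct and takes a genuinely different route from the paper's. The paper proceeds in two stages: it first computes $h_C(o_i\alpha_i)$ where $o_i$ is the order of $\alpha_i$ in $L_0/L_1$, using a single-type path $0 \to \alpha_i \to \cdots \to o_i\alpha_i$ and an averaging argument over the $\frac{m}{o_i}$ cosets of $\langle\alpha_i\rangle$ to show that the number of cut arrows on each such path must be $\theta_i' = \frac{o_i\gamma_i}{m}$; it then extends to general $y \in L_1$ by computing $h_C(my)$ in two ways and appealing to additivity. You instead handle general $y$ directly: you pick a representative $(t_1,\ldots,t_{n+1})$ with $t_i - t_{n+1} = y_i$, build a closed walk in $Q$ realising this type vector, and average over all $m$ starting vertices at once. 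The key observation that $L_1$-equivariance forces the cut-arrow count per walk to be independent of the starting vertex, combined with the Cayley-graph fact that at each fixed step the $m$ translated walks cover all $m$ arrows of the corresponding type exactly once, replaces both the coset cover and the $my$ reduction in the paper. Your approach is a clean one-shot argument and makes the role of the type vector transparent via the identity $\sum t_i\gamma_i = m\cdot C$; the paper's two-step version is slightly more elementary per step but requires the $h_C(my) = m\,h_C(y)$ reduction to conclude. Both are correct and rest on the same two pillars — equivariance forcing constancy, and a uniform covering of each type class of arrows — so the difference is in the choice of covering family, not in the underlying mechanism.
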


\begin{proof}
    Let $o_i$ denote the order of $\alpha_i$ in $L_0/L_1$. First we prove the claim for the points $o_i \alpha_i \in L_1$.
    To compute $h_C(o_i \alpha_i)$, consider the path $0 \to \alpha_i \to 2\alpha_i \to \cdots \to o_i \alpha_i$, where each arrow is of type $i$. Then $h_C(o_i \alpha_i) = o_i - (n+1) \theta_i'$, where $\theta_i' = \# \{ j \in [0, o_i) \mid ( j \alpha_i \to (j+1) \alpha_i ) \in C \}  $ is the number of arrows in this path which lie in $C$. Next, consider any path $x \to x+ \alpha_i \to \ldots \to x + o_i \alpha_i$. Since $o_i\alpha_i\in L_1$, equivariance implies 
    \[h_C(x + o_i \alpha_i) - h_C(x) = h_C(o_i\alpha_i)=o_i - (n+1) \theta_i'. \]
    Equivalently, this means that the number of arrows in $C$ in this path is also 
    \[ \theta_i' =  \frac{1}{n+1}(o_i - h_C(o_i \alpha_i)).   \]
    To cover all arrows of type $i$ in $Q$ exactly once by these paths, we let $\{ x_1, \ldots, x_{\frac{m}{o_i}} \} $ be a system of representatives of the cosets of $\langle \alpha_i \rangle$ in $L_0/L_1$. This means that each arrow of type $i$ in $Q$ lies in exactly one cycle 
    \[ x_j + L_1 \to x_j + \alpha_i + L_1 \to \cdots \to x_j + o_i\alpha_i + L_1,  \]
    and hence we have that 
    \[ \gamma_i = \frac{m}{o_i} \cdot \theta_i' = \frac{m}{o_i} \frac{1}{n+1}(o_i - h_C(o_i \alpha_i)),  \]
    which gives 
    \[ h_C(o_i \alpha_i) = o_i - \frac{n+1}{m} \gamma_i o_i=o_i\left(1-\frac{n+1}{m}\gamma_i\right),   \]
    which is the desired statement, given that $o_i\alpha_i=(0,\ldots,0, o_i,0,\ldots,0)^\top$. Now for an arbitrary $y=(y_1,\ldots,y_n)^\top \in L_1$ we compute $h_C(my)$ in two different ways. On the one hand we have 
    $$
    h_C(my)=h_C\left(\sum_{i=1}^{n}{my_i\alpha_i}\right)=h_C\left(\sum_{i=1}^{n}{y_i\cdot\frac{m}{o_i}\cdot(o_i\alpha_i)}\right)=\sum_{i=1}^{n}{y_i\cdot\frac{m}{o_i}\cdot h_C(o_i\alpha_i)},
    $$
    since $o_i\alpha_i\in L_1$ and since $\frac{m}{o_i}\in \mathbb{Z}$ for all $i$. Since we already established the height values of $o_ia_i$, the above equality becomes
    \begin{align*}
       h_C(my)&=\sum_{i=1}^{n}{y_i\cdot\frac{m}{o_i}\cdot o_i\left(1-\frac{n+1}{m}\gamma_i\right)}\\&=m\sum_{i=1}^{n}{y_i\cdot\left(1-\frac{n+1}{m}\gamma_i\right)}=m\left\langle y, \textbf{1}-\frac{n+1}{m}\gamma\right\rangle_n 
    \end{align*}

    On the other hand, $h_C(my)=mh_C(y)$ since $y\in L_1$. This completes the proof.
\end{proof}

The homomorphism from the above proposition will be useful later on, so we fix notation for it here. 
 
\begin{Def}
    For a type $\gamma$ we write 
    \[ h_\gamma \colon L_1 \to \mathbb{Z}, y \mapsto h_C(y) \]
    where $C$ is an arbitrary cut of type $\gamma$.
\end{Def}

The formula we give above for $h_\gamma$ is invertible, so we can use $h_\gamma$ or the type $\gamma$ interchangeably, as the following proposition shows. 

\begin{Pro} \label{Pro: Homs equal iff types equal}
    Let $\gamma$ and $\delta$ be types of $L_1$-periodic cuts. Then $h_\gamma = h_\delta$ if and only if $\gamma = \delta$.
\end{Pro}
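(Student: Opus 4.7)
The plan is to use the explicit formula for $h_\gamma$ from \Cref{Pro: Height function values on L1}, which gives
\[ h_\gamma(y) = \left\langle y, \mathbf{1} - \tfrac{n+1}{m}\gamma \right\rangle_n = \sum_{i=1}^n y_i\left(1 - \tfrac{n+1}{m}\gamma_i\right). \]
The reverse implication is immediate from this formula. For the forward implication, the assumption $h_\gamma = h_\delta$ becomes
\[ \sum_{i=1}^n y_i (\gamma_i - \delta_i) = 0 \quad \text{for all } y \in L_1. \]

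The main step is then to argue that since $L_1 \leq L_0 = \mathbb{Z}^n$ is cofinite, $L_1 \otimes_\mathbb{Z} \mathbb{Q} = \mathbb{Q}^n$, so $L_1$ contains a $\mathbb{Q}$-basis of $\mathbb{Q}^n$. Thus the $\mathbb{Q}$-linear functional on $\mathbb{Q}^n$ given by $(y_1, \ldots, y_n) \mapsto \sum_{i=1}^n y_i(\gamma_i - \delta_i)$ vanishes on a basis, hence identically. This forces $\gamma_i = \delta_i$ for $i = 1, \ldots, n$.

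Finally, to deduce equality in the $(n+1)$-st coordinate, I would invoke \Cref{Rem: Types are on a cone slice}(1), namely that $\sum_{i=1}^{n+1}\gamma_i = m = \sum_{i=1}^{n+1}\delta_i$, which together with equality in the first $n$ coordinates gives $\gamma_{n+1} = \delta_{n+1}$.

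I do not anticipate a genuine obstacle here; the proof is essentially a direct computation combined with the observation that cofiniteness of $L_1 \leq L_0$ kills any nontrivial $\mathbb{Q}$-linear relation. The only subtlety worth noting explicitly is the asymmetry between $\gamma \in \mathbb{Z}^{n+1}$ and $y \in L_1 \subseteq L_0 \cong \mathbb{Z}^n$, which is why the constraint $\sum \gamma_i = m$ is needed to recover the last coordinate.
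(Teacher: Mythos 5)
Your proof is correct and follows essentially the same approach as the paper: both apply the formula from \Cref{Pro: Height function values on L1} to reduce to $\langle y, \gamma - \delta \rangle_n = 0$ for all $y \in L_1$, and both then use $\langle \gamma, \mathbf{1}\rangle_{n+1} = m = \langle \delta, \mathbf{1}\rangle_{n+1}$ to recover the last coordinate. The only cosmetic difference is that the paper instantiates your cofiniteness argument concretely by substituting $y = m\alpha_i \in L_1$, rather than invoking that $L_1$ spans $\mathbb{Q}^n$.
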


\begin{proof}
    One implication is obvious. For the other, assume that $h_\delta(y) = h_\gamma(y)$ for all $y\in L_1$. Therefore, from \Cref{Pro: Height function values on L1} we get 
    \[ \langle y,\textbf{1}\rangle_n-(n+1)\frac{\langle y, \gamma\rangle_n}{m}=h_\gamma(y)=h_\delta(y)=\langle y,\textbf{1}\rangle_n-(n+1)\frac{\langle y, \delta\rangle_n}{m}, \]
 which implies $\langle y, \gamma-\delta\rangle_n=0$ for all $y\in L_1$. Setting $y=m\alpha_i\in L_1$ for all $i \in  \{ 1,\ldots, n \}$, we obtain $\gamma_i=\delta_i$ for all $i \in \{1,\ldots, n \}$. Since
 $\langle \gamma,\textbf{1}\rangle_{n+1}=m=\langle\delta,\textbf{1}\rangle_{n+1}$, we also get $\gamma_{n+1}=\delta_{n+1}$.
\end{proof}

The following reformulation of \Cref{Pro: bijection cuts height functions},\Cref{Pro: Height function values on L1} and \Cref{Pro: Homs equal iff types equal} will be useful in \Cref{Sec: Mutation lattices}

\begin{Cor}\label{Cor: type from L1 height}
    Let $h$ be an $L_1$-equivariant height function. Then $h(y)=\left\langle y,\textbf{1}-\frac{n+1}{m}\gamma\right\rangle_{n}$ for all $y \in L_1$ if and only if $C_h$ has type $\gamma$.
\end{Cor}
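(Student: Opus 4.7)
The plan is to deduce this corollary directly from the three preceding results: the bijection between cuts and height functions (\Cref{Pro: bijection cuts height functions}), the explicit formula for the restriction of $h_C$ to $L_1$ in terms of the type (\Cref{Pro: Height function values on L1}), and the injectivity of $\gamma \mapsto h_\gamma$ (\Cref{Pro: Homs equal iff types equal}). The statement is a biconditional, so I will handle the two directions essentially simultaneously once the setup is fixed.

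First I would apply \Cref{Pro: bijection cuts height functions} to pass from the given $L_1$-equivariant height function $h$ to its associated $L_1$-periodic cut $C_h$, noting that the bijection guarantees $h = h_{C_h}$. Let $\delta$ denote the type of $C_h$. Then \Cref{Pro: Height function values on L1} computes the restriction of $h_{C_h}$ to $L_1$ as
\[ h(y) \;=\; h_{C_h}(y) \;=\; \left\langle y,\, \mathbf{1} - \tfrac{n+1}{m}\delta \right\rangle_n \qquad \text{for all } y \in L_1, \]
so $h$ restricted to $L_1$ coincides with the homomorphism $h_\delta$.

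For the backward implication, if $C_h$ has type $\gamma$ then $\delta = \gamma$, and the formula above is exactly the desired identity. For the forward implication, assume that $h(y) = \langle y, \mathbf{1} - \tfrac{n+1}{m}\gamma\rangle_n$ for all $y \in L_1$; combining this with the identity above yields $h_\gamma = h_\delta$ as homomorphisms $L_1 \to \mathbb{Z}$. \Cref{Pro: Homs equal iff types equal} then forces $\gamma = \delta$, i.e. $C_h$ has type $\gamma$.

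There is no real obstacle here; the corollary is a bookkeeping statement whose content is already distributed across the three preceding results, and the only thing to verify is that the bijection $h \leftrightarrow C_h$ is compatible with reading off the type. Since periodicity of $C_h$ matches equivariance of $h$ under the bijection, the notion of "type of $C_h$" is well defined, and the argument above goes through without further work.
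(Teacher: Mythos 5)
Your proof is correct and takes exactly the route the paper intends: the paper presents the corollary explicitly as a ``reformulation'' of \Cref{Pro: bijection cuts height functions}, \Cref{Pro: Height function values on L1} and \Cref{Pro: Homs equal iff types equal}, and you assemble those three in the straightforward way, using the injectivity result for the forward direction. Nothing to add.
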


We are ready to show the necessity of the condition from \Cref{Theo: Divisibility conditions}.

\begin{Pro} \label{Pro: Divisibility conditions are necessary}
    Let $C$ be an $L_1$-periodic cut on $\hat{Q}$ of type $\gamma$. Then we have 
    \[ \gamma \cdot B \in m \mathbb{Z}^{1 \times (n+1)} \]
\end{Pro}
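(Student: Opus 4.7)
The plan is to analyze the components of $\gamma B$ separately. The $(n+1)$-st component equals $\sum_{i=1}^{n+1} \gamma_i = m$ by \Cref{Rem: Types are on a cone slice}, so it is automatically divisible by $m$. For $j \leq n$, the $j$-th component is $(\gamma B)_j = \sum_{i=1}^{n} \gamma_i B'_{ij} = \langle b_j, \gamma\rangle_n$, where $b_j$ denotes the $j$-th column of $B'$, viewed as an element of $L_1$. Since these columns form a $\mathbb{Z}$-basis of $L_1$, it suffices to establish that $m \mid \langle y, \gamma\rangle_n$ for every $y \in L_1$.

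The key idea is to extract two independent pieces of information about $h_C$ evaluated on $L_1$. First, since $h_C(a) \in \{1, -n\}$ for every arrow $a$, any path $p$ from $0$ to $y$ of length $l$ satisfies $h_C(p) = l - (n+1)|p \cap C|$, and therefore $h_C(y) \equiv l \pmod{n+1}$. To compute $l \bmod (n+1)$, I would write $y = \sum_{i=1}^{n} y_i \alpha_i$ and, using $\alpha_{n+1} = -\sum_{i=1}^{n} \alpha_i$, represent the same point as $y = \sum_{i=1}^{n+1} t_i \alpha_i$ with $t_i = y_i + k$ for $i \leq n$ and $t_{n+1} = k$, choosing $k$ large enough that all $t_i$ are non-negative. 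The corresponding path has length $\sum_{i=1}^n y_i + (n+1)k$, yielding
\[ h_C(y) \equiv \textstyle\sum_{i=1}^{n} y_i \pmod{n+1}. \]
Second, \Cref{Pro: Height function values on L1} provides the explicit value
\[ h_C(y) = \langle y, \mathbf{1}\rangle_n - \tfrac{n+1}{m}\langle y, \gamma\rangle_n. \]

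Comparing these two expressions, the integer $\tfrac{n+1}{m}\langle y, \gamma\rangle_n$ must be divisible by $n+1$, which forces $\tfrac{1}{m}\langle y, \gamma\rangle_n \in \mathbb{Z}$, i.e., $m \mid \langle y, \gamma\rangle_n$. Applying this to the basis vectors $b_j$ gives $m \mid (\gamma B)_j$ for $j = 1, \ldots, n$, which together with the computation of the last coordinate yields $\gamma B \in m\mathbb{Z}^{1\times(n+1)}$. The subtle point is that \Cref{Pro: Height function values on L1} by itself only delivers $m \mid (n+1)\langle y, \gamma\rangle_n$, which is genuinely weaker when $\gcd(n+1,m) > 1$; the path-length congruence modulo $n+1$ is precisely what is needed to strip off the factor of $n+1$.
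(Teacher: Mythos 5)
Your argument is correct and is essentially the paper's: both compute $h_C(y)$ for $y\in L_1$ twice — once via a path in $\hat{Q}$, using that $h_C$ changes by $1$ or $-n$ on each arrow, and once via the explicit formula from \Cref{Pro: Height function values on L1} — and then compare the two expressions. The only cosmetic difference is that you argue modulo $n+1$ for arbitrary $y\in L_1$, whereas the paper first normalizes $B'$ to have non-negative entries via \Cref{Rem: Positive matrix}, takes the straight path of length $\langle y,\mathbf{1}\rangle_n$, and reads off the exact identity $\langle y,\gamma\rangle_n = m\varepsilon$ with $\varepsilon$ the number of cut arrows on that path; your mod-$(n+1)$ phrasing simply sidesteps that normalization.
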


\begin{proof}
    Write $\gamma =(\gamma_1,\ldots,\gamma_{n+1})$. It suffices to show that 
    \[ (\gamma_1, \ldots, \gamma_n) y \in m \mathbb{Z}^{1 \times n} \]
    for all columns $y$ of $B'$. Recall from \Cref{Rem: Positive matrix} that we may choose $B'$ with non-negative entries. It suffices to show the claim for this choice of $B'$ since clearly the $\SL_n(\mathbb{Z})$-action leaves invariant the sublattice $m\mathbb{Z}^{1\times n}$, more precisely we have $ m \mathbb{Z}^{1 \times n} \cdot \SL_n(\mathbb{Z}) = m \mathbb{Z}^{1\times n}$. 
    Thus, let $y  = (y_1, \ldots, y_n)^\top$ be a column, where $y_i \geq 0 $. We follow the path 
    \[ 0 \to \alpha_1 \to 2 \alpha_1 \to \cdots \to y_1 \alpha_1 \to y_1 \alpha_1 + \alpha_2 \to \cdots \to y_1 \alpha_1 + y_2 \alpha_2 \to \cdots \to y.  \]
    This path has $\langle y,\textbf{1}\rangle_n$ arrows. Denote by $\varepsilon$ the number of arrows in this path that belong to $C$. Then, $h_C(y)=\langle y,\textbf{1}\rangle_n-(n+1)\varepsilon$.
    However, it follows from \Cref{Pro: Height function values on L1} that 
    \[ h_C(y) = \left\langle y,\textbf{1}-\frac{n+1}{m}\gamma\right\rangle_n=\langle y,\textbf{1}\rangle_n-\frac{n+1}{m} \langle y,\gamma\rangle_n. \]
    Comparing the two expressions for $h_C(y)$, we obtain $\langle y,\gamma\rangle_n=m\varepsilon\in m\mathbb{Z}$,
    which proves our claim. 
\end{proof}

We conclude this section by using height functions to prove that cuts of positive type give acyclic cut quivers. In other words, a cut of positive type is higher preprojective, see \Cref{Pro: Cut is higher pp iff positive type}.

\begin{Lem}\label{Lem: Positive type is acyclic}
Let $C$ be a cut of type $\theta(C) = \gamma=(\gamma_1, \gamma_2, \ldots, \gamma_{n+1})$ such that $\gamma_i > 0$ for all $i$. Then there is no infinite path in $\hat{Q}_C$. Equivalently, the cut quiver $Q_C$ for $L_0 / L_1$ is acyclic.  
\end{Lem}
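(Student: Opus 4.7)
My plan is to translate the acyclicity of $Q_C$ into a combinatorial identity via the height function, and then use the strict positivity of $\gamma$ to rule out nontrivial solutions.

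First I would set up the equivalence. Along every arrow of $\hat{Q}_C$ the height $h_C$ increases by exactly $1$, so $\hat{Q}_C$ has no directed cycles. Since $\hat{Q}$ is locally finite and $L_1$-periodic, an infinite path in $\hat{Q}_C$ must visit infinitely many vertices, and its projection to $Q_C = \hat{Q}_C / L_1$ must then revisit vertices, producing a cycle in $Q_C$. Conversely, any cycle in $Q_C$ lifts to a non-trivial path in $\hat{Q}_C$. So it is enough to prove $Q_C$ is acyclic.

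Next, I would assume for contradiction that there is a cycle in $Q_C$ of length $\ell>0$. Lift it to a path $p$ in $\hat{Q}_C$ from some $x_0$ to $x_0+y$ with $y\in L_1$. Let $t_i$ be the number of arrows of type $i$ in $p$, so that $\ell=\sum_{i=1}^{n+1} t_i$ and, using $\alpha_{n+1}=-\sum_{i=1}^n\alpha_i$,
\[ y \;=\; \sum_{i=1}^{n+1} t_i\alpha_i \;=\; \sum_{i=1}^{n}(t_i-t_{n+1})\alpha_i. \]
I would then compute $h_C(p)$ in two ways. On the one hand, every arrow of $p$ lies in $\hat{Q}_C$, hence contributes $+1$, giving $h_C(p)=\ell$. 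On the other hand, by $L_1$-equivariance and \Cref{Pro: Height function values on L1},
\[ h_C(p)\;=\;h_\gamma(y)\;=\;\left\langle y,\mathbf{1}-\tfrac{n+1}{m}\gamma\right\rangle_n. \]
Substituting the coordinates $(t_i-t_{n+1})_{i=1}^n$ of $y$ and using $\sum_{i=1}^{n+1}\gamma_i=m$, a routine expansion gives
\[ h_\gamma(y)\;=\;\sum_{i=1}^{n+1} t_i\Bigl(1-\tfrac{n+1}{m}\gamma_i\Bigr)\;=\;\ell-\tfrac{n+1}{m}\sum_{i=1}^{n+1} t_i\gamma_i. \]

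Finally, comparing the two expressions forces $\sum_{i=1}^{n+1} t_i\gamma_i=0$. Since $t_i\ge 0$ and every $\gamma_i>0$ by assumption, all $t_i$ vanish, contradicting $\ell>0$. Hence $Q_C$ has no cycles and $\hat{Q}_C$ has no infinite paths. The main bookkeeping step is the change of coordinates between the $(n{+}1)$-tuple $(t_1,\dots,t_{n+1})$ and the basis $\alpha_1,\dots,\alpha_n$ of $L_0$, but once this is done the cancellation is automatic.
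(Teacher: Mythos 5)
Your proof is correct and follows essentially the same strategy as the paper: lift a cycle in $Q_C$ to a path in $\hat{Q}_C$ from $x$ to $x+y$ with $y\in L_1$, count arrows by type to get $t=(t_1,\dots,t_{n+1})$, compute the height increment two ways, and conclude $\langle t,\gamma\rangle_{n+1}=0$. Your algebraic bookkeeping is a bit more streamlined (you expand $h_\gamma(y)$ directly using the coordinates $y_i=t_i-t_{n+1}$ and $\sum\gamma_i=m$, whereas the paper first solves for each $t_i$ in terms of $y$ and $h_\gamma(y)$ and then substitutes), but the underlying argument is the same.
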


\begin{proof}
 Assume there is an infinite path in $\hat{Q}_C$. Since $L_0/L_1$ is a finite group, there exists a non-empty subpath $c \colon x\to\ldots\to x+y$ in $\hat{Q}_C$ such that $y\in L_1$. Let $y=(y_1,\ldots,y_n)^\top$. 
 We know that $h_C(x+y)-h_C(x)=h_C(y)$, therefore, the height increment of $c$ is $h_C(c)=h_C(y)=h_{\gamma}(y)$. Now, the height increases exactly by $1$ each step and hence $c$ has exactly $h_{\gamma}(y)$ steps. Let $t=(t_1,\ldots, t_{n+1})$, where $t_i$ denotes the number of arrows of type $i$ in $c$. Then we have
  $t_{n+1}=h_{\gamma}(y)-\langle t, \textbf{1}\rangle_n$ and 
 \begin{align*}
     \sum_{i=1}^{n}{y_i\alpha_i}&=\sum_{i=1}^{n+1}{t_i\alpha_i}\\&=\sum_{i=1}^{n}{t_i\alpha_i}+(h_{\gamma}(y)-\langle t, \textbf{1}\rangle_n)\alpha_{n+1} \\
     &=\sum_{i=1}^{n}{t_i\alpha_i}+(h_{\gamma}(y)-\langle t, \textbf{1}\rangle_n)\left(-\sum_{i=1}^{n}{\alpha_i}\right)\\
     &=\sum_{i=1}^{n}{t_i\alpha_i}+(\langle t, \textbf{1}\rangle_n-h_{\gamma}(y))\left(\sum_{i=1}^{n}{\alpha_i}\right)\\
     &=\sum_{i=1}^{n}{\alpha_i(\langle t, \textbf{1}\rangle_n+t_i-h_{\gamma}(y))}.
 \end{align*}
 Comparing the $\alpha_i$-coefficients, we obtain for each $i \in \{ 1,\ldots, n \}$ that
 $$
 y_i=\langle t, \textbf{1}\rangle_n+t_i-h_{\gamma}(y).
 $$
 Summing over all $i$, we obtain
$$
 \langle y,\textbf{1}\rangle_n =n\langle t, \textbf{1}\rangle_n+\langle t, \textbf{1}\rangle_n-nh_{\gamma}(y)=(n+1)\langle t, \textbf{1}\rangle_n-nh_{\gamma}(y),
 $$
in other words,
$$
\langle t, \textbf{1}\rangle_n=\frac{\langle y,\textbf{1}\rangle_n+nh_{\gamma}(y)}{n+1}.
$$
This, combined with the equation for the $\alpha_i$-coefficient, gives 
\[ t_i=y_i+h_{\gamma}(y)-\langle t, \textbf{1}\rangle_n=y_i+h_{\gamma}(y)-\frac{\langle y,\textbf{1}\rangle_n+nh_{\gamma}(y)}{n+1}=y_i+\frac{h_{\gamma}(y)-\langle y,\textbf{1}\rangle_n}{n+1} \]
for all $i \in \{ 1,\ldots,n \}$. We also conclude that 
$$
t_{n+1}=h_{\gamma}(y)-\langle t, \textbf{1}\rangle_n=h_{\gamma}(y)-\frac{\langle y,\textbf{1}\rangle_n+nh_{\gamma}(y)}{n+1}=\frac{h_{\gamma}(y)-\langle y,\textbf{1}\rangle_n}{n+1}.
$$ 
Now, 
\begin{align*}
\langle t,\gamma\rangle_{n+1}&=\langle y,\gamma\rangle_n+\frac{h_{\gamma}(y)-\langle y,\textbf{1}\rangle_n}{n+1}\langle \gamma, \textbf{1}\rangle_{n+1}\\&=\langle y,\gamma\rangle_n+\frac{h_{\gamma}(y)-\langle y,\textbf{1}\rangle_n}{n+1}\cdot m\\&=\frac{m}{n+1}\left(\left\langle y,\frac{n+1}{m}\gamma\right\rangle_n+h_{\gamma}(y)-\langle y,\textbf{1}\rangle_n\right)\\&= \frac{m}{n+1}\left(h_{\gamma}(y)-\left\langle y,\textbf{1}-\frac{n+1}{m}\gamma\right\rangle_n\right)=0,
\end{align*}
where the last equality follows from \Cref{Pro: Height function values on L1}.
Since $\gamma_1,\ldots,\gamma_{n+1}>0$ and $t_1,\ldots,t_{n+1}\ge 0$, we conclude $t_1=\ldots=t_{n+1}=0$, which implies that the path $c$ is empty, which contradicts our assumption.
\end{proof}

\section{Construction of cuts and simplices of types}\label{Sec: Construction of cuts}
\subsection{Construction of cuts}
In this subsection, we construct a cut for a given type, following the ideas outlined in \cite{DramburgGasanova}. We fix as before $L_1 \xhookrightarrow[]{B'} L_0$ and $m = |L_0/L_1|$. We begin by recalling the construction by Amiot-Iyama-Reiten from \cite[Section 5]{AIR}, which produces a cut for the quiver $Q$ of a cyclic group generated by a junior element. In the following, recall that we refer by $x \bmod m$ to the smallest non-negative representative of the equivalence class $x + m \mathbb{Z}$. 

\begin{Pro}\cite[Theorem 5.6]{AIR} \label{Pro: Decreasing arrows cut}
    Suppose $L_0/ L_1$ is cyclic. If there exist $e_1, \ldots, e_{n+1} \in \{ 0, \ldots, m-1 \}$ with $\sum_{i = 1}^{n+1} e_i = m$ and an isomorphism 
    \[ \xi \colon L_0/L_1 \to  \mathbb{Z}/m\mathbb{Z} \]
    such that $\xi(\alpha_i + L_1) = e_i + m \mathbb{Z}$ for all $i \in \{ 1, \ldots, n+1\} $, then there exists a cut of type $(e_1, \ldots, e_{n+1})$ on $Q$. 
\end{Pro}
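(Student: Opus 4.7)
The plan is to give an explicit construction of the cut using the cyclic structure. Identify $Q_0$ with $\mathbb{Z}/m\mathbb{Z}$ via $\xi$ and represent each vertex by its smallest non-negative integer $\bar{x} \in \{0, \ldots, m-1\}$. Then an arrow of type $i$ starting at vertex $\bar{x}$ targets $(\bar{x} + e_i) \bmod m$. I would declare
$$ C = \bigl\{ \bar{x} \to (\bar{x} + e_i) \bmod m \,\bigm|\, \bar{x} + e_i \geq m,\ i \in \{1, \ldots, n+1\} \bigr\}, $$
so $C$ consists of precisely the arrows along which the integer representative wraps around modulo $m$. Equivalently, for each type $i$ the arrows in $C$ are those starting at one of the top $e_i$ vertices $\bar{x} \in \{m-e_i, \ldots, m-1\}$.

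Next, I would verify that $C$ is a cut in the sense of the paper. Fix an elementary cycle at $\bar{x}$ that traverses each type exactly once in some order. A non-cut arrow of type $i$ contributes $+e_i$ to the integer representative, while a cut arrow of type $i$ contributes $-(m - e_i)$. Thus the total change along the cycle equals $\sum_{i=1}^{n+1} e_i - k m = (1 - k)m$, where $k$ is the number of arrows of the cycle lying in $C$ and I used the hypothesis $\sum_i e_i = m$. Closing up the cycle forces this to vanish, so $k = 1$, which is exactly the condition for $C$ to be a cut.

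Finally, to compute $\theta(C)$, I would count for each fixed $i$ the number of vertices $\bar{x} \in \{0, \ldots, m-1\}$ with $\bar{x} + e_i \geq m$, which is exactly $e_i$. Hence $C$ has type $(e_1, \ldots, e_{n+1})$, as required.

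I do not anticipate a serious obstacle here: the cyclic hypothesis collapses the combinatorics to a single wrap-around condition, and both required properties drop out of elementary counting. The more delicate question — producing a cut for an arbitrary cofinite $L_1 \leq L_0$ and type $\gamma$ satisfying the divisibility condition $\gamma \cdot B \in m\mathbb{Z}^{1 \times (n+1)}$ from \Cref{Theo: Divisibility conditions} — is the one I would expect to require the more involved construction advertised as \Cref{Pro: Cut constr}, presumably by assembling periodic cuts on $\hat{Q}$ from cyclic building blocks or from a carefully chosen $L_1$-equivariant height function via \Cref{Pro: bijection cuts height functions}.
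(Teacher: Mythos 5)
Your construction and verification coincide with the paper's own proof: you define the cut as the set of wrap-around (equivalently, decreasing) arrows under the labeling induced by $\xi$, verify the cut property by the same telescoping count $\sum_i e_i - km = 0$ along elementary cycles, and count cut arrows of type $i$ by source vertex (the paper counts by target, giving the same $e_i$). The approach is essentially identical.
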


\begin{proof}
    We use the isomorphism $\xi$ to label the vertices of $Q$ by $\{ 0, \ldots , m-1\}$, by identifying $\xi(x)$ with its smallest non-negative representative. This way, every arrow $x \to x + \alpha_i$ of type $i$ corresponds to $j \to (j + e_i) \bmod m$ for $\xi(x) = j \in \{ 0, \ldots, m-1 \}$. We then define 
    \[ C = \{ j \to (j + e_i) \bmod m \mid j > (j + e_i) \bmod m \}. \]
    We claim that $C$ is a cut. To see this, consider an elementary cycle $c$. Let $D$ denote the set of decreasing arrows of $c$, that is, $D = C\cap c$, and let $N=c\setminus D$ be the set of non-decreasing arrows in $c$. Each decreasing arrow acts by adding $e_i-m$, and any non-decreasing arrow acts by adding $e_i$. Then we get
    \begin{align*}
    0&=\sum_{a\in N } e_{\theta(a)}+\sum_{a\in D}{(e_{\theta(a)}-m)}\\&=\sum_{a\in c}{e_{\theta(a)}}-\sum_{a\in D}{m}=m-\sum_{a\in D}{m},
    \end{align*}
    which means there is exactly one decreasing arrow in $c$. To see that $C$ is of the correct type, simply note that an arrow of type $i$ is in $C$ if and only if it ends at one of the vertices $0, 1, \ldots, e_i -1$. 
\end{proof}

\begin{Rem}
    Note that the condition on the $e_i$ in \Cref{Pro: Decreasing arrows cut} means that we are considering the group $\mathbb{Z}/m\mathbb{Z}$ as generated inside of $\SL_{n+1}(k)$ by the element $\frac{1}{m}(e_1, \ldots, e_{n+1})$. In the terminology of Ito-Reid \cite{ItoReid}, this is a junior element. 
\end{Rem}

Now, our construction proceeds in two steps. We construct an epimorphism $\xi$ as above for each potential type $\gamma$, and then lift a cut from this to our quiver. Equivalently, we are showing that we can find a cyclic subgroup of $G$ generated by the correct junior to realise $\gamma$ as a type, and extend to all of $G$. 

\begin{Lem} \label{Lem: xi is a hom}
    Let $\gamma = (\gamma_1, \ldots, \gamma_{n+1}) \in \mathbb{N}^{1 \times (n+1)}$ such that $\langle \gamma, \mathbf{1} \rangle_{n+1} = m$ and $\gamma B \in m \mathbb{Z}^{1 \times (n+1)}$. Then 
    \[ \xi_\gamma \colon L_0 \to \mathbb{Z}/m\mathbb{Z}, x \to \langle x, \gamma \rangle_n + m\mathbb{Z} \]
    is a group homomorphism with $\Ker(\xi_\gamma) \supseteq L_1$ and cyclic image of order $\frac{m}{\gcd(\gamma_1, \ldots, \gamma_{n+1})}$. 
\end{Lem}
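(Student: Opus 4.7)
The plan is to verify each of the three assertions separately; all three are essentially routine, with only the gcd identity requiring a small observation about $\gamma_{n+1}$.

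First, since $x \mapsto \langle x, \gamma \rangle_n$ is $\mathbb{Z}$-linear on $L_0 \simeq \mathbb{Z}^n$, composing with the projection $\mathbb{Z} \to \mathbb{Z}/m\mathbb{Z}$ produces a group homomorphism. This is immediate and needs no further argument; its image is automatically a cyclic subgroup of $\mathbb{Z}/m\mathbb{Z}$.

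Second, to check $L_1 \subseteq \Ker(\xi_\gamma)$, I would unpack the hypothesis $\gamma B \in m\mathbb{Z}^{1 \times (n+1)}$ using the block structure
\[ B = \left(\begin{array}{c|c} B' & \mathbf{1}^\top \\ \hline 0 & 1 \end{array}\right). \]
The first $n$ entries of $\gamma B$ are $(\gamma_1, \ldots, \gamma_n) B'$, while the last entry is $\langle \gamma, \mathbf{1} \rangle_{n+1} = m$, which is automatically in $m\mathbb{Z}$. So the hypothesis is equivalent to $(\gamma_1, \ldots, \gamma_n) B' \in m\mathbb{Z}^{1 \times n}$, i.e.\ to the statement that every column of $B'$ pairs with $(\gamma_1, \ldots, \gamma_n)$ into $m\mathbb{Z}$. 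Since $L_1$ is the $\mathbb{Z}$-span of the columns of $B'$, it follows that $\langle y, \gamma \rangle_n \in m\mathbb{Z}$ for every $y \in L_1$, giving $L_1 \subseteq \Ker(\xi_\gamma)$.

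Third, the image of $\xi_\gamma$ is generated in $\mathbb{Z}/m\mathbb{Z}$ by the residues $\xi_\gamma(\alpha_i) = \gamma_i \bmod m$ for $i = 1, \ldots, n$. The subgroup of $\mathbb{Z}/m\mathbb{Z}$ generated by these residues is
\[ \langle \gamma_1, \ldots, \gamma_n, m \rangle / m\mathbb{Z} = \gcd(\gamma_1, \ldots, \gamma_n, m) \cdot \mathbb{Z}/m\mathbb{Z}, \]
which has order $m / \gcd(\gamma_1, \ldots, \gamma_n, m)$. The only remaining step is the key (but tiny) observation that $\gamma_{n+1} = m - \gamma_1 - \cdots - \gamma_n$, which gives
\[ \gcd(\gamma_1, \ldots, \gamma_n, m) = \gcd(\gamma_1, \ldots, \gamma_n, \gamma_{n+1}), \]
so the order of the image is $\frac{m}{\gcd(\gamma_1, \ldots, \gamma_{n+1})}$, as claimed.

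There is no real obstacle here; the only point that requires any care is reading off the right block of $\gamma B$ to see that the divisibility hypothesis exactly encodes pairing with columns of $B'$, and remembering to use $\gamma_{n+1} = m - \sum_{i=1}^n \gamma_i$ to trade $m$ for $\gamma_{n+1}$ in the final gcd.
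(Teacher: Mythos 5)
Your proof is correct and follows essentially the same route as the paper's: trivially checking the homomorphism property, unpacking $\gamma B \in m\mathbb{Z}^{1\times(n+1)}$ via the column span of $B'$ to get $L_1 \subseteq \Ker(\xi_\gamma)$, and identifying the image as the subgroup generated by $d = \gcd(\gamma_1,\ldots,\gamma_n,m) = \gcd(\gamma_1,\ldots,\gamma_{n+1})$. The only cosmetic difference is that you invoke the general description of the subgroup generated by residues, while the paper spells out the Bezout combination; the content is identical.
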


\begin{proof}
    It is clear that $\xi_\gamma$ is a group homomorphism. We first investigate 
    \[ \Ker(\xi_\gamma) =\{x\in L_0\mid  \langle x,\gamma\rangle_n\equiv_m 0 \}. \]
    Recall that any $y \in L_1$ can be written as $y = B'v$ for some $v \in \mathbb{Z}^n$. Using our assumption that $\gamma B \in m \mathbb{Z}^{1 \times (n+1)}$, we find   
    \[ \xi_\gamma(y) = \langle \gamma, B'v\rangle_n=\langle \gamma B,v\rangle_n \equiv_m 0,   \]
and therefore $\Ker(\xi_\gamma) \supseteq L_1$. To compute the order of $\Im(\xi_\gamma)$, note that we can write $d = \gcd(\gamma_1, \ldots, \gamma_{n+1}) = \gcd(\gamma_1, \ldots, \gamma_n, m)$ as an integer combination of $\gamma_1\ldots,\gamma_n$ and $m$. That means there exist numbers $x_1, \ldots, x_n \in \mathbb{Z}$ such that $\xi_\gamma(x_1, \ldots,x_n) = d+ m \mathbb{Z} $. Since $d \mid \gamma_1,\ldots, \gamma_n$, it follows that the subgroup $\Im(\xi_\gamma) \subseteq  \mathbb{Z}/m\mathbb{Z}$ is generated by the coset of $d$, and therefore has order $\frac{m}{\gcd(\gamma_1, \ldots,\gamma_{n+1})}$.
\end{proof}

\begin{Pro} \label{Pro: Cut constr}
Let $\gamma$ be as in \Cref{Lem: xi is a hom} and let $L_2 = \Ker(\xi_\gamma)$. Then there exists an $L_2$-periodic cut $C$ of type $(\frac{\gamma_1}{d},\ldots,\frac{\gamma_{n+1}}{d})$, where $d=\gcd(\gamma_1,\ldots,\gamma_{n+1})$. In particular, $C$ is $L_1$-periodic of type $(\gamma_1,\ldots \gamma_{n+1})$. 
\end{Pro}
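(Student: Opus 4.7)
The plan is to reduce the construction to the cyclic case handled by \Cref{Pro: Decreasing arrows cut}, passing through the intermediate lattice $L_2$. By \Cref{Lem: xi is a hom}, we have $L_1 \subseteq L_2$ and $L_0/L_2 \cong \Im(\xi_\gamma)$ is cyclic of order $m/d$. The map $\xi_\gamma$ descends to an isomorphism
\[ \xi \colon L_0/L_2 \xrightarrow{\sim} \mathbb{Z}/(m/d)\mathbb{Z}, \qquad \alpha_i + L_2 \mapsto \tfrac{\gamma_i}{d} + \tfrac{m}{d}\mathbb{Z}, \]
obtained by composing with the identification of the image of $\xi_\gamma$ (generated by $d + m\mathbb{Z}$) with $\mathbb{Z}/(m/d)\mathbb{Z}$ via $d + m\mathbb{Z} \mapsto 1 + \tfrac{m}{d}\mathbb{Z}$.

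First I would dispose of the boundary case $d = m$. Here $m \mid \gamma_i$ and $\sum_i \gamma_i = m$ force $\gamma$ to be a trivial type $(0,\ldots,0,m,0,\ldots,0)$, and the trivial cut consisting of all arrows of the nonzero type is $L_0$-periodic (hence $L_2$- and $L_1$-periodic) of type $\gamma$. In the main case $d < m$, I would verify the hypotheses of \Cref{Pro: Decreasing arrows cut} for the cyclic group $L_0/L_2$ with the isomorphism $\xi$ above and the integers $\bar\gamma_i := \gamma_i/d$. The sum is $\sum_i \bar\gamma_i = m/d$, and each $\bar\gamma_i$ lies in $\{0,1,\ldots,m/d - 1\}$: if instead $\bar\gamma_i = m/d$, then $\bar\gamma_j = 0$ for $j \neq i$, so $\gamma_i = m$ and $d = m$, contradicting $d < m$. \Cref{Pro: Decreasing arrows cut} then produces a cut on the McKay quiver of $L_0/L_2$ of type $(\bar\gamma_1,\ldots,\bar\gamma_{n+1})$, which pulls back to an $L_2$-periodic cut $C$ of $\hat{Q}$ of the same type.

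Since $L_1 \subseteq L_2$, the cut $C$ is automatically $L_1$-periodic. To compute its $L_1$-type, I would observe that the canonical quotient map $\hat{Q}/L_1 \twoheadrightarrow \hat{Q}/L_2$, induced by the surjection $L_0/L_1 \twoheadrightarrow L_0/L_2$, is a $|L_2/L_1| = d$-fold covering of quivers that preserves arrow types. Therefore each arrow of $\hat{Q}/L_2$ in $C$ lifts to exactly $d$ arrows of the same type in $\hat{Q}/L_1$, so the $L_1$-type of $C$ is $d \cdot (\bar\gamma_1,\ldots,\bar\gamma_{n+1}) = (\gamma_1,\ldots,\gamma_{n+1})$, as required.

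The main obstacle is the careful bookkeeping of types along the chain of covers $\hat{Q} \twoheadrightarrow \hat{Q}/L_2 \twoheadrightarrow \hat{Q}/L_1$ and the separate treatment of the degenerate case $m/d = 1$, where the AIR construction is vacuous; the actual combinatorial construction of the cut is entirely outsourced to \Cref{Pro: Decreasing arrows cut} via the intermediate cyclic quotient.
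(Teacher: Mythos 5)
Your proof is correct and follows essentially the same route as the paper's: factor $\xi_\gamma$ through the intermediate cyclic quotient $L_0/L_2$, invoke \Cref{Pro: Decreasing arrows cut} with $e_i = \gamma_i/d$, and relate the $L_2$-type to the $L_1$-type via the $d$-fold cover. If anything, you are slightly more explicit than the paper in verifying that $\bar\gamma_i \in \{0,\ldots,m/d-1\}$ when $d < m$, which is one of the hypotheses of \Cref{Pro: Decreasing arrows cut}; the paper's proof passes over this quietly, so your check is a small but welcome addition rather than a deviation.
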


\begin{proof}
    We write $m' = \frac{m}{d}$. If $m' = 1$, one of the $\gamma_i$ is $m$ and $L_2 = L_0$, hence we take $C$ to be the constant cut of the corresponding type. Otherwise, by \Cref{Lem: xi is a hom}, the morphism $\xi_\gamma$ has a cyclic image $\Im(\xi_\gamma) \simeq \mathbb{Z}/m'\mathbb{Z}$. Hence, we have that $L_0/L_2$ is cyclic of order $m'$, and we will use \Cref{Pro: Decreasing arrows cut} to construct an $L_2$-periodic cut. To do this, we factor $\xi_\gamma$ as the projection $q \colon L_0 \to L_0/L_2$, followed by the induced isomorphism $\overline{\xi_\gamma}$ to $\mathbb{Z}/m'\mathbb{Z}$, and the inclusion of $\mathbb{Z}/m'\mathbb{Z}$ as $\Im(\xi_\gamma) \leq \mathbb{Z}/m\mathbb{Z}$. 
    \[\begin{tikzcd}
	{L_0} & {\mathbb{Z}/m\mathbb{Z}} \\
	{L_0/L_2} & {\mathbb{Z}/m'\mathbb{Z}}
	\arrow["\xi_\gamma", from=1-1, to=1-2]
	\arrow["q"', from=1-1, to=2-1]
	\arrow["{\overline{\xi_\gamma}}"', "\sim", from=2-1, to=2-2]
	\arrow["{\cdot d}"', from=2-2, to=1-2]
    \end{tikzcd}\]
    We claim that $\overline{\xi_\gamma} \circ q$ satisfies the conditions from \Cref{Pro: Decreasing arrows cut}. Since $\xi_\gamma(\alpha_i) = \gamma_i + m\mathbb{Z}$, we have that $(\overline{\xi_\gamma} \circ q )(\alpha_i) = \frac{\gamma_i}{d} + m \mathbb{Z}$. Furthermore, since $\langle \gamma, \mathbf{1} \rangle_{n+1} = m$, it follows that $\langle \frac{1}{d} \gamma , \mathbf{1} \rangle_{n+1} = \frac{m}{d} = m'$. Therefore, the values $e_i = \frac{\gamma_i}{d}$ together with $\overline{\xi_\gamma} \circ q$ satisfy the conditions from \Cref{Pro: Decreasing arrows cut}. Thus, there exists an $L_2$-periodic cut $C$ of type $\frac{1}{d} \gamma$, and since $L_0/L_2$ has index $d$ in $L_0/L_1$, it follows that the type of $C$ as an $L_1$-periodic cut is $\gamma$.
\end{proof}

This leads to our first main theorem. 

\begin{Theo} \label{Theo: Divisibility conditions}
    Let $L_1 \xhookrightarrow[]{B'} L_0$ be a cofinite embedding of lattices and $m = |L_0/L_1|$. Then a vector $\gamma \in \mathbb{N}^{1 \times (n+1)}$ is a type of a cut if and only if $\langle \gamma, \mathbf{1}\rangle_{n+1} = m $ and 
    \[ \gamma B \in m \mathbb{Z}^{1 \times (n+1)}. \]
    Furthermore, $\gamma$ is a type of a higher preprojective cut if and only if in addition, $\gamma_i >0$ for all $i \in \{1, \ldots, n+1 \}$.
\end{Theo}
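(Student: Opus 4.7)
My plan is to assemble this theorem directly from results already proved in the preceding sections, since each of the two biconditionals corresponds to a concrete construction or necessity argument already in place.

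For the forward direction of the first biconditional, I will take an arbitrary $L_1$-periodic cut $C$ with $\theta(C) = \gamma$. The equality $\langle \gamma, \mathbf{1}\rangle_{n+1} = m$ is the counting observation noted in \Cref{Rem: Types are on a cone slice}: each residue class in $L_0/L_1$ accounts for exactly one arrow of each type, so the total cut arrow count across types is $m$. The divisibility statement $\gamma B' \in m\mathbb{Z}^{1 \times n}$ is precisely \Cref{Pro: Divisibility conditions are necessary}, proved via height functions. Combining these, and noting that the last column of the $(n+1) \times (n+1)$ matrix $B$ is the all-ones vector $\mathbf{1}$, the last entry of $\gamma B$ is $\langle \gamma, \mathbf{1}\rangle_{n+1} = m \in m\mathbb{Z}$, so $\gamma B \in m\mathbb{Z}^{1 \times (n+1)}$ as claimed.

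For the backward direction, I will assume $\gamma \in \mathbb{N}^{1 \times (n+1)}$ satisfies $\langle \gamma, \mathbf{1}\rangle_{n+1} = m$ and $\gamma B \in m\mathbb{Z}^{1 \times (n+1)}$. Extracting the first $n$ columns of $B$ gives exactly $\gamma B' \in m\mathbb{Z}^{1 \times n}$, which is the hypothesis needed for \Cref{Lem: xi is a hom}. Then \Cref{Pro: Cut constr} produces an $L_2$-periodic cut of type $\frac{1}{d}\gamma$ on the intermediate cover, which is in particular an $L_1$-periodic cut of type $\gamma$ on $\hat{Q}$.

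For the higher preprojective addendum, I will invoke \Cref{Pro: Cut is higher pp iff positive type}: among cuts, those giving a higher preprojective grading are characterised by containing arrows of every type, which translates exactly to $\gamma_i > 0$ for every $i \in \{1,\ldots, n+1\}$. Since the existence of a cut of type $\gamma$ with positive entries is equivalent (by the first part) to the divisibility conditions holding with $\gamma \in \mathbb{N}_{>0}^{1 \times (n+1)}$, the addendum follows by combining the two equivalences.

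I do not expect a real obstacle here since the technical work has been done in \Cref{Pro: Divisibility conditions are necessary} and \Cref{Pro: Cut constr}; the proof is a synthesis. The one point to handle carefully is clarifying that the $(n+1)$-st column of $\gamma B$ encodes the cardinality condition $\sum_i \gamma_i = m$ only modulo $m$, so the hypothesis $\langle \gamma, \mathbf{1}\rangle_{n+1} = m$ is stated separately and is not redundant with the divisibility condition alone.
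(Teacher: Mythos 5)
Your proposal matches the paper's proof exactly: one direction from \Cref{Pro: Divisibility conditions are necessary}, the other from \Cref{Pro: Cut constr} (via \Cref{Lem: xi is a hom}), and the higher preprojective addendum from \Cref{Pro: Cut is higher pp iff positive type}. Your additional remark that the last column of $B$ makes the $(n+1)$-st divisibility condition encode $\langle \gamma, \mathbf{1}\rangle_{n+1} \equiv_m 0$ but not the exact equality, hence the separate hypothesis, is a correct and helpful clarification that the paper leaves implicit.
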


\begin{proof}
    The first implication follows from \Cref{Pro: Divisibility conditions are necessary}. The other implication follows from \Cref{Pro: Cut constr}. The statement about higher preprojectivity is \Cref{Pro: Cut is higher pp iff positive type}. 
\end{proof}

\subsection{Simplices of types}\label{SSec: Hollow lattice polytopes}
Now, we want to investigate the problem of when a higher preprojective cut exists for a given $L_1 \xhookrightarrow{B'} L_0$. To do this, it suffices to show that a type of a higher preprojective cut exists. We first note, as alluded to in \Cref{Rem: Types are on a cone slice}, that the set of all types of cuts forms a lattice simplex. 

\begin{Rem}
    Let $\gamma$ be the type of a cut. Then we can rewrite \Cref{Theo: Divisibility conditions} as saying that $\gamma$ lies in a lattice $N = m \mathbb{Z}^{1 \times (n+1)} B^{-1} $. Combining this with the condition that $\gamma_i \geq 0$ for all $i \in \{ 1, \ldots, n+1\}$ and $\langle \gamma, \mathbf{1} \rangle_{n+1} = m$, we see that $\gamma$ lies in the convex hull $P = \operatorname{Conv}(\{ \theta(C) \mid C \text{ is a trivial cut} \})$. Recall that a trivial cut $C$ is given by placing all arrows of a fixed type $i$ in degree $1$, and that its type therefore is given by $\theta(C)_i = m$. We fix this polytope $P$, and since the vertices of $P$ are in $N$ we consider it as a lattice polytope. Furthermore, since $P$ has $n+1$ vertices, it is a lattice simplex. 
    We also note that an isomorphic lattice polytope in the standard $\mathbb{Z}^{n+1}$ lattice is given by the convex hull $\operatorname{Conv}(\operatorname{Cols}(B^\top))$ of the rows of $B$.    
\end{Rem}

\begin{Cor}\label{Cor: Cut exists iff internal point exists}
    The skew-group algebra $R \ast G$ admits a higher preprojective cut if and only if the polytope $P$ has an internal lattice point.
\end{Cor}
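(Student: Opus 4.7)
The plan is to directly translate \Cref{Theo: Divisibility conditions} into the geometric statement about the polytope $P$; I do not expect any real obstacle, since all the content has already been assembled and this is essentially a reformulation.

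First, I would unpack the structure of $P$. By construction, $P$ is the convex hull of the trivial type vectors $m e_1, \ldots, m e_{n+1}$ living inside the affine hyperplane $H = \{\gamma \in \mathbb{R}^{1 \times (n+1)} \mid \langle \gamma, \mathbf{1}\rangle_{n+1} = m\}$, and its lattice structure comes from $N = m \mathbb{Z}^{1 \times (n+1)} B^{-1}$. Since $P$ is an $n$-dimensional simplex of full dimension in $H$, every proper face of $P$ is the convex hull of a strict subset of $\{m e_1, \ldots, m e_{n+1}\}$, which is exactly the locus where at least one coordinate $\gamma_i$ vanishes. Hence the relative interior of $P$ is precisely the set of $\gamma \in H$ with $\gamma_i > 0$ for all $i \in \{1, \ldots, n+1\}$, and an internal lattice point of $P$ means an element of $N$ lying in this relative interior.

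Second, I would apply \Cref{Theo: Divisibility conditions}. By that theorem, a vector $\gamma$ is the type of a cut on $Q$ if and only if $\gamma \in N$ and $\langle \gamma, \mathbf{1}\rangle_{n+1} = m$, i.e.\ $\gamma \in P \cap N$. The second half of the theorem then says that $\gamma$ is the type of a higher preprojective cut if and only if it additionally satisfies $\gamma_i > 0$ for all $i$, which by the previous paragraph is exactly the condition that $\gamma$ is an internal lattice point of $P$.

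To conclude: $R \ast G$ admits a higher preprojective cut if and only if there exists a higher preprojective type, which by the above two steps is equivalent to $P$ containing an internal lattice point. The only direction with actual mathematical content is the ``if'' direction, and this was already supplied by the explicit construction in \Cref{Pro: Cut constr}, so no further argument is required.
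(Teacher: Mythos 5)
Your proof is correct and takes the same route as the paper: the corollary is left without an explicit proof there, being treated as an immediate consequence of \Cref{Theo: Divisibility conditions} together with the preceding remark identifying the lattice $N = m\mathbb{Z}^{1\times(n+1)}B^{-1}$ and the simplex $P$, and your write-up simply makes that reformulation explicit, including the useful observation that the relative interior of $P$ inside the affine hyperplane $\langle\gamma,\mathbf{1}\rangle_{n+1}=m$ is exactly the strict-positivity locus $\gamma_i>0$.
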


\begin{Def}
    A lattice polytope $P$ is called \emph{hollow} if there exist no interior lattice points in $P$. A lattice polytope is called \emph{empty} if the only lattice points in $P$ are the vertices. 
\end{Def}

Given a lattice polytope, it is of course possible to check whether it contains an interior lattice point. However, given just the group $G$, we are not aware of an easy criterion for the existence of a higher preprojective cut that avoids computing $P$. Let us comment on this problem for cyclic groups.

\begin{Rem}
    Consider a cyclic group $G = \langle \frac{1}{m}(e_1, \ldots, e_{n+1}) \rangle \leq \SL_{n+1}(k)$, and fix some $B$ for it. Consider any $\gamma \in \mathbb{Z}^{1 \times (n+1)}$ such that 
    \[ \gamma B \in m \mathbb{Z}^{ 1 \times(n+1)} \]
    and $0 \leq \gamma_i < m $ for all $i$. Then $\frac{1}{m}(\gamma_1, \ldots, \gamma_{n+1})$ defines an element in $G$, and conversely every element $\frac{1}{m}(f_1, \ldots, f_{n+1})$ in $G$ satisfies 
    \[  (f_1, \ldots, f_{n+1}) B \in m \mathbb{Z}^{ 1 \times(n+1)}.  \] 
    Hence, to find types of cuts, we need to find elements $\frac{1}{m}(f_1, \ldots, f_{n+1})$ in $G$ such that $\sum_{i=1}^{n+1} f_i = m$. These are simply the junior elements (see \Cref{Sec: Toric geometry}), so we need to compute all elements in $G$ to decide whether $G$ has a junior element $\frac{1}{m}(f_1, \ldots, f_{n+1})$ with all $f_i \neq 0$, and hence whether a higher preprojective cut exists. We are not aware of a general procedure that avoids computing all elements of $G$ to check whether such an element, respectively such an internal point in $P$, exists. In \cite[Example 6.5]{DramburgGasanova}, an example for the group of order $11$ was given, further illustrating this point. 
\end{Rem}

The upshot of this perspective is that a generic lattice polytope is not hollow, as the following theorem by Nill and Ziegler shows. To phrase it, we recall that a \emph{lattice projection} is a surjective affine map 
\[f \colon  \mathbb{R}^n \twoheadrightarrow \mathbb{R}^m, x \mapsto Ax + b \]
which preserves the lattices, that means which satisfies that $f(\mathbb{Z}^n) = \mathbb{Z}^m$. In this case, the lattice projection of the lattice polytope $P \subseteq \mathbb{R}^n$ is $f(P)$, which is a lattice polytope in $\mathbb{R}^m$. 

\begin{Theo}\cite[Theorem 1.1]{NillZiegler} \label{Theo: NillZiegler}
    For every $n \in \mathbb{N}$, there are only finitely many hollow $n$-dimensional lattice polytopes that do not project onto a lower dimensional hollow lattice polytope.
\end{Theo}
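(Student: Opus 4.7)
The plan is to invoke the classical flatness theorem from the geometry of numbers (Khinchin, with quantitative refinements by Banaszczyk--Litvak--Pajor--Szarek and Kannan--Lov\'asz): there exists a constant $w(n)$ such that every hollow convex body in $\mathbb{R}^n$ has lattice width at most $w(n)$. Applied to an $n$-dimensional hollow lattice polytope $P$, this yields a primitive integer functional $\ell \colon \mathbb{Z}^n \to \mathbb{Z}$ with $\ell(P) \subseteq [0, w(n)]$. I would then induct on $n$; the base case $n = 1$ is trivial since, up to unimodular equivalence, the only hollow lattice $1$-polytope is the unit segment.

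For the induction step, consider the lattice projection $\pi \colon \mathbb{R}^n \to \mathbb{R}^{n-1}$ along $\ell$. The image $Q = \pi(P)$ is a lattice polytope of dimension at most $n - 1$. If $Q$ is itself hollow, then $P$ projects onto a lower dimensional hollow polytope and there is nothing to prove, so assume $Q$ is not hollow. Up to unimodular equivalence preserving $\pi$, the polytope $P$ is then determined by $Q$ together with, for each lattice point $q \in Q$, the fiber $\pi^{-1}(q) \cap P \subseteq [0, w(n)]$. Hollowness of $P$ forces the fiber above each \emph{interior} lattice point of $Q$ to contain no integer, hence to lie inside an open subinterval of $[0, w(n)]$ of length strictly less than $1$. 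Consequently the fiber data takes only finitely many values once the lattice-point count of $Q$ is bounded.

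The crux is then to control $Q$, and I would proceed in two directions. First, apply the inductive hypothesis to $Q$: either $Q$ itself does not project onto a lower dimensional hollow polytope, in which case there are finitely many such $Q$ by induction, or $Q$ does project further and iterating the reduction reaches the previous situation after finitely many steps. Second, bound the number of interior lattice points of $Q$ uniformly, for which I would invoke Hensley's theorem and its refinement by Lagarias--Ziegler: a lattice polytope with bounded interior lattice-point count has bounded volume, and up to unimodular equivalence there are only finitely many such polytopes in each dimension. Combining these two finiteness inputs with the finiteness of the fiber data completes the argument. The main obstacle will be the bookkeeping of the nested induction and ensuring that the Hensley bound applies uniformly across the class of $Q$ one produces -- the constants $w(n)$ grow very fast, so the resulting count is astronomical, but this is immaterial for the qualitative finiteness statement.
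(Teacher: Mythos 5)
This statement is quoted from Nill--Ziegler without a proof in the paper (the paper treats it as an external black box and only alludes, in the proof of \Cref{Cor: No higher preprojective grading if exceptional or projecting}, to the fact that the underlying proof yields a volume bound). So the comparison is between your sketch and the actual Nill--Ziegler argument, whose shape you have correctly identified: flatness theorem to get bounded width, a projection step, and ultimately Lagarias--Ziegler finiteness for lattice polytopes of bounded volume.

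However, there are two genuine gaps exactly where the theorem is actually hard. First, your ``apply the inductive hypothesis to $Q$'' step does not go through: you are in the case where $Q = \pi(P)$ is \emph{not} hollow, and the inductive hypothesis is a statement about hollow polytopes, so it says nothing about $Q$. You cannot iterate the projection argument on $Q$ in the way described. Second, and more seriously, you invoke Hensley's theorem as if it supplied a uniform bound on the number of interior lattice points of $Q$. Hensley's theorem goes the other way: it takes a bound on the number of interior lattice points as \emph{input} and produces a volume bound. Nothing in your sketch establishes that bound on $Q$ --- and indeed the constraint that each fiber over an interior lattice point has length at most $1$ does not limit how many interior lattice points $Q$ can have (a long thin $Q$ with many interior points and uniformly short fibers over them is perfectly consistent with $P$ being hollow). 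Producing this control on $Q$ is precisely the technical heart of the Nill--Ziegler argument, and it is missing. A smaller issue: the claim that ``the fiber data takes only finitely many values'' is not well posed as stated, since fiber endpoints are real numbers; what one actually needs is a bound on volume (or on vertex coordinates up to unimodular equivalence), not on fibers over lattice points of $Q$.
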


The finitely many exceptions in the above theorem are called exceptional polytopes. 

\begin{Cor}\label{Cor: No higher preprojective grading if exceptional or projecting}
    Let $G \leq \SL_{n+1}(k)$ be finite abelian such that the skew-group algebra $R \ast G$ does not afford a higher preprojective cut. Let $P$ be the simplex associated to $R \ast G$. 
    \begin{enumerate}
        \item If $P$ is an exceptional polytope, the order of $G$ is bounded. 
        \item For all but finitely many $G$, the polytope $P$ admits a lower dimensional hollow lattice projection.
    \end{enumerate}
\end{Cor}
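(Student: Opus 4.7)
The proof plan is to combine \Cref{Cor: Cut exists iff internal point exists} with the Nill--Ziegler theorem, and then unpack what ``finitely many polytopes'' means for the group $G$. Let me first observe that the hypothesis that $R \ast G$ admits no higher preprojective cut means exactly that $P$ is hollow as a lattice simplex, by \Cref{Cor: Cut exists iff internal point exists}. Applying \Cref{Theo: NillZiegler}, the simplex $P$ is then either one of the finitely many $n$-dimensional exceptional polytopes, or it admits a lattice projection onto a lower-dimensional hollow polytope.

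For part (1), I would use that the normalized volume of $P$ in its ambient lattice is an invariant of the lattice-isomorphism class, and that it encodes the group order $m = |G|$. Concretely, using the model $P = \operatorname{Conv}(\operatorname{Cols}(B^\top))$ on the hyperplane $\sum x_i = m$, the normalized volume of $P$ is $|\det(B)| = |\det(B')| = m$. Since the list of exceptional polytopes in dimension $n$ is finite, there is a uniform upper bound on their normalized volumes, hence on the corresponding values of $m = |G|$. This gives the desired bound.

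For part (2), I want to transfer the finiteness statement from polytopes to groups. The passage from $G$ to $P$ factors through a sublattice $L_1 \hookrightarrow L_0$ of index $m$, taken up to the $\SL_n(\mathbb{Z})$-action, as discussed in the excerpt. For any fixed $m$, there are only finitely many such sublattices up to this action, so only finitely many embeddings $G \leq \SL_{n+1}(k)$ up to equivalence produce a given $m$. Combining this with part (1), only finitely many $G$ can yield an exceptional $P$. Hence for all remaining $G$ (i.e.\ all but finitely many), the hollow simplex $P$ must fall into the other case of \Cref{Theo: NillZiegler} and project onto a lower-dimensional hollow lattice polytope.

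The only step that requires a moment of care is the volume computation identifying the normalized lattice volume of $P$ with $m$; once this is in place, both statements fall out immediately. I do not anticipate any real obstacle here, since both the combinatorial correspondence between abelian $G \leq \SL_{n+1}(k)$ and cofinite sublattices $L_1 \leq L_0$ and the finiteness theorem of Nill--Ziegler are already cited and available.
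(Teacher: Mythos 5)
Your proof is correct and follows the paper's approach: translate ``$R\ast G$ has no higher preprojective cut'' to ``$P$ is hollow'' via \Cref{Cor: Cut exists iff internal point exists}, apply \Cref{Theo: NillZiegler}, and for part (1) bound $|G|$ using that exceptional hollow polytopes have bounded volume. You supply two details the paper's terse proof leaves implicit — the explicit computation that the normalized volume of $P$ equals $m = |\det B'|$, and the step from ``$m$ bounded'' to ``finitely many $G$'' via counting cofinite sublattices $L_1 \leq L_0$ of bounded index — both of which are correct and make the argument more complete.
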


\begin{proof}
    To prove the first statement, simply note that the proof of \cite[Theorem 1.1]{NillZiegler} shows that the volume of an exceptional hollow polytope is bounded and hence the order of an abelian group $G \leq \SL_{n+1}(k)$ giving rise to such an exceptional polytope is bounded as well. The second statement follows directly from \Cref{Cor: Cut exists iff internal point exists} and \Cref{Theo: NillZiegler}. 
\end{proof}

\begin{Rem}
    The \emph{width} (see e.g.\ \cite[Section 1.4]{NillZiegler}) of a lattice polytope does not decrease under lattice projection. It therefore follows from \Cref{Theo: NillZiegler} that the width of a hollow lattice polytope of a fixed dimension is bounded. However, the width of the simplex $P$ of types is not bounded, so in this sense a generic simplex we consider has an internal lattice point, which means that generically the algebra $kQ/I$ we consider admits a higher preprojective cut. 
\end{Rem}

The hollow lattice polytopes in dimension $2$ are known, and lead to the classification of $3$-preprojective algebras of type $\Tilde{A}$ as given in \cite{DramburgGasanova}. In higher dimensions, we are not aware of a complete classification of the exceptional hollow polytopes. Let us conclude by comparing the infinite families from \Cref{Theo: NillZiegler} and from Thibault's criterion \Cref{Theo: Thibaults Criterion}. 

\begin{Rem}
    Let $G \leq \SL_{n+1}(k)$ be abelian such that the embedding of $G$ factors through some embedding of $\SL_{n_1}(k) \times \SL_{n_2}(k)$ with $n_1 + n_2 = n+1$ and $n_1, n_2 >0$. Then it follows from \Cref{Theo: Thibaults Criterion} that the simplex $P$ for $G$ is hollow, and one can show that $P$ projects onto a lower dimensional hollow lattice polytope. However, this is not an equivalence: The group $G \simeq C_2 \times C_2 \times C_2$ embeds into $\SL_4(k)$ in a unique way so that the embedding does factor through a product $\SL_{n_1}(k) \times \SL_{n_2}(k)$. The resulting simplex $P$ of types of cuts projects onto the simplex $P'$ of types of cuts for the group $C_2 \times C_2$ embedded in $\SL_3(k)$. It is easy to compute that $P'$ is the unique exceptional hollow lattice triangle. It follows from \Cref{Theo: NillZiegler} that $P$ is hollow, but the embedding of $G$ does not factor.   
\end{Rem}

We therefore note that it would be interesting to find a more refined version of \Cref{Theo: Thibaults Criterion} that covers more of the infinite families of hollow polytopes. Ideally, this would give a purely algebraic description of the polytopes that appear in the second part of \Cref{Cor: No higher preprojective grading if exceptional or projecting}. In particular, the orders of the lattice generators $\alpha_1, \ldots, \alpha_{n+1}$ modulo $L_1$ may be used to describe the width of the lattice simplex. Since the width of a hollow lattice polytope is bounded, this might be used to characterise these infinite families of hollow simplices.

\section{Toric geometry}\label{Sec: Toric geometry}
In this section, we want to highlight the connections between higher preprojective cuts and toric geometry. When $n=1$, these observations are part of the classical McKay correspondence. When $n=2$, dimer models can be used and these observations were made by Nakajima in \cite{Nakajima}, based on the work of Ishii and Ueda \cite{IUDimerSpecial, IUDimerCrepant, IUModuliDimer}. We show that the same correspondence holds for any $n$: The lattice simplex $P$ of types of cuts is naturally isomorphic to the lattice simplex of junior elements in $G$, which in turn is the slice of the cone of the toric variety $X = \spec(R^G)$ at height $1$. For a general resource on the parts of toric geometry we use, we refer the reader to \cite[Chapter 1]{CLS}, and we will use the standard notation established in this book. In particular, that means that when we consider an affine toric variety $X$, we denote by $N$ the lattice of $1$-parameter subgroups and by $C \subseteq N_\mathbb{R}$ the cone defining $X$. As before, we assume that $G \leq \SL_{n+1}(k)$ is abelian and acts on the polynomial ring $R = k[x_1, \ldots, x_{n+1}]$. We assume without loss of generality that the action of $G$ is diagonalised, so that the invariant ring $R^G$ is generated by monomials. We begin with the following observation. 

\begin{Rem}
    The invariant ring $R^G$ embeds into $R \ast G$ via $r \to r \otimes 1$, and in this way we obtain precisely the center of the skew-group algebra $Z(R \ast G) = R^G$, which is true in more generality for any commutative domain $R$ and finite group $G$ \cite[Proposition 4.8]{MarcosMartinezVMartins}. Since $R^G$ is generated by monomials, it is the coordinate ring of an affine toric variety. The torus action on $X = \spec(R^G)$ is diagonal in the coordinates induced from the invariant monomials. We fix these coordinates. 
\end{Rem}

Next, we note that the center is automatically a graded subalgebra. 

\begin{Rem}
    Let $\Gamma = \bigoplus_{i \in \mathbb{Z}} \Gamma_i$ be a $\mathbb{Z}$-graded algebra, and let $z \in Z(\Gamma)$ be a central element. This is equivalent to $az=za$ for all $a \in \Gamma$, which is equivalent to $az=za$ for all homogeneous $a \in \Gamma_j$. Decompose $ z = \sum_{i \in \mathbb{Z}} z_i$ into its homogeneous components, and note that 
    \[ \sum_{i \in \mathbb{Z}} az_i = az = za = \sum_{i \in \mathbb{Z}} z_i a  \]
has $a z_i$ as its homogeneous component of degree $i+j$. But this component is also $z_i a$, and hence $z_i$ is central. Therefore, the center is generated by homogeneous elements, and hence is a graded subalgebra. When we refer to a grading of the center of a graded algebra, we are referring to this grading if not specified otherwise. 
\end{Rem}

To alleviate notation, we write monomials as $\mathbf{x}^e = \prod_{i = 1}^{n+1} x_i^{e_i}  \in R$, where $e \in \mathbb{N}^{n+1}$. 

\begin{Pro}\label{Pro: Degrees of central elements}
     Let $C$ be a periodic cut of type $\gamma$, and $\mathbf{x}^e \in R^G$ be an invariant monomial. Then the degree of $\mathbf{x}^e$ with respect to the grading induced by $C$ is
     \[ |\mathbf{x}^e| = \frac{\langle e, \gamma \rangle_{n+1}}{m}. \]
     In particular, two periodic cuts have the same type if and only if they induce the same grading on $R^G$. 
\end{Pro}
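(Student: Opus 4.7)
The plan is to use the quiver description of the element $\mathbf{x}^e \otimes 1$ and reduce the degree computation to the height function on $L_1$ from \Cref{Pro: Height function values on L1}. By \Cref{Rem: Quiver description of R G}, the element $x_i \otimes 1$ is the sum of all arrows of type $i$ in $Q$, so $\mathbf{x}^e \otimes 1$ is a sum of paths each containing exactly $e_i$ arrows of type $i$, for $i \in \{1,\ldots,n+1\}$. Since $\mathbf{x}^e$ is $G$-invariant, the vector $y = \sum_{i=1}^{n+1} e_i \alpha_i$ lies in $L_1$, so each such path lifts in $\hat{Q}$ to a path from some $x$ to $x+y$.

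The core step is to show that every path representing $\mathbf{x}^e \otimes 1$ has the same number of arrows in $C$, which will show homogeneity and compute the degree. Every such path has length $L = \langle e, \mathbf{1}\rangle_{n+1}$; if it contains $k$ cut arrows, its height increment is $L - (n+1)k$. By \Cref{Pro: Parallel paths have same height increment} and $L_1$-equivariance, the height increment depends only on $y$ and equals $h_\gamma(y)$. Rewriting $y$ in the basis $\alpha_1,\ldots,\alpha_n$ via $\alpha_{n+1} = -\sum_{i=1}^n \alpha_i$ gives coordinates $y' = (e_i - e_{n+1})_{1\leq i\leq n}$, and then \Cref{Pro: Height function values on L1} together with the identity $\sum_{i=1}^{n+1} \gamma_i = m$ yields, after a direct expansion,
\[
h_\gamma(y) = L - \frac{n+1}{m}\langle e,\gamma\rangle_{n+1}.
\]
Equating $L - (n+1)k = h_\gamma(y)$ gives $k = \frac{\langle e,\gamma\rangle_{n+1}}{m}$, which depends only on $e$ and $\gamma$ and not on the chosen path. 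Hence $\mathbf{x}^e \otimes 1$ is homogeneous of the claimed degree.

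For the second statement, one implication is immediate from the formula. For the converse, observe that $x_i^m$ is invariant since $m\alpha_i \in L_1$, and the formula gives $|x_i^m| = \gamma_i$; so if two cuts of types $\gamma$ and $\delta$ induce the same grading on $R^G$, specialising to $e = m\epsilon_i$ for each $i$ forces $\gamma_i = \delta_i$.

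The only mildly technical point is the expansion of $h_\gamma(y)$ after passing to the basis $\{\alpha_1,\ldots,\alpha_n\}$, because the asymmetry between $\alpha_{n+1}$ and the other $\alpha_i$ must be absorbed using $\langle\gamma,\mathbf{1}\rangle_{n+1} = m$ to re-symmetrise the answer; everything else is a direct application of results already in \Cref{Sec: Height functions}.
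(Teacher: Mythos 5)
Your proof is correct and follows essentially the same route as the paper: lift $\mathbf{x}^e \otimes 1$ to paths in $\hat{Q}$ ending at a point of $L_1$, equate the two expressions for the height increment (one counting cut arrows, the other via \Cref{Pro: Height function values on L1}), and recover $\gamma$ from the degrees of the invariants $x_i^m$. The only cosmetic difference is that you invoke \Cref{Pro: Parallel paths have same height increment} together with equivariance explicitly to justify that all representing paths contain the same number of cut arrows, which is a slightly more careful phrasing of the same point the paper makes.
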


\begin{proof}
    For every vertex $v \in Q_0$, denote by $c_v$ a cycle starting at $v$ and consisting of $e_i$ many arrows of type $i$. Then the element $\mathbf{x}^e$ corresponds via $R \ast G \simeq kQ/I$ to $\sum_{v \in Q_0} c_v$. We recall from \Cref{Pro: Height function values on L1} that each such cycle $c_v$ has the same degree, so it suffices to compute the degree for one such cycle. We fix without loss of generality the cycle $c = c_0$ which passes through $0$. Then we view $c$ instead on $\hat{Q}$ as a path $p$ of length $l = \langle e, \mathbf{1} \rangle_{n+1}$ from $0$ to $y$, where necessarily $y \in L_1$. It follows that 
    \[ (e_1 - e_{n+1}, e_2 - e_{n+1}, \ldots, e_n - e_{n+1}, 0 )  = e - e_{n+1} \mathbf{1} = (y, 0) . \] 
    We note that $h_C(y)$ can be computed as 
    \[ h_C(y) = l - (n+1) \cdot |c|, \]
    since $|c|$ is the number of cut arrows in $p$. Using \Cref{Pro: Height function values on L1}, we therefore have 
    \[ h_C(y) = \left\langle y,\mathbf{1}-\frac{n+1}{m}\gamma\right\rangle_{n} = l - (n+1) \cdot |c|.  \]
    Then we rewrite 
    \[\left\langle y,\mathbf{1}-\frac{n+1}{m}\gamma\right\rangle_{n} = \langle e - e_{n+1} \mathbf{1}, \mathbf{1} \rangle_n  - \frac{n+1}{m} \langle e - e_{n+1} \mathbf{1}, \gamma \rangle_n . \]
    Thus, we have 
    \begin{align*}
        |c| &= \frac{1}{n+1} \left(  \langle e, \mathbf{1} \rangle_{n+1} -  \langle e - e_{n+1} \mathbf{1}, \mathbf{1} \rangle_n  + \frac{n+1}{m} \langle e - e_{n+1} \mathbf{1}, \gamma \rangle_n   \right) \\
        &= \frac{1}{n+1} \left( e_{n+1} + \langle e - e + e_{n+1} \mathbf{1}, \mathbf{1} \rangle_{n} + \frac{n+1}{m} \langle e - e_{n+1} \mathbf{1}, \gamma \rangle_n   \right) \\
        &= \frac{1}{n+1} \left( e_{n+1} + n e_{n+1} + \frac{n+1}{m} (\langle e, \gamma \rangle_n - e_{n+1} \langle \mathbf{1}, \gamma \rangle_n )  \right) \\
        &= e_{n+1} + \frac{1}{m} \left( \langle e, \gamma \rangle_n - e_{n+1} \langle \mathbf{1}, \gamma \rangle_n \right) \\
        &= e_{n+1} + \frac{1}{m} \left( \langle e, \gamma \rangle_n - e_{n+1} (m - \gamma_{n+1}) \right) \\
        &= \frac{1}{m} \langle e, \gamma \rangle_{n+1},
    \end{align*}
    which proves our claim. Note that the formula for the degree of $\mathbf{x}^e$ only depends on the type of the cut, and that $\gamma$ can be reconstructed from the degrees of the pure powers in $R^G$, so two cuts define the same grading if and only if they have the same type. 
\end{proof}

The above proposition therefore means that we can identify the types of cuts with certain gradings on $R^G$. Since we are in the toric case, this has an especially nice interpretation. 

\begin{Rem}
    Note that $\mathbb{Z}$-gradings of a coordinate ring $k[X]$ correspond to $k^\ast$-actions on the affine variety $X = \spec(R[X])$ by \cite[Proposition 4.7.3]{SGA3}. In detail, a $\mathbb{Z}$-grading on $k[X]$ is equivalent to a coaction morphism $k[X] \to k[X] \otimes_k k[t, t^{-1}]$ for the ring of Laurent polynomials. Taking the spectrum on both sides then gives an action of $k^\ast = \spec(k[t, t^{-1}])$ on $X$. Since our variety $X = \spec(R^G)$ is toric, with a fixed torus $\mathbb{T}$, and because the generating monomials of $R^G$ are homogeneous for the gradings we consider, we obtain that these gradings on $R^G$ correspond to $1$-parameter subgroups of $(X, \mathbb{T})$. Furthermore, $X$ is defined by the cone $C \subseteq N_\mathbb{R}$, where $N = \Hom(k^\ast, \mathbb{T})$ is the lattice of $1$-parameter subgroups. We note that this lattice is isomorphic to the lattice we considered in \Cref{SSec: Hollow lattice polytopes}. 
\end{Rem}

\begin{Pro} \label{Pro: Polytopes agree}
    Let $P$ be the lattice simplex of types of cuts on $R\ast G \simeq kQ/I$, and $N$ the lattice of $1$-parameter subgroups of the toric variety $X = \spec(R^G)$. Then the lattice simplex in $N$ given by intersecting the defining cone $C \subseteq N_\mathbb{R}$ with the hyperplane corresponding to the degree condition $|\mathbf{x}^\mathbf{1}| = |\prod_{i = 1}^{n+1} x_i | = 1$, is isomorphic to $P$.  
\end{Pro}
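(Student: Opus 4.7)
The plan is to verify the isomorphism explicitly via the map $\gamma \mapsto v_\gamma := \tfrac{1}{m}\gamma$, viewed inside $N_0 \otimes \mathbb{Q} = \mathbb{Q}^{n+1}$. By \Cref{Pro: Degrees of central elements}, a cut of type $\gamma$ induces on $R^G$ the grading $|\mathbf{x}^e| = \tfrac{\langle e,\gamma\rangle_{n+1}}{m}$, which under the duality cited above corresponds precisely to the $1$-parameter subgroup $v_\gamma$. So the task reduces to showing that $\gamma \mapsto v_\gamma$ restricts to a lattice isomorphism between $P$ and the slice $C \cap N \cap \{\langle \cdot,\mathbf{1}\rangle = 1\}$.

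First, I would identify the toric data of $X$ explicitly. Let $M_0 = \mathbb{Z}^{n+1}$ with standard basis $e_1,\ldots,e_{n+1}$ be the character lattice of $(k^\ast)^{n+1}$; then the character lattice of $\mathbb{T}_X$ is $M = \Ker(M_0 \twoheadrightarrow \hat{G}),\ e_i \mapsto \rho_i$, and the cocharacter lattice is $N = \{v \in \mathbb{Q}^{n+1} \mid \langle v,e\rangle_{n+1} \in \mathbb{Z} \text{ for all } e \in M\}$. Since $R^G$ is generated by monomials $\mathbf{x}^e$ with $e \in M$ and $e_i \geq 0$, the defining cone $C \subseteq N_\mathbb{R}$ is dual to the non-negative orthant $\mathbb{R}_{\geq 0}^{n+1}$, hence $C = \{v \in N_\mathbb{R} \mid v_i \geq 0\}$. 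Finally, $\mathbf{x}^{\mathbf{1}} = x_1\cdots x_{n+1}$ is $G$-invariant because $G \leq \SL_{n+1}(k)$, so $\mathbf{1} \in M$ and the slice condition $\langle v, \mathbf{1}\rangle_{n+1} = 1$ is well-defined.

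Next, I would match the lattice $N$ with the divisibility condition of \Cref{Theo: Divisibility conditions}. The surjection $M_0 \twoheadrightarrow L_0,\ e_i \mapsto \alpha_i$ (with kernel $\mathbb{Z}\mathbf{1}$) restricts to a short exact sequence $0 \to \mathbb{Z}\mathbf{1} \to M \to L_1 \to 0$. Lifting the columns of $B'$ to $M$ via $(y_1,\ldots,y_n) \mapsto (y_1,\ldots,y_n,0)$, and combining with $\mathbf{1}$, one obtains an explicit generating set of $M$. Using the block form of $B$, a direct computation shows that $v \in N$ if and only if $vB \in \mathbb{Z}^{1\times(n+1)}$, i.e.\ $N = \mathbb{Z}^{1\times(n+1)} B^{-1}$. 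Under $\gamma = m v_\gamma$, membership $v_\gamma \in N$ becomes $\gamma B \in m \mathbb{Z}^{1\times(n+1)}$, which is exactly \Cref{Theo: Divisibility conditions}. The remaining conditions translate directly: $v_\gamma \in C \Leftrightarrow \gamma_i \geq 0$ and $\langle v_\gamma,\mathbf{1}\rangle_{n+1} = 1 \Leftrightarrow \sum \gamma_i = m$. For the reverse direction, given $v$ in the slice, the fact that $m e_i \in M$ (since $\rho_i$ has order dividing $|G| = m$) forces $\gamma := mv \in \mathbb{Z}^{n+1}$, and the above translations show $\gamma \in P$.

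The main technical step is the identification $N = \mathbb{Z}^{1\times(n+1)} B^{-1}$, which requires carefully describing $M$ as generated by the lift $\mathbf{1}$ of $0 \in L_1$ together with lifts of the columns of $B'$, and then unpacking the definition of $B$ to see that the integrality of the pairings assembles into the matrix condition $vB \in \mathbb{Z}^{n+1}$. Beyond this bookkeeping, the remaining verifications are simple translations between type data on the cut side and cone/slice data on the toric side.
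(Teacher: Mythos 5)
Your proposal is correct and follows the same conceptual route as the paper: both use \Cref{Pro: Degrees of central elements} to transport a type $\gamma$ to the induced grading on $R^G$, hence to a $1$-parameter subgroup, and then check the slice and cone conditions. The difference is one of explicitness rather than strategy. The paper's proof is terse: it checks that types land on the hyperplane $|\mathbf{x}^{\mathbf{1}}|=1$, argues cone membership via convergence of the $1$-parameter subgroup, and leaves the lattice identification $N \cong m^{-1}\,(\text{lattice of types})$ to the remark preceding the proposition (where $N=m\mathbb{Z}^{1\times(n+1)}B^{-1}$ is asserted). You instead reconstruct the toric data $(M,N,C)$ from scratch -- computing $M=\Ker(M_0\twoheadrightarrow\hat G)$, showing $M$ is generated by the columns of $B$, and deducing $N=\mathbb{Z}^{1\times(n+1)}B^{-1}$ -- and you also make the reverse inclusion (every lattice point in the slice is a type) explicit via $m e_i\in M$, which the paper does not spell out. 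Your version is a sound and slightly more self-contained presentation of the same argument; nothing is missing or wrong.
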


\begin{proof}
    Using \Cref{Pro: Degrees of central elements}, we identify types with the induced gradings, and with lattice points in $N$. Under this identification, we have for any type $\gamma$ that $|\mathbf{x}^\mathbf{1}| = \frac{\langle \mathbf{1}, \gamma \rangle_{n+1} }{m} = 1$, so the types lie on the hyperplane specified by this degree condition. To see that the lattice point $\lambda$ corresponding to a type lies in the cone $C$, it suffices to check that $\lambda$, seen as a $1$-parameter subgroup, converges. But this is immediate since the grading assigns non-negative degrees $d_i \geq 0$ to any invariant monomial, so in appropriately chosen coordinates, $\lambda$ acts by multiplication by $t^{d_i}$ for $t \in k^\ast$. 
\end{proof}

With these gradings, the Gorenstein ring $R^G$ becomes graded so that its $a$-invariant is $a= -1$. This corresponds precisely to the condition in \Cref{Theo: HPG is f.d. GP1} that $R \ast G$ has Gorenstein parameter $1$. 

\begin{Pro}
    Let $R^G$ be as before, graded by some cut. Then the graded canonical module of $R^G$ is the ideal generated by $(\Pi_{i= 1}^{n+1} x_i)$. In particular, as a graded Gorenstein ring, $R^G$ has $a$-invariant $a= -1$.
\end{Pro}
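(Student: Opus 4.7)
The plan is to compute the graded canonical module directly via Watanabe's theorem, after extending the cut grading to a $\mathbb{Q}$-grading on $R$.

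First, note that $x_1 \cdots x_{n+1} \in R^G$: each $g \in G \leq \SL_{n+1}(k)$ acts on this monomial by multiplication by $\det(g) = 1$. By \Cref{Pro: Degrees of central elements} applied to $e = \mathbf{1}$, its degree in the cut-induced grading on $R^G$ equals $\tfrac{\langle \mathbf{1}, \gamma \rangle_{n+1}}{m} = 1$. This will be the candidate generator of the canonical module.

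Next I would establish the Gorenstein structure. Any pseudoreflection in $\GL_{n+1}(k)$ has determinant equal to its unique non-identity eigenvalue, so $G \leq \SL_{n+1}(k)$ contains no non-trivial pseudoreflections. Watanabe's theorem then gives that $R^G$ is Gorenstein and
\[
\omega_{R^G} \cong (\omega_R)^G,
\]
where the $G$-action on $\omega_R = R \cdot \sigma$, $\sigma = dx_1 \wedge \cdots \wedge dx_{n+1}$, is via the determinant character and hence trivial. Thus $\omega_{R^G}$ is a free $R^G$-module of rank one.

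To pin down the graded shift, I would extend the cut grading on $R^G$ to a $\mathbb{Q}$-grading on $R$ by declaring $|x_i| = \tfrac{\gamma_i}{m}$. This is $G$-equivariant because $G$ acts diagonally on the $x_i$, and it restricts to the original cut grading on $R^G$ by \Cref{Pro: Degrees of central elements}. Both $\sigma$ and $x_1 \cdots x_{n+1}$ are then homogeneous of $\mathbb{Q}$-degree $\sum_i \tfrac{\gamma_i}{m} = 1$, so the $R$-linear map
\[
\omega_R \to R, \quad \sigma \mapsto x_1 \cdots x_{n+1}
\]
is an injective graded homomorphism with image the principal ideal $(x_1 \cdots x_{n+1})R$. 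Taking $G$-invariants, and using that $x_1 \cdots x_{n+1} \in R^G$, yields an isomorphism $\omega_{R^G} \cong (x_1 \cdots x_{n+1})R^G$ of graded $R^G$-modules. Since the generator has cut-degree $1$, this ideal is isomorphic to $R^G(-1)$, whence $a(R^G) = -1$.

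The main obstacle is the graded identification in the last step: one has to choose the isomorphism $\omega_R \to R$ sending $\sigma$ to $x_1 \cdots x_{n+1}$ rather than the naive $\sigma \mapsto 1$, so that after taking $G$-invariants the image lies in $R^G$ with the correct shift. The $\mathbb{Q}$-grading trick provides a framework in which $\sigma$ has a well-defined degree, even though individual $x_i$'s are not homogeneous elements of $R \ast G$ under the cut grading.
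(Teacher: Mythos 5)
Your proof is correct, and it takes a genuinely different route from the paper's. The paper establishes the Gorenstein property via Watanabe (as you do), but then identifies the graded canonical module using the Danilov--Stanley description of canonical modules of normal affine semigroup rings (Bruns--Herzog, Theorem 6.3.5): $\omega$ is the ideal generated by monomials in the interior of the dual cone, which for $G \leq \SL_{n+1}$ is principal with generator $\prod_i x_i$; the degree-$1$ claim then follows from \Cref{Pro: Degrees of central elements}. You instead stay on the invariant-theory side: you invoke Watanabe's isomorphism $\omega_{R^G} \cong (\omega_R)^G$ (valid since $G \leq \SL_{n+1}$ contains no non-trivial pseudoreflections), and then track the grading by extending the cut grading to a $G$-equivariant $\mathbb{Q}$-grading on $R$ via $|x_i| = \gamma_i/m$. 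The $\mathbb{Q}$-grading device is the key insight that makes your approach work: it gives the top form $\sigma$ a well-defined degree $\sum_i \gamma_i/m = 1$ even though the $x_i$ themselves are not homogeneous in $R \ast G$, and the degree-preserving $G$-equivariant map $\sigma \mapsto \prod_i x_i$ descends on invariants to the desired isomorphism $\omega_{R^G} \cong (\prod_i x_i) R^G \cong R^G(-1)$. The paper's route is shorter if one is comfortable with the toric machinery and exhibits the canonical module as an honest ideal without a choice; yours is more self-contained within invariant theory and makes the graded bookkeeping explicit. Both are sound.
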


\begin{proof}
    The fact that $R^G$ is Gorenstein is well known, since $G \leq \SL_{n+1}(k)$ \cite{Watanabe}. In particular, the graded canonical module $\omega$ is generated by a single element. By \cite[Theorem 6.3.5]{BrunsHerzog}, $\omega$ is generated by the monomials in the interior of $C^\vee$, and in our case it is clear that this is generated by $(\Pi_{i= 1}^{n+1} x_i)$. By \Cref{Pro: Degrees of central elements}, we have that $|\Pi_{i= 1}^{n+1} x_i| = 1$ for any cut, and therefore $\omega(-1) \simeq R$. Hence $a = -1$. 
\end{proof} 

For completeness, we give another interpretation more directly in line with the McKay correspondence: The types of cuts correspond precisely to junior elements in $G$. This brings us back to the original construction from \cite{AIR}, since we made explicit use of the juniors in \Cref{Pro: Decreasing arrows cut} and hence in \Cref{Pro: Cut constr} to construct our cuts. 

\begin{Rem}\label{Rem: Ito-Reid interpretation}
    In \cite[Section 2.2]{ItoReid}, a different parametrisation of the lattice of $1$-parameter subgroups is chosen. To do this, we recall that we have diagonalised the action of $G$, so that every element can be written as $g = \frac{1}{|g|}(e_1, \ldots, e_{n+1})$. Of course, this notation depends on a choice of root of unity, but if we fix the diagonalisation and the root of unity, we can identify $N$ with the lattice $N_G$ generated by $\mathbb{Z}^{n+1} + \{ \frac{1}{|g|}(e_1, \ldots, e_{n+1})^t \mid g \in G \} $ in $\mathbb{Q}^{n+1}$, where we now view $\frac{1}{|g|}(e_1, \ldots, e_{n+1})^t$ as a column vector with rational entries. An element $g = \frac{1}{|g|}(e_1, \ldots, e_{n+1})$ is called \emph{junior} if $\frac{1}{|g|}(e_1 +  \ldots + e_{n+1}) = 1$, and our simplex $P$ of types of cuts is then isomorphic to the lattice simplex of junior elements in $N_G$. Note that this means that the \emph{internal} points of $P$ correspond to the junior elements of $G$.  
\end{Rem}

We conclude by pointing out that by general toric geometry, the lattice vectors in $P$ respectively the vectors of junior elements in $N_G$ correspond to crepant divisors. 

\begin{Rem}
    Let $N_G$ be as above, and denote $C_G \subseteq (N_G)_\mathbb{R}$ the cone defining $X=\spec(R^G)$. By \cite{ItoReid}, the lattice points in $C_G$ corresponding to junior elements in $G$ are in bijection with crepant exceptional prime divisors of any resolution of $X$. Furthermore, it was shown in \cite[Section 3.2, Section 6]{YamagishiIAbelian} that the rays generated by these lattice vectors in $C_G$ correspond to exceptional crepant divisors in a relative minimal model.
\end{Rem}
   
\section{Mutation lattices}\label{Sec: Mutation lattices}
In this section, we recall mutation of cuts, show that cuts of a given type can be turned into a finite distributive lattice whose cover relations coincide with (nonzero) mutations, and hence that any two cuts of the same type are related by a sequence of mutations. We also give an explicit construction of the maximal element in the lattice. As before, we fix a cofinite $L_1 \xhookrightarrow{B'} L_0$ and $m = |L_0/L_1|$. 

\subsection{Mutation}
In this subsection, we provide a summary of cut mutation and its relevance to the $n$-representation infinite algebras we consider. This mutation is a higher version of the reflection functors of Berstein, Gel'fand and Ponomarev \cite{BGP}. In modern language, these functors are rephrased via Auslander-Platzeck-Reiten tilting modules \cite{APR}, and we refer to $n$-APR tilting as introduced in \cite{IyamaOppermann}. We fix a cut $C$ on $Q$, and recall that $Q_C = (Q_0, Q_1 - C)$.

\begin{Con}
    We call a vertex $x$ in $Q_C$ \emph{mutable} if it is a source or a sink in $Q_C$. If $x$ is a source, that means that all arrows $a \in Q_1$ with $t(a) = x$ are in $C$. We note that all arrows $a \in Q_1$ with $s(a) = x$ are not in $C$, since otherwise we could complete such an $a \in C$ into an elementary cycle, whose last arrow is also in $C$ by assumption. Therefore, we can construct a new cut  
    \[ \mu_x(C) = \{ a \in C \mid  s(a) \neq x  \} \cup \{ a \in Q \mid t(a) = x \}  \]
    which we call the \emph{mutation of $C$ at $x$}. Mutation at a sink is defined dually. We further call this a \emph{nonzero} mutation if $x \neq 0$. We also use the same terminology for the infinite cut quiver $\hat{Q}_C$. In particular, when mutating at a source $x$ in $\hat{Q}_C$, we mutate at all points $x + L_1$ simultaneously to preserve periodicity.  
\end{Con}

Clearly, mutation at a source $x$ in $Q_C$ turns it into a sink in $Q_{\mu_x(C)}$ and vice versa. 

\begin{Rem}
    Cut mutation as defined above is interesting because it corresponds to $n$-APR tilting of the $n$-representation infinite algebra $kQ_C/I_C$, as defined in \cite{IyamaOppermann}. In particular, cut mutation from $C$ to $\mu_x(C)$ gives rise to a derived equivalence between the $n$-representation infinite algebras $kQ_C /I_C$ and $kQ_{\mu_x(C)} /I_{\mu_x(C)}$.  
\end{Rem}

We also note that nonzero mutations have very nice effect on height functions. 

\begin{Rem}
\label{Rem: mutations vs heights}
    Let $C$ be a periodic cut on $\hat{Q}$, and $h_C$ the associated height function. If $0 \neq s \in \hat{Q}_C$ is a source then 
    \[ h_{\mu_s(C)} (x) = \begin{cases}
        h_C(x) + n+1, & x - s \in L_1 \\
        h_C(x), & \text{else}.
    \end{cases} \]
\end{Rem}

\subsection{Lattices}
The goal of this section is to show that for any type $\gamma$, the set $\{ C\mid C \text{ is } L_1 \text{-periodic and } \theta(C)=\gamma\}$ is a finite distributive lattice whose cover relations are given by nonzero mutations. This insight comes from the heuristic that we normalised height functions to $h_C(0) = 0$, so we ``should not'' mutate at $0$. As a corollary, we show that any two cuts of the same type are related by a sequence of mutations. As before, we fix a cofinite $L_1 \xhookrightarrow{B'} L_0$ and a type $\gamma$. Furthermore, we enumerate the elements of $L_0/L_1$ by $\{1, \ldots, m\} $ to fix an isomorphism $\mathbb{Z}^{|L_0/L_1|} \simeq \mathbb{Z}^m$. The relevant background on poset and lattice theory can be found in \cite[Chapter 3]{Stanley}, and we use the standard notation established there. The following is well known. 

\begin{Pro}
 $(\mathbb{Z}^m,\le)$ is a distributive lattice with $x\vee y=\max(x,y)$ and $x\wedge y=\min(x,y)$, where $\leq$ and $\max$ and $\min$ are taken componentwise.   
\end{Pro}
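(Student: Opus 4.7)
The plan is to reduce everything to the componentwise structure. First I would verify that the componentwise order $\le$ is a partial order on $\mathbb{Z}^m$, which is immediate from reflexivity, antisymmetry, and transitivity of $\le$ on $\mathbb{Z}$. Next, I would check that for $x,y \in \mathbb{Z}^m$, the element $\max(x,y)$ defined by $(\max(x,y))_i = \max(x_i, y_i)$ is indeed the least upper bound: it is an upper bound of $\{x,y\}$ componentwise, and if $z \ge x$ and $z \ge y$ then $z_i \ge \max(x_i,y_i)$ for each $i$, so $z \ge \max(x,y)$. Dually for $\min$.

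The cleanest way to see distributivity is to note that $(\mathbb{Z},\le)$ is a chain, hence a distributive lattice (in any chain, $a \wedge (b \vee c) = (a \wedge b) \vee (a \wedge c)$ by a straightforward case analysis on the relative order of $a,b,c$). Since $(\mathbb{Z}^m, \le)$ is the direct product of $m$ copies of $(\mathbb{Z},\le)$, and the join and meet operations are computed componentwise, the identity
\[ x \wedge (y \vee z) = (x \wedge y) \vee (x \wedge z) \]
holds componentwise, hence holds in $\mathbb{Z}^m$.

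The main obstacle is essentially nonexistent here, since the result is standard; the only thing worth being careful about is cleanly separating the two claims (that the structure is a lattice, and that it is distributive) and invoking the fact that a product of distributive lattices is distributive. I would phrase the argument in two or three lines, noting that $(\mathbb{Z},\le)$ is a chain, observing that chains are distributive lattices, and concluding by the fact that the product of distributive lattices is a distributive lattice with componentwise operations.
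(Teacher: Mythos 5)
Your proof is correct. The paper gives no proof at all here — it simply states the proposition with the remark that it is well known — so there is no in-paper argument to compare against. Your argument (componentwise order is a partial order, componentwise $\max$/$\min$ are the join/meet, and distributivity holds because $(\mathbb{Z},\le)$ is a chain and a finite product of distributive lattices is distributive) is the standard way to justify the claim, and it is complete.
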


We construct a function which generalises the relative height function of two perfect matchings in a dimer model. 

\begin{Pro}
 Let $C_1$ and $C_2$ be two $L_1$-periodic cuts of type $\gamma$ and let $h_{C_1}$ and $h_{C_2}$ be the corresponding height functions. Then $\frac{h_{C_1}-h_{C_2}}{n+1}$ is an integer valued $L_1$-periodic function on $L_0$.   
\end{Pro}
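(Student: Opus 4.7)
The plan is to prove the two assertions separately: $L_1$-periodicity of $h_{C_1}-h_{C_2}$, and divisibility of this difference by $n+1$ at every lattice point.

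First I would establish $L_1$-periodicity. Since $h_{C_1}$ and $h_{C_2}$ are both $L_1$-equivariant height functions, for any $x \in L_0$ and $y \in L_1$ we have
\[ (h_{C_1}-h_{C_2})(x+y) = (h_{C_1}(x)-h_{C_2}(x)) + (h_{C_1}(y)-h_{C_2}(y)). \]
The key input is \Cref{Pro: Height function values on L1}: the restriction of $h_{C_i}$ to $L_1$ equals $h_\gamma$, and hence depends only on the common type $\gamma$. Therefore $h_{C_1}(y) = h_\gamma(y) = h_{C_2}(y)$ for every $y\in L_1$, and the second bracket vanishes. This gives the desired invariance $(h_{C_1}-h_{C_2})(x+y) = (h_{C_1}-h_{C_2})(x)$, which immediately implies that the rescaled function $\frac{h_{C_1}-h_{C_2}}{n+1}$ is also $L_1$-periodic.

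Second, I would establish integrality by a modular argument. For any cut $C$ and arrow $a \in \hat{Q}_1$ the definition gives $h_C(a)\in\{1,-n\}$; both values are congruent to $1$ modulo $n+1$. Consequently, for any path $p = a_1\cdots a_\ell$ in $\hat{Q}$ the height increment satisfies
\[ h_C(p) = \sum_{i=1}^{\ell} h_C(a_i) \equiv \ell \pmod{n+1}. \]
Fixing a path $p_x$ from $0$ to $x$ of length $\ell$, and applying this to both $C=C_1$ and $C=C_2$, we get $h_{C_1}(x) \equiv \ell \equiv h_{C_2}(x) \pmod{n+1}$. Hence $(n+1) \mid (h_{C_1}(x)-h_{C_2}(x))$ for every $x\in L_0$, so $\frac{h_{C_1}-h_{C_2}}{n+1}$ is integer-valued.

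Neither step looks delicate: the periodicity reduces cleanly to the fact that $h_C|_{L_1}$ only sees the type (already proved), and divisibility by $n+1$ comes for free from the two possible values $\{1,-n\}$ of $h_C$ on arrows, both congruent to $1$ mod $n+1$. If there is any point that requires care, it is merely checking that $p_x$ can be chosen (which is \Cref{Rem: strongly connected}) and that the congruence $h_C(p_x)\equiv\ell$ is independent of the chosen path, which is guaranteed by \Cref{Pro: Parallel paths have same height increment}.
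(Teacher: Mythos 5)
Your proof is correct and follows the same structure as the paper's: periodicity via $L_1$-equivariance plus the fact that $h_C|_{L_1}=h_\gamma$ depends only on the type, and integrality via the observation that both possible arrow increments $1$ and $-n$ are congruent to $1$ modulo $n+1$. The only difference is that the paper compresses the integrality step into "it is clear," whereas you spell out the modular argument explicitly — a reasonable expansion of the same idea.
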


\begin{proof}
 It is clear that $(n+1) \mid (h_{C_1}(x)-h_{C_2}(x))$ for any $x\in L_0$ and any pair of cuts regardless of their types. Hence $\frac{h_{C_1}-h_{C_2}}{n+1}$ is integer valued. Let $x\in L_0$ and $y\in L_1$. Since $\theta(C_1)=\theta(C_2)=\gamma$, we know from \Cref{Pro: Height function values on L1} that $h_{C_1}(y)=h_{C_2}(y)=h_\gamma(y)$. Therefore: 
 \begin{align*}
  (h_{C_1}-h_{C_2})(x+y)&=h_{C_1}(x+y)-h_{C_2}(x+y)\\&=
 h_{C_1}(x)+h_{C_1}(y)-h_{C_2}(x)-h_{C_2}(y)\\&=h_{C_1}(x)+h_{\gamma}(y)-h_{C_2}(x)-h_{\gamma}(y)\\&=h_{C_1}(x)-h_{C_2}(x)=(h_{C_1}-h_{C_2})(x),   \end{align*}
 which shows $L_1$-periodicity of $\frac{h_{C_1}-h_{C_2}}{n+1}$.
\end{proof}

\begin{Con}
    Let $C_0$ be an $L_1$-periodic cut of type $\gamma$. We will fix it and call it the reference cut. Then for any $L_1$-periodic cut $C$ of type $\gamma$, the function 
    \[ \overline{h}_C = \overline{h}_{C, C_0} \colon L_0/L_1 \to \mathbb{Z}, x + L_1 \mapsto \frac{h_{C}(x)-h_{C_0}(x)}{n+1} \]
    is well-defined and called the \emph{relative height function of $C$ with respect to $C_0$}. We suppress the dependence on $C_0$ whenever possible. 
    Therefore, any $C$ can be identified with the corresponding vector $v_C = (\overline{h}_{C} (x+L_1))_{x+L_1 \in L_0/L_1}  \in \mathbb{Z}^m$. Clearly, the assignment $C \mapsto v_C \in \mathbb{Z}^m$ is injective and this way the set of all $L_1$-periodic cuts of type $\gamma$ becomes a subset of $\mathbb{Z}^m$.
\end{Con}

We will now show that the set of vectors $v_C$ is closed under taking $\min$ and $\max$, in other words that this subset is a sublattice of $\mathbb{Z}^m$.    

\begin{Pro}\label{Pro: Min and max of cuts}
Let $C_1$ and $C_2$ be two $L_1$-periodic cuts of type $\gamma$ and let $h_{C_1}$ and $h_{C_2}$ be the corresponding height functions. Then the functions $h_{\min}=\min(h_{C_1},h_{C_2})$ and $h_{\max}=\max(h_{C_1},h_{C_2})$, are $L_1$-equivariant height functions. Moreover, $C_{h_{\min}}$ and $C_{h_{\max}}$ have type $\gamma$.
\end{Pro}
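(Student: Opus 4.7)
The plan is to verify the three axioms of an $L_1$-equivariant height function for $h_{\min}$ (the case of $h_{\max}$ being entirely symmetric) and then invoke \Cref{Cor: type from L1 height}. The normalization $h_{\min}(0) = 0$ is immediate because $h_{C_1}(0) = h_{C_2}(0) = 0$, and $L_1$-equivariance is a quick consequence of $h_{C_1}$ and $h_{C_2}$ agreeing on $L_1$ with $h_\gamma$ (by \Cref{Pro: Height function values on L1}): for $y \in L_1$ we get
\[ h_{\min}(x+y) = \min(h_{C_1}(x) + h_\gamma(y), h_{C_2}(x) + h_\gamma(y)) = h_{\min}(x) + h_\gamma(y), \]
and in particular $h_{\min}|_{L_1} = h_\gamma$, which at the same time settles that $C_{h_{\min}}$ has type $\gamma$ via \Cref{Cor: type from L1 height}.

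The main point, and the only place where something has to be checked, is the increment condition: for every $x \in L_0$ and every $\alpha_i$,
\[ h_{\min}(x+\alpha_i) - h_{\min}(x) \in \{1, -n\}. \]
The key input is the observation that $h_{C_1}(x) - h_{C_2}(x)$ is always a multiple of $n+1$ (which follows at once by computing both heights along a common path from $0$ to $x$ and noting that any two heights of the form $k - (n+1)c_i$ differ by $(n+1)(c_2 - c_1)$). Combined with the fact that the individual increments lie in $\{1, -n\}$, the increment of the difference $(h_{C_1} - h_{C_2})(x+\alpha_i) - (h_{C_1} - h_{C_2})(x)$ lies in $\{0, n+1, -(n+1)\}$.

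With these facts in hand the verification is a short case analysis on the signs of $(h_{C_1} - h_{C_2})(x)$ and $(h_{C_1} - h_{C_2})(x+\alpha_i)$. When both have the same sign, $h_{\min}$ follows whichever of $h_{C_1}, h_{C_2}$ realizes the minimum on the arrow and the condition is automatic. In the crossover cases, divisibility by $n+1$ forces the situation to be as tight as possible: one of the two differences must be $0$ and the other $\pm(n+1)$. In each such case one checks directly that the minimum picks out a well-defined increment of either $+1$ or $-n$. I expect this case analysis to be the only place that requires care; once it is carried out, the rest of the statement follows formally and the same argument applied to $h_{\max}$ finishes the proof.
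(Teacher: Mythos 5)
Your proposal is correct and follows essentially the same strategy as the paper: verify normalization, handle the increment condition by a case analysis hinging on the observation that $h_{C_1}(x)-h_{C_2}(x)$ is always a multiple of $n+1$, establish $L_1$-equivariance from $h_{C_1}|_{L_1}=h_{C_2}|_{L_1}=h_\gamma$, and invoke \Cref{Cor: type from L1 height} for the type. The one thing to tighten is the phrase ``one of the two differences must be $0$ and the other $\pm(n+1)$'': what the divisibility observation together with your bound on $(h_{C_1}-h_{C_2})(x+\alpha_i)-(h_{C_1}-h_{C_2})(x)\in\{0,\pm(n+1)\}$ actually forces is that the two differences cannot have strictly opposite signs at all (a strict sign change would require a jump of at least $2(n+1)$); in the remaining ``tied'' cases with a zero difference at one endpoint the increment is realized by whichever of $h_{C_1},h_{C_2}$ the minimum follows, and is automatically in $\{1,-n\}$. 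With that wording fixed, your argument matches the paper's, which reaches the same conclusion by assuming WLOG $h_{C_1}(x)\le h_{C_2}(x)$ and deriving $0<h_{C_2}(x)-h_{C_1}(x)<n+1$ as a contradiction in the problematic case.
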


\begin{proof}
    We will only prove the claim for $h_{\min}$ since the proof for $h_{\max}$ is analogous. It is obvious that $h_{\min}(0)=0$. Secondly, we will show that $h_{\min}(x+\alpha_i)\in\{h_{\min}(x)+1, h_{\min}(x)-n\}$ for any $x\in L_0$ and any $i\in \{1,\ldots, n+1\}$. Fix such an $x$ and such an $i$. We will assume without loss of generality that $h_{C_1}(x)\le h_{C_2}(x)$. If  $h_{C_1}(x)=h_{C_2}(x)$, we are done since $h_{\min}$, restricted to points $x$ and $x+\alpha_i$, equals either $h_{C_1}$ or $h_{C_2}$, restricted to the same pair of points. Therefore we will further assume $h_{C_1}(x)<h_{C_2}(x)$. If $h_{C_1}(x+\alpha_i)\le h_{C_2}(x+\alpha_i)$, we are done again by the same argument as before. Therefore, the only interesting case is $h_{C_1}(x)<h_{C_2}(x)$ and  $h_{C_1}(x+\alpha_i)> h_{C_2}(x+\alpha_i)$. These two conditions imply $h_{C_1}(x+\alpha_i)=h_{C_1}(x)+1$ and $h_{C_2}(x+\alpha_i)=h_{C_2}(x)-n$. But then $h_{C_1}(x)<h_{C_2}(x)=h_{C_2}(x+\alpha_i)+n<h_{C_1}(x+\alpha_i)+n=h_{C_1}(x)+1+n$, which in particular gives $h_{C_1}(x)<h_{C_2}(x)<h_{C_1}(x)+1+n$, in other words, $0<h_{C_2}(x)-h_{C_1}(x)<n+1$. This is a contradiction since for any pair of cuts $C_1$ and $C_2$ and for any point $x\in L_0$ we have $(n+1)|(h_{C_2}(x)-h_{C_1}(x))$.
 
    Next, we need to show equivariance of $h_{\min}$. Let $x\in L_0$ and $y\in L_1$. Since $\theta(C_1)=\theta(C_2)=\gamma$, we know from \Cref{Pro: Height function values on L1} that $h_{C_1}(y)=h_{C_2}(y)=h_\gamma(y)$. Therefore, \begin{align*}
        h_{\min}(x+y)&=\min(h_{C_1}(x+y),h_{C_2}(x+y))\\&=\min(h_{C_1}(x)+h_\gamma(y), h_{C_2}(x)+h_\gamma(y))\\&=\min(h_{C_1}(x),h_{C_2}(x))+h_\gamma(y)\\&=h_{\min}(x)+h_{\min}(y).
    \end{align*}
    Finally, the height function $h_{\min}$ corresponds to a cut of type $\gamma$, which follows from \Cref{Cor: type from L1 height}. 
\end{proof}

The above clearly translates to relative height functions and hence to the vectors $v_C$. Note that there are finitely many $L_1$-periodic cuts of a given type, hence we obtain the following.

\begin{Cor} \label{Cor: Cuts are lattice}
    The set $\{ v_C \mid C \text{ is an } L_1\text{-periodic cut, } \theta(C) = \gamma \}$ is a finite sublattice of $(\mathbb{Z}^m, \leq)$. Therefore, it is a finite distributive lattice.  
\end{Cor}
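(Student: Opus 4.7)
The plan is to deduce the corollary directly from \Cref{Pro: Min and max of cuts} by translating the conclusion about height functions into a statement about the associated vectors $v_C \in \mathbb{Z}^m$. First I would observe that for any two $L_1$-periodic cuts $C_1, C_2$ of type $\gamma$, the function $(h_{C_1} - h_{C_0})/(n+1)$ is defined pointwise, so taking componentwise minima commutes with the subtraction of $h_{C_0}$ and division by $n+1$. That is, we have
\[
\min(v_{C_1}, v_{C_2}) = \frac{\min(h_{C_1}, h_{C_2}) - h_{C_0}}{n+1} \pmod{L_1},
\]
and analogously for the maximum. By \Cref{Pro: Min and max of cuts}, the height functions $\min(h_{C_1}, h_{C_2})$ and $\max(h_{C_1}, h_{C_2})$ are $L_1$-equivariant and correspond to $L_1$-periodic cuts of type $\gamma$. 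Hence the right-hand side above is of the form $v_C$ for some cut $C$ of type $\gamma$, showing closure of the set $\{v_C \mid \theta(C) = \gamma\}$ under $\min$ and $\max$.

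Next I would address finiteness. Since $Q_1$ is a finite set (the quiver $Q$ has $m$ vertices each with $n+1$ outgoing arrows), there are only finitely many subsets of $Q_1$, and in particular only finitely many $L_1$-periodic cuts on $\hat Q$. Together with the injectivity of $C \mapsto v_C$ already noted in the construction, this shows that the image sits as a finite subset of $\mathbb{Z}^m$. Combined with the closure established above, this image is a finite sublattice of $(\mathbb{Z}^m, \leq)$. Finally, since $(\mathbb{Z}^m, \leq)$ is distributive and any sublattice of a distributive lattice is distributive, the conclusion follows.

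I do not foresee a genuine obstacle here, as the corollary is essentially a repackaging of \Cref{Pro: Min and max of cuts}. The only point that requires a sentence of care is the compatibility between pointwise $\min/\max$ on height functions and componentwise $\min/\max$ on the vectors $v_C$, together with noting that subtracting the fixed reference $h_{C_0}$ and dividing by the positive integer $n+1$ preserves both operations.
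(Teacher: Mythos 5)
Your proposal is correct and follows essentially the same route as the paper, which states the corollary after the remark that Proposition~\ref{Pro: Min and max of cuts} ``clearly translates to relative height functions and hence to the vectors $v_C$,'' together with finiteness; your write-up simply makes the commutation of pointwise $\min/\max$ with the affine reparametrisation $h \mapsto (h - h_{C_0})/(n+1)$ and the sublattice-of-distributive-is-distributive step explicit.
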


We transfer notation along the bijection $C \mapsto v_C$.

\begin{Rem}
    It follows that the set $\{ C \mid C \text{ is an } L_1\text{-periodic cut, } \theta(C) = \gamma \}$ is a finite distributive lattice, where we write $C_1 \preceq C_2$ iff $v_{C_1} \leq v_{C_2}$. The join and meet of two cuts $C_1$ and $C_2$ is given by the cuts $C_1 \vee C_2$ and $C_1 \wedge C_2$ defined by the height functions $h_{\max}$ and $h_{\min}$ from \Cref{Pro: Min and max of cuts}. These satisfy $v_{C_1\wedge C_2}=\min (v_{C_1},v_{C_2})$ and $v_{C_1\vee C_2}=\max (v_{C_1},v_{C_2})$. We therefore refer to the lattice of cuts of type $\gamma$ from now on. 
\end{Rem}

Next, we will explore the cover relations of the lattice in question. Recall that a cover relation is a relation of the form $C_1 \prec C_2$ such that there is no $C_3$ with $C_1 \prec C_3 \prec C_2$. A cover relation is denoted by $C_1 \precdot C_2$. From now on we will mostly be interested in the case when $\gamma$ is positive, in other words, $\gamma_i>0$ for all $i\in \{1,\ldots, n+1\}$. In this case cover relations have a particularly nice interpretation. We will first briefly note which effect a mutation of a cut $C$ has on the vector $v_C$.
\begin{Rem}
\label{Rem: mutation effect on v}
 Let $C_1$ and $C_2$ be two $L_1$-periodic cuts of a positive type $\gamma$. Assume $C_1$ can be obtained from $C_2$ via mutation at a nonzero sink $s$. Then $v_{C_1}=v_{C_2}-e_i$, where $e_i$ is the $i$th standard vector, with $i$ being the position indexing the point $s+L_1$.    
\end{Rem}

\begin{Theo} \label{Theo: mutations are covers}
    Let $C_1$ and $C_2$ be two $L_1$-periodic cuts of a positive type $\gamma$. Then $C_1 \precdot C_2$ if and only if $C_1$ can be obtained from $C_2$ via a nonzero sink mutation (or, equivalently, $C_2$ can be obtained from $C_1$ via a nonzero source mutation).
\end{Theo}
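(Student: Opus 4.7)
The plan is to prove the two directions separately. For the easy direction ($\Leftarrow$), if $C_1 = \mu_s(C_2)$ for a nonzero sink $s$ of $\hat{Q}_{C_2}$, then by \Cref{Rem: mutation effect on v} one has $v_{C_1} = v_{C_2} - e_i$ where $e_i$ is the standard basis vector at the coordinate of $s + L_1$. Since no lattice vector of $(\mathbb{Z}^m, \leq)$ lies strictly between $v_{C_1}$ and $v_{C_2}$, no cut can either, so $C_1 \precdot C_2$.

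For the converse ($\Rightarrow$), the strategy is to produce a nonzero source $s$ of $\hat{Q}_{C_1}$ with $\mu_s(C_1) \preceq C_2$; the cover $C_1 \precdot C_2$ will then force $\mu_s(C_1) = C_2$, equivalently $C_1 = \mu_s(C_2)$ via the corresponding sink mutation. To find $s$, I consider the $L_1$-periodic function $f \colon L_0 \to \mathbb{Z}$ defined by $f(x) = \tfrac{1}{n+1}(h_{C_2}(x) - h_{C_1}(x))$, which is nonnegative, vanishes on $L_1$, and is nonzero somewhere since $v_{C_1} < v_{C_2}$. Let $M = \max f > 0$, and take a max point $x \in L_0$. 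Using $h_C(z+\alpha_i) - h_C(z) \in \{+1, -n\}$, with $-n$ exactly when the arrow $z \to z+\alpha_i$ lies in $C$, the inequality $f(y) \leq f(x)$ at each neighbour $y = x \pm \alpha_j$ of $x$ unpacks into two local rigidity statements at the max point $x$: every outgoing arrow at $x$ that lies in $C_1$ also lies in $C_2$, and every incoming arrow at $x$ that lies in $C_2$ also lies in $C_1$. Consequently, any incoming arrow to $x$ in $\hat{Q}_{C_1}$ lies in neither cut, both height functions increase by $+1$ along it, and its source is again a max point.

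The source $s$ is now produced by acyclicity. If no max point were a source of $\hat{Q}_{C_1}$, iterating the previous observation would produce an infinite backward chain $\ldots \to x_2 \to x_1 \to x_0$ of max points in $\hat{Q}_{C_1}$, hence an infinite path. This contradicts \Cref{Lem: Positive type is acyclic}, which applies since $\gamma$ is positive. Hence some max point $s$ is a source, and since $f$ vanishes on $L_1$ while $f(s) = M > 0$, the coset $s + L_1$ is nonzero. By \Cref{Rem: mutation effect on v} applied to the source mutation at $s$, one has $v_{\mu_s(C_1)} = v_{C_1} + e_j$ where $j$ indexes $s + L_1$; the fact that $s + L_1$ is a max point of $f$ means $v_{C_1}$ and $v_{C_2}$ disagree at $j$ with $v_{C_1} + e_j \leq v_{C_2}$, so $v_{C_1} < v_{\mu_s(C_1)} \leq v_{C_2}$. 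The cover property forces $\mu_s(C_1) = C_2$.

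The main obstacle is finding a max point which is simultaneously a source in $\hat{Q}_{C_1}$. The key input is the positivity of $\gamma$, which via \Cref{Lem: Positive type is acyclic} prevents infinite paths in $\hat{Q}_{C_1}$ and thus forces the backward chain of max points to terminate at a source.
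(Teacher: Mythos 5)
Your proof is correct, and its $\Rightarrow$ direction is a mirror image of the paper's: you walk \emph{backward} in $\hat{Q}_{C_1}$ among maximizers of $f = \tfrac{1}{n+1}(h_{C_2}-h_{C_1})$ until you reach a source $s$, then mutate $C_1$ upward to $\mu_s(C_1)$; the paper instead walks \emph{forward} in $\hat{Q}_{C_2}$ from any point with $f>0$ until it reaches a sink $s$, then mutates $C_2$ downward to $\mu_s(C_2)$. Both hinge on exactly the same ingredients — \Cref{Lem: Positive type is acyclic} to terminate the walk, the monotonicity of $f$ along paths in a cut quiver, the fact that $f$ vanishes on $L_1$ to guarantee $s\notin L_1$, \Cref{Rem: mutation effect on v} to see the mutation shifts $v$ by a standard basis vector, and the covering relation to force equality. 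Your max-point framing is a slight elaboration: the paper does not need to take a global maximum because monotonicity along the walk already preserves positivity of $f$, and for the same reason you could skip the maximum and just walk backward in $\hat{Q}_{C_1}$ from any point with $f>0$. Either way the argument goes through; the paper's version is a bit shorter, yours is perhaps a bit more transparent about why the terminal vertex has $f>0$.
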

 
\begin{proof}
    First, we assume that $C_1$ can be obtained from $C_2$ by a mutation at a nonzero sink $s$. Then by \Cref{Rem: mutation effect on v} we have $v_{C_1}=v_{C_2}-e_i$ for some $i$. Since there are no integer vectors strictly between $v_{C_1}$ and $v_{C_2}$,
    it follows that $C_1 \precdot C_2$. 
    Conversely, assume that $C_1 \precdot C_2 $, then in particular it follows that $h_{C_1}(x) \le h_{C_2}(x)$ for all $x \in L_0$. If we have $h_{C_1}(x) = h_{C_2}(x)$ for all $x \in L_0$ then $C_1 = C_2$, which is a contradiction, hence we fix some $x \in L_0$ such that $h_{C_1}(x) < h_{C_2}(x)$. Next, recall that it follows from \Cref{Lem: Positive type is acyclic} that there are no infinite paths in the infinite cut quiver $\hat{Q}_{C_2}$. We follow any path starting at $x$ in $\hat{Q}_{C_2}$, and this path must be finite, which means it ends at a sink $s \in L_0$. Since we followed a path in $\hat{Q}_{C_2}$, the height function $h_{C_2}$ increased along each arrow, so we have $h_{C_1}(s) < h_{C_2}(s)$. Note that $s \not \in L_1$, since otherwise \Cref{Pro: Height function values on L1} gives $h_{C_1}(s) = h_\gamma(s) = h_{C_2}(s)$. That means we can perform a nonzero sink mutation of $C_2$ at $s$ and obtain a new cut $C_3$. Note that $h_{C_1}(x)\le h_{C_2}(x)$ for all $x\in L_0$, combined with $h_{C_1}(s) < h_{C_2}(s)$, implies $v_{C_1}+e_i\le v_{C_2}$. Here $e_i$ is the $i$th standard vector with $i$ being the position indexing point $s+L_1$. Together with that, \Cref{Rem: mutation effect on v} gives $v_{C_3}=v_{C_2}-e_i$ with the same index $i$, hence we get $v_{C_1}\le v_{C_3}<v_{C_2}$. Since we assumed $C_1 \precdot C_2$, we cannot have $v_{C_1}<v_{C_3}<v_{C_2}$, hence $v_{C_1}=v_{C_3}$. 
    It follows that $C_1=C_3$, which by construction was obtained from $C_2$ by a single nonzero sink mutation.
\end{proof}

\begin{Cor}\label{Cor: Mutation is transitive}
Any pair of cuts $C_1$ and $C_2$ of a positive type $\gamma$ can be obtained one from another via a sequence of (nonzero) mutations.    
\end{Cor}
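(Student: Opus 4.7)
The plan is to deduce this directly from the lattice structure established in \Cref{Cor: Cuts are lattice} together with the cover-relation description in \Cref{Theo: mutations are covers}. Since the set of cuts of a fixed positive type $\gamma$ is a finite (distributive) lattice under $\preceq$, both $C_1 \vee C_2$ and $C_1 \wedge C_2$ exist in this set. The standard fact about finite posets that any comparable pair is linked by a saturated chain will then let us break the problem into cover relations, each of which is a nonzero mutation.

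More concretely, my first step is to form $C = C_1 \wedge C_2$ (say) in the lattice of cuts of type $\gamma$, which is again a cut of type $\gamma$. Since the lattice is finite and $C \preceq C_i$ for $i = 1, 2$, I can produce a saturated chain
\[ C = D_0^{(i)} \precdot D_1^{(i)} \precdot \cdots \precdot D_{k_i}^{(i)} = C_i \]
in the lattice, for each $i = 1, 2$. (Such a chain exists in any finite poset: starting from $C_i$, repeatedly replace the current element by a maximal element strictly below it that still lies above $C$; this process terminates at $C$ after finitely many steps since the poset is finite.) By \Cref{Theo: mutations are covers}, every cover relation $D \precdot D'$ corresponds to a nonzero sink mutation taking $D'$ to $D$, and dually to a nonzero source mutation taking $D$ to $D'$. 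Hence the chain above realises $C_i$ and $C$ as related by $k_i$ successive nonzero mutations.

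Concatenating, the sequence of nonzero mutations
\[ C_1 \to D_{k_1 - 1}^{(1)} \to \cdots \to D_1^{(1)} \to C \to D_1^{(2)} \to \cdots \to D_{k_2 - 1}^{(2)} \to C_2 \]
connects $C_1$ to $C_2$, proving the corollary.

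I do not anticipate any real obstacle: the content of the result is already contained in \Cref{Cor: Cuts are lattice} and \Cref{Theo: mutations are covers}, and the only additional ingredient is the elementary fact that in a finite poset any relation can be refined to a saturated chain of cover relations. The one point that deserves a sentence in the write-up is that every cover relation can be traversed in both directions as a nonzero mutation (sink vs.\ source), so that the concatenated chain is indeed a valid sequence of mutations from $C_1$ to $C_2$ rather than only in one direction.
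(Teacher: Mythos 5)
Your proposal is correct and is essentially identical to the paper's own proof: both take the meet $C_1 \wedge C_2$, connect it to each $C_i$ by a saturated chain of cover relations, and invoke \Cref{Theo: mutations are covers} to identify each cover with a nonzero mutation. The only difference is that you spell out the routine existence of saturated chains in a finite poset, which the paper leaves implicit.
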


\begin{proof}
Since $C_1\wedge C_2\preceq C_1$ and $C_1\wedge C_2\preceq C_2$, there are chains of covering relations $C_1\succdot \ldots \succdot C_1\wedge C_2\precdot\ldots\precdot C_2$. Each of these relations corresponds to a nonzero mutation and thus the statement holds.    
\end{proof}

Let us note an interesting consequence of the above. 

\begin{Rem}\label{Rem: Answering AIR}
    In \cite{AIR}, Amiot, Iyama and Reiten construct a cut for the skew-group algebra $R \ast G$ where $G$ is a cyclic group generated by a junior element. This is the same construction we used in \Cref{Pro: Decreasing arrows cut}. They furthermore note that it would be interesting to generalise their result to non-cyclic groups. Our results show that for abelian groups, there are no other cuts up to $n$-APR tilting and skewing. More precisely, it follows from \Cref{Pro: Cut constr} that the cuts we first construct arise by decomposing the abelian group $G \simeq H \times K$ so that $H$ is cyclic. Then $R \ast H$ is given a cut via Amiot-Iyama-Reiten's procedure, and this is extended to $R \ast G$ via the isomorphism $(R \ast G) \simeq (R \ast H ) \ast K$, see \cite[Section 5]{DramburgGasanova} for details. Then \Cref{Cor: Mutation is transitive} shows that every cut is mutation equivalent to one induced from Amiot-Iyama-Reiten's procedure, and hence any $n$-representation infinite algebra $\Lambda$ is $n$-APR tilting equivalent to some $\Lambda' \ast K$ where $K$ is an abelian group acting on the $n$-representation infinite algebra $\Lambda'$ obtained from Amiot-Iyama-Reiten's construction. 
\end{Rem}

\subsection{Constructing the minimal and the maximal element of the mutation lattice for a positive type}
For a fixed type $\gamma$, the mutation lattice is finite and distributive. Therefore, it has a unique maximal element, the construction of which is the goal of this subsection. In a dual procedure, one can construct the unique minimal element. We fix a type $\gamma$, and use the following notation throughout this subsection: for $x\in \mathbb{Z}^n$ and $z\in\mathbb{Z}$ we will write $(x,0)=(x_1,\ldots, x_n,0)$ and $u_z(x)=(x,0)-z\textbf{1}$. Further, we consider the linear map $\psi(y)= (y, 0) - \frac{\langle y, \gamma \rangle_n}{m} \mathbf{1}$ and write $L=\psi(L_1)\subset \mathbb{Z}^{n+1}$. Note that $\langle l,\gamma\rangle_{n+1}=0$ for any $l\in L$. For every $x\in L_0$, let $S_x=\{z\in\mathbb{Z}\mid \exists l\in L: u_z(x)\ge l\}$. 

\begin{Con}
    For every $x \in L_0$, the set $S_x$ is nonempty and bounded above. Therefore, we can construct the function 
    \[ p \colon L_0 \to \mathbb{Z}, x \to \max S_x.\]  
\end{Con}

\begin{proof}
    To see that $S_x$ is nonempty, simply note that by taking $z$ small enough, one can always reach a situation when $u_z(x)\ge (0,\ldots,0)\in L$. Now we show that $S_x$ is bounded above. Indeed, there exists $z_0$ such that for any $z\ge z_0$ all the coordinates of $u_z(x)$ are negative. Fix any $z_1 \ge z_0$. We claim that $z_1 \not \in S_x$. Assume on the contrary that $u_{z_1}(x)\ge l$ for some $l\in L$. Then $u_{z_1}(x)=l+\varepsilon$, where $\varepsilon$ is some vector with nonnegative coordinates. Then $0=\langle l, \gamma \rangle_{n+1}=\langle u_{z_1}(x)-\varepsilon, \gamma \rangle_{n+1}$, which is a contradiction since all the coordinates of $u_{z_1}(x)-\varepsilon$ are negative, and $\gamma$ is a nonzero nonnegative vector. Therefore, $S_x$ is a nonempty bounded above set of integers. Therefore, it has a maximum and hence the function $p$ is well defined.
\end{proof}

Next, we need to show the following properties of $p$. 

\begin{Lem}
\label{Lem: properties of p}
    For $y \in L_0$, we have $(1)\Rightarrow (2)\Rightarrow (3)$, where 
    \begin{enumerate}
    \item $u_{p(y)}(y)\in L$,
    \item $y\in L_1$,
    \item $p(y)=\frac{\langle y,\gamma\rangle_n}{m}$.
        
    \end{enumerate}
%    In this case, we have $u_{p(y)}(y)=\psi(y)$. 
\end{Lem}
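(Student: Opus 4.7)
The plan is to unpack the definitions directly: condition (1) is a coordinate equation that forces $y$ itself to be the $L_1$-element witnessing membership in $L$, and (3) is proved by sandwiching $p(y)$ between two matching bounds, one from exhibiting an element of $S_y$ and one from pairing against $\gamma$.

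For $(1)\Rightarrow(2)$, I would start from $u_{p(y)}(y)\in L=\psi(L_1)$ and pick $y'\in L_1$ with $u_{p(y)}(y)=\psi(y')$. Spelled out this reads
\[
(y,0)-p(y)\mathbf{1}=(y',0)-\tfrac{\langle y',\gamma\rangle_n}{m}\mathbf{1}.
\]
Reading off the $(n+1)$-th coordinate gives $p(y)=\tfrac{\langle y',\gamma\rangle_n}{m}$, and then the first $n$ coordinates force $y=y'$. So $y=y'\in L_1$ as desired.

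For $(2)\Rightarrow(3)$, assume $y\in L_1$ and set $z^\ast:=\tfrac{\langle y,\gamma\rangle_n}{m}$, which is an integer by \Cref{Theo: Divisibility conditions}. The lower bound $p(y)\ge z^\ast$ is immediate: $\psi(y)\in L$ and $u_{z^\ast}(y)=\psi(y)$, so $z^\ast\in S_y$. For the upper bound, let $z\in S_y$ with witness $l\in L$, so that $u_z(y)\ge l$ componentwise. Since $\gamma$ has non-negative entries (it is a type), I can pair the inequality with $\gamma$ to obtain
\[
\langle u_z(y),\gamma\rangle_{n+1}\;\ge\;\langle l,\gamma\rangle_{n+1}\;=\;0,
\]
using the fact already recorded in the text that $\langle l,\gamma\rangle_{n+1}=0$ for $l\in L$. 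Expanding the left-hand side as $\langle y,\gamma\rangle_n-zm$ yields $z\le z^\ast$, and since $z\in S_y$ was arbitrary, $p(y)\le z^\ast$.

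I don't anticipate a real obstacle here; the only thing that needs care is the $n$-versus-$(n+1)$ coordinate bookkeeping between $\langle\cdot,\cdot\rangle_n$ and $\langle\cdot,\cdot\rangle_{n+1}$, and the use of non-negativity of $\gamma$ (not positivity) in the pairing step, which is why the claim does not need $\gamma$ to be strictly positive. Note that the reverse implications $(3)\Rightarrow(2)\Rightarrow(1)$ are not claimed; indeed $(3)\Rightarrow(2)$ generally fails because $p$ extends $\tfrac{\langle\cdot,\gamma\rangle_n}{m}$ off of $L_1$ in a non-trivial way, which is exactly why $p$ will later be useful for producing the maximal cut.
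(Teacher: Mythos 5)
Your proof is correct. The $(1)\Rightarrow(2)$ step is essentially identical to the paper's. For $(2)\Rightarrow(3)$ the paper instead reindexes the maximum by translating $z\mapsto z+\tfrac{\langle y,\gamma\rangle_n}{m}$ and $L\mapsto L-\psi(y)$, reducing to the claim $\max\{z\in\mathbb{Z}\mid\exists l\in L:\ -z\mathbf{1}\ge l\}=0$, which it then proves by the same pairing-with-$\gamma$ argument you use. Your two-sided sandwich ($z^\ast\in S_y$ gives the lower bound; pairing $u_z(y)\ge l$ with the nonnegative vector $\gamma$ and using $\langle l,\gamma\rangle_{n+1}=0$ gives the upper bound) is a slightly more direct presentation of the same idea, and your explicit remark that $z^\ast$ is an integer by \Cref{Theo: Divisibility conditions} makes visible a fact the paper uses only implicitly in its reindexing. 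One small caveat on your closing aside: the paper's remark after the lemma notes that $(3)\Rightarrow(1)$ (hence $(3)\Rightarrow(2)$) does hold when $\gamma$ is strictly positive, so ``generally fails'' should be read as ``fails without the positivity hypothesis.''
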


\begin{proof} 
    $(1) \Rightarrow (2)$: Let $u_{p(y)}(y)\in L$. Then $u_{p(y)}(y)=\psi(y')$ for some $y'\in L_1$, which gives
    \[
    (y,0)-p(y)\textbf{1}=(y',0)-\frac{\langle y',\gamma \rangle_n}{m}\textbf{1},
    \]
    which, if one compares the last coordinates, gives $p(y)=\frac{\langle y',\gamma \rangle_n}{m}$ and hence $(y,0)=(y',0)$ and thus $y=y'\in L_1$. \medskip \newline 
    $(2) \Rightarrow (3)$: Let $y\in L_1$. Then
    \begin{align*} \hspace{0.8cm}
    p(y) &= \max\{z\in\mathbb{Z}\mid \exists l\in L: (y,0)-z\textbf{1}\ge l \}\\&= \max\left\{z\in\mathbb{Z}\mid \exists l\in L: (y,0)-\left(z+\frac{\langle y,\gamma\rangle_n}{m}\right)\textbf{1}\ge l \right\}+\frac{\langle y,\gamma\rangle_n}{m}\\&=\max\left\{z\in\mathbb{Z}\mid \exists l\in L: \psi(y)-z\textbf{1}\ge l \right\}+\frac{\langle y,\gamma\rangle_n}{m}\\&=\max\left\{z\in\mathbb{Z}\mid \exists l'\in L-\psi(y): -z\textbf{1}\ge l' \right\}+\frac{\langle y,\gamma\rangle_n}{m}\\&=\max\left\{z\in\mathbb{Z}\mid \exists l\in L: -z\textbf{1}\ge l \right\}+\frac{\langle y,\gamma\rangle_n}{m}.
\end{align*}
Now, we claim that $\max\left\{z\in\mathbb{Z}\mid \exists l\in L: -z\textbf{1}\ge l \right\}=0$. Indeed, if $z_0\in \mathbb{Z}$ belongs to this set, we get $-z_0\textbf{1}=l+\varepsilon$, where $\varepsilon$ is a nonnegative vector, and then
\[ -z_0m=\langle-z_0\textbf{1},\gamma\rangle_{n+1}=\langle l,\gamma\rangle_{n+1}+\langle \varepsilon,\gamma\rangle_{n+1}=\langle\varepsilon,\gamma\rangle_{n+1}\ge 0, \]
which gives $z_0\le 0$. Clearly, $0$ belongs to the above set, and thus we get $p(y)=\frac{\langle y,\gamma\rangle_n}{m}$.
\end{proof}

\begin{Rem}
 In fact, it is possible to show that $(1)\Leftrightarrow (2)\Rightarrow (3)$ in \Cref{Lem: properties of p}, and that the remaining implication $(3)\Rightarrow (1)$ holds if additionally $\gamma$ is assumed to be positive. However, we only ever need the implications $(1)\Rightarrow (2)\Rightarrow (3)$.   
\end{Rem}
Now we are ready to define a candidate height function for our maximal element.  

\begin{Theo}
\label{Theo: constructing max}
 Let $\gamma$ be a (not necessarily positive) type and let 
 \[ h \colon L_0 \to \mathbb{Z}, x \mapsto \langle u_{p(x)}(x), \mathbf{1} \rangle_{n+1}.\] 
 Then $h$ is an $L_1$-equivariant height function and $C_h$ is of type $\gamma$. 
\end{Theo}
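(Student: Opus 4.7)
The plan is to verify the three defining axioms of an $L_1$-equivariant height function in turn, and then identify the type of $C_h$ via \Cref{Cor: type from L1 height}. The computational heart is a translation property of the function $p$ itself.

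First, I claim $p(x+y)=p(x)+p(y)$ for $x\in L_0$ and $y\in L_1$. Write $(y,0) = \psi(y) + \frac{\langle y,\gamma\rangle_n}{m}\mathbf{1} = \psi(y)+p(y)\mathbf{1}$, using the implication $(2)\Rightarrow(3)$ of \Cref{Lem: properties of p}. Then $u_z(x+y) = u_{z-p(y)}(x)+\psi(y)$. Since $L = \psi(L_1)$ is a subgroup of $\mathbb{Z}^{n+1}$ and $\psi(y)\in L$, the map $l\mapsto l-\psi(y)$ is a bijection of $L$, so $z\in S_{x+y}$ iff $z-p(y)\in S_x$. Hence $S_{x+y} = S_x+p(y)$, giving the claim. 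Taking $y=0$ also yields $p(0)=0$ and thus $h(0)=0$.

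Next I check the step increment $h(x+\alpha_i)\in\{h(x)+1,\, h(x)-n\}$. For $i\le n$, $(x+\alpha_i,0) = (x,0)+e_i$, so $u_z(x+\alpha_i) = u_z(x)+e_i$. Since $e_i\ge 0$, this gives $S_x\subseteq S_{x+\alpha_i}$ and hence $p(x)\le p(x+\alpha_i)$. Conversely, if $u_z(x)+e_i\ge l$ for some $l\in L$, then $u_{z-1}(x) = u_z(x)+\mathbf{1}\ge l+(\mathbf{1}-e_i)\ge l$, so $z-1\in S_x$, whence $p(x+\alpha_i)\le p(x)+1$. Thus $p(x+\alpha_i)-p(x)\in\{0,1\}$, and since $h(x+\alpha_i)-h(x) = 1-(n+1)(p(x+\alpha_i)-p(x))$, the increment lies in $\{1,-n\}$. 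For $i=n+1$, the identity $u_z(x+\alpha_{n+1}) = u_{z+1}(x)+e_{n+1}$ lets me run the same componentwise inequality argument to conclude $p(x+\alpha_{n+1})-p(x)\in\{-1,0\}$; combined with $\langle (x+\alpha_{n+1},0)-(x,0),\mathbf{1}\rangle_{n+1}=-n$, the increment $h(x+\alpha_{n+1})-h(x)$ again lies in $\{1,-n\}$.

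Finally, $L_1$-equivariance follows instantly from the translation property: for $y\in L_1$,
\[
h(x+y)-h(x) = \langle y,\mathbf{1}\rangle_n - (n+1)\bigl(p(x+y)-p(x)\bigr) = \langle y,\mathbf{1}\rangle_n - (n+1)p(y) = h(y).
\]
Substituting $p(y)=\frac{\langle y,\gamma\rangle_n}{m}$ yields $h(y) = \langle y,\mathbf{1}-\tfrac{n+1}{m}\gamma\rangle_n$, so \Cref{Cor: type from L1 height} identifies $C_h$ as a cut of type $\gamma$. The main obstacle is the step-increment analysis, particularly the mild asymmetry at $i=n+1$ caused by $\alpha_{n+1}=-\sum_{j\le n}\alpha_j$; once $p$ is shown to change by exactly $0$ or $\pm 1$ at each generator, everything else is formal.
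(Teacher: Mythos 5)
Your proof is correct and follows essentially the same route as the paper's: you first establish the translation identity $p(x+y)=p(x)+p(y)$ for $y\in L_1$ via the group property of $L=\psi(L_1)$, analyze the step-size of $p$ along each generator by a componentwise inequality comparison, and conclude with \Cref{Cor: type from L1 height}. The only presentational differences are that you front-load the translation property (the paper places equivariance after the increment analysis) and that you reason directly on the sets $S_x$ rather than unwinding the $\max$ expression each time; you also spell out the $i=n+1$ case via the identity $u_z(x+\alpha_{n+1})=u_{z+1}(x)+e_{n+1}$, which the paper leaves as ``analogous.'' The content is the same.
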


\begin{proof} We first show that $h$ is a height function. Since $0\in L_1$, the implication $(2)\Rightarrow (3)$ of \Cref{Lem: properties of p} tells us that $p(0)=0$ and thus $h(0)=0$. Now we want to compare $h(x+\alpha_i)$ with $h(x)$ for all $i \in \{1,\ldots, n+1 \}$. First we will consider the case $i \in \{1,\ldots, n\}$, where we can without loss of generality assume $i=1$. 
Let $i=1$. We will first show that $p(x+\alpha_1)\in \{p(x),p(x)+1\}$. Indeed,
\begin{align*}
p(x)&=\max\{z\in\mathbb{Z}\mid \exists l\in L: (x,0)-z\textbf{1}\ge l \} \\
    &\le \max\{z\in\mathbb{Z}\mid \exists l\in L: (x,0)-z\textbf{1}+(1,0,\ldots,0)\ge l \}\\
    &\le \max\{z\in\mathbb{Z}\mid \exists l\in L: (x,0)-z\textbf{1}+\textbf{1}\ge l \}\\
    &=\max\{z\in\mathbb{Z}\mid \exists l\in L: (x,0)-(z-1)\textbf{1}\ge l \}\\
    &=\max\{z\in\mathbb{Z}\mid \exists l\in L: (x,0)-z\textbf{1}\ge l \}+1 = p(x)+1. 
\end{align*}
Since $p(x+\alpha_1)=\max\{z\in\mathbb{Z}\mid \exists l\in L: (x,0)-z\textbf{1}+(1,0,\ldots,0)\ge l \}$, we conclude that $p(x+\alpha_1)\in \{p(x),p(x)+1\}$. Recall that 
\[h(x+\alpha_1)-h(x)=\langle u_{p(x+\alpha_1)}(x+\alpha_1)-u_{p(x)}(x),\textbf{1}\rangle_{n+1}.\]
If $p(x+\alpha_1)=p(x)$, we get 
\begin{align*}
    u_{p(x+\alpha_1)}(x+\alpha_1)-u_{p(x)}(x)&=u_{p(x)}(x+\alpha_1)-u_{p(x)}(x)\\&=u_{p(x)}(x)+(1,0,\ldots,0)-u_{p(x)}(x)=(1,0,\ldots,0)
\end{align*}
and hence $h(x+\alpha_1)-h(x)=1$. If $p(x+\alpha_1)=p(x)+1$, we get
\begin{align*}
   u_{p(x+\alpha_1)}(x+\alpha_1)-u_{p(x)}(x)&=u_{p(x)+1}(x+\alpha_1)-u_{p(x)}(x)\\&=u_{p(x)+1}(x)+(1,0,\ldots,0)-u_{p(x)}(x)\\&=u_{p(x)}(x)-\textbf{1}+(1,0,\ldots,0)-u_{p(x)}(x)=(0,-1,\ldots,-1)
\end{align*}
and hence $h(x+\alpha_1)-h(x)=-n$.

The case $i=n+1$ can be treated analogously, showing that 
\[ p\left(x- \left(\sum_{i= 1}^n \alpha_i \right)\right) = p(x + \alpha_{n+1}) \in \{p(x)-1,p(x)\},\] 
and that $h(x+\alpha_{n+1})-h(x)=1$ when $p(x+\alpha_{n+1})=p(x)-1$ and $h(x+\alpha_{n+1})-h(x)= -n$ when $p(x+\alpha_{n+1})=p(x)$. Therefore, we conclude that $h$ is a height function. Now we want to show that $h$ is $L_1$-equivariant. In order to show this, it is enough to show $L_1$-equivariance of $p(x)$. Indeed, $p(x+y)=p(x)+p(y)$ implies 
\begin{align*}
    h(x+y)&=\langle u_{p(x+y)}(x+y),\textbf{1}\rangle_{n+1}\\&=\langle u_{p(x)+p(y)}(x+y),\textbf{1}\rangle_{n+1}=\langle u_{p(x)}(x)+u_{p(y)}(y),\textbf{1}\rangle_{n+1}\\&=\langle u_{p(x)}(x),\textbf{1}\rangle_{n+1}+\langle u_{p(y)}(y),\textbf{1}\rangle_{n+1}=h(x)+h(y).
\end{align*}

Now, the implication $(2)\Rightarrow (3)$ of \Cref{Lem: properties of p} tells us that for any $y\in L_1$ we have $p(y)=\frac{\langle y,\gamma\rangle_n}{m}$ and hence $u_{p(y)}(y)=\psi(y)$. Therefore, 
\begin{align*}
 p(x+y)&=\max\{z\in\mathbb{Z}\mid \exists l\in L: (x+y,0)-z\textbf{1}\ge l\}\\&=\max\{z\in\mathbb{Z}\mid \exists l\in L: (x,0)-(z-p(y))\textbf{1}+(y,0)-p(y)\textbf{1}\ge l\}\\&=\max\{z\in\mathbb{Z}\mid \exists l\in L: (x,0)-(z-p(y))\textbf{1}+\psi(y)\ge l\}\\&=\max\{z\in\mathbb{Z}\mid \exists l'\in L-\psi(y): (x,0)-(z-p(y))\textbf{1}\ge l'\}\\&=\max\{z\in\mathbb{Z}\mid \exists l\in L: (x,0)-(z-p(y))\textbf{1}\ge l\}\\&=\max\{z\in\mathbb{Z}\mid \exists l\in L: (x,0)-z\textbf{1}\ge l\}+p(y)=p(x)+p(y).   
\end{align*}

We conclude that $h(x)$ is an equivariant height function. Therefore, it defines an $L_1$-periodic cut $C_h$. As before, \Cref{Lem: properties of p} gives us $u_{p(y)}(y)=\psi(y)$ for every $y \in L_1$, and we compute
\begin{align*}
 h(y)&=\langle u_{p(y)}(y),\textbf{1}\rangle_{n+1}=\left\langle(y,0)-\frac{\langle y,\gamma\rangle_{n}}{m}\textbf{1},\textbf{1}\right\rangle_{n+1}\\&=\langle(y,0),\textbf{1}\rangle_{n+1}-\frac{\langle y,\gamma\rangle_{n}}{m}\langle\textbf{1},\textbf{1}\rangle_{n+1}\\&=\langle y,\textbf{1}\rangle_n-\frac{n+1}{m}\langle y,\gamma\rangle_{n}=\left\langle y,\textbf{1}-\frac{n+1}{m}\gamma\right\rangle,   
\end{align*}
hence $C_h$ has type $\gamma$ by \Cref{Cor: type from L1 height}. 
\end{proof}

\begin{Pro}
    Let $h$ be as in \Cref{Theo: constructing max}. If $x$ is a source of $\hat{Q}_{C_h}$, then $x \in L_1$. 
\end{Pro}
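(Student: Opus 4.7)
The plan is to show that $w := u_{p(x)}(x)$ lies in $L$, which will give $x \in L_1$ directly via the implication $(1)\Rightarrow(2)$ of \Cref{Lem: properties of p}. By definition of $p(x)$ there exists at least one $l_0 \in L$ with $l_0 \leq w$, so the idea is to use the source hypothesis to force every coordinate of $l_0$ to coincide with the corresponding coordinate of $w$, and hence $l_0 = w$.

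First I would translate the source condition via the cut--height bijection of \Cref{Pro: bijection cuts height functions}: the incoming arrows $x - \alpha_i \to x$ all lying in $C_h$ is equivalent to $h(x) - h(x - \alpha_i) = -n$ for every $i \in \{1,\ldots, n+1\}$. The case analysis of $h$ carried out in the proof of \Cref{Theo: constructing max} then shows that this is equivalent to $p(x - \alpha_i) = p(x) - 1$ for $i \leq n$, and to $p(x - \alpha_{n+1}) = p(x)$ for $i = n+1$. Unpacking these through the definitions of $S_\bullet$ and $u_z$, both collapse to the uniform condition
\[ w - e_i \not\geq l \quad \text{for every } l \in L, \text{ for every } i \in \{1, \ldots, n+1\},\]
where $e_i$ is the $i$-th standard basis vector of $\mathbb{Z}^{n+1}$. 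Concretely, for $i \leq n$ this is just $p(x) \not\in S_{x - \alpha_i}$ combined with the identity $u_{p(x)}(x - \alpha_i) = w - e_i$, while for $i = n+1$ it is $p(x)+1 \not\in S_{x - \alpha_{n+1}}$ together with $u_{p(x)+1}(x - \alpha_{n+1}) = w - e_{n+1}$ (a short rewrite using $\alpha_{n+1} = -\sum_{j=1}^n \alpha_j$).

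With this uniform constraint in hand, I would fix any $l_0 \in L$ with $l_0 \leq w$ and apply the non-comparison coordinatewise. For each $i$, the failure $w - e_i \not\geq l_0$ cannot occur at any coordinate $k \neq i$, since there $(w - e_i)_k = w_k \geq (l_0)_k$; it must therefore occur at the $i$-th coordinate, giving $(l_0)_i \geq w_i$. Combined with $(l_0)_i \leq w_i$, this forces $(l_0)_i = w_i$. Letting $i$ range over $\{1,\ldots,n+1\}$ gives $l_0 = w$, hence $w \in L$, and \Cref{Lem: properties of p} yields $x \in L_1$.

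The main subtlety is the translation step: the asymmetric role of $\alpha_{n+1}$ means that the source condition on $\alpha_{n+1}$ passes through the level $p(x)+1$, whereas the source conditions on $\alpha_1, \ldots, \alpha_n$ pass through the level $p(x)$, and seeing that both produce the same uniform inequality $w - e_i \not\geq l$ is where careful bookkeeping with $u_z$ is needed. Once this is set up, the remaining coordinate argument is essentially a one-liner.
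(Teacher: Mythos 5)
Your proof is correct and follows essentially the same route as the paper's: translate the source condition at $x$ into the equalities $p(x-\alpha_i)=p(x)-1$ for $i\leq n$ and $p(x-\alpha_{n+1})=p(x)$, unpack these through $S_\bullet$ and $u_z$ to get that no $l\in L$ satisfies $l\leq w-e_i$, then run the coordinatewise argument against some $l_0\leq w$ to force $l_0=w\in L$, and conclude via the implication $(1)\Rightarrow(2)$ of \Cref{Lem: properties of p}. Your packaging of the two cases ($i\leq n$ vs.\ $i=n+1$) into the single uniform condition $w-e_i\not\geq l$ is a slightly cleaner presentation of the same bookkeeping, not a different argument.
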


\begin{proof}
Assume $x=(x_1,\ldots,x_n)$ is a source. Then all arrows $x-\alpha_i\to x$ for $i \in \{ 1,\ldots,n+1 \}$ belong to $\hat{C}_h$. In other words, $h(x)-h(x-\alpha_i)=-n$ for all $i \in \{1,\ldots,n+1\}$. From the proof of \Cref{Theo: constructing max} we know that this happens if and only if $p(x)=p(x-\alpha_i)+1$ for $i \in \{1,\ldots,n\}$ and $p(x)=p(x-\alpha_{n+1})$. By definition of $p(x)$ we know that there is some $l=(l_1,\ldots,l_{n+1})\in L$ such that $(x,0)-p(x)\textbf{1}\ge l$. Since $p(x-\alpha_1)=p(x)-1$, we get that $(x-\alpha_1,0)-p(x)\textbf{1}$ can not be greater or equal to any element in $L$, in particular, we get
\begin{align*}
  l\not\le(x-\alpha_1,0)-p(x)\textbf{1}=(x,0)-p(x)\textbf{1}-(1,0,\ldots,0).  
\end{align*}
Combining this with $l\le (x,0)-p(x)\textbf{1}$ gives $l_1=x_1-p(x)$. The same holds for all $i \in \{1,\ldots, n\}$. Finally, we have $p(x)=p(x-\alpha_{n+1})$, which means $(x-\alpha_{n+1},0)-(p(x)+1)\textbf{1}$ can not be greater or equal to any element in $L$, in particular, we get
\begin{align*}
  l\not\le(x+\alpha_1
+\ldots+\alpha_n,0)-(p(x)+1)\textbf{1}=(x,0)-p(x)\textbf{1}-(0,\ldots,0,1).  
\end{align*}
Combining this with $l\le (x,0)-p(x)\textbf{1}$ gives $l_{n+1}=-p(x)$. Therefore, we conclude that $u_{p(x)}(x)=(x,0)-p(x)\textbf{1}=l\in L$, and then the implication $(1)\Rightarrow (2)$ of \Cref{Lem: properties of p} implies that $x\in L_1$.
\end{proof}

To show that $C_h$ is maximal, we need to assume that $\gamma$ is positive, since then mutations and cover relations coincide by \Cref{Theo: mutations are covers}. 

\begin{Cor}
If $\gamma$ is positive, then $C_h$ constructed in \Cref{Theo: constructing max} is the unique maximal element in its mutation lattice. Furthermore, $Q_{C_h}$ has a unique source, located at $0$. 
\end{Cor}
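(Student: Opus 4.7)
The plan is to combine the immediately preceding proposition, which constrains the sources of $\hat{Q}_{C_h}$ to lie in $L_1$, with \Cref{Theo: mutations are covers} and the fact that every nonempty finite lattice has a unique maximum. I would split the argument into a source-count step, a maximality step, and a uniqueness step.

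First I would show that $Q_{C_h}$ has a unique source, located at $0$. Since $\gamma$ is positive, \Cref{Lem: Positive type is acyclic} implies that $\hat{Q}_{C_h}$ contains no infinite directed path; hence, following arrows backwards from any vertex terminates at a source, so at least one source exists. The preceding proposition then forces every such source to lie in $L_1$. Because $C_h$ is $L_1$-periodic, the set of sources is $L_1$-invariant, so in fact every point of $L_1$ is a source of $\hat{Q}_{C_h}$; in particular $0$ is. Passing to the quotient $Q_{C_h}$, these sources all collapse to the single vertex corresponding to $0$, which gives the uniqueness claim for the source.

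Next, for maximality of $C_h$ in the mutation lattice, I would apply \Cref{Theo: mutations are covers}: since $\gamma$ is positive, every cover relation $C_h \precdot C'$ corresponds bijectively to a nonzero source mutation of $C_h$. But the previous step shows that $C_h$ has no nonzero source in $Q_{C_h}$, so no such mutation is available, and therefore no element of the mutation lattice covers $C_h$. This forces $C_h$ to be maximal in the underlying poset.

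Finally, uniqueness comes for free: by \Cref{Cor: Cuts are lattice} the set of cuts of type $\gamma$ is a finite distributive lattice, and any finite lattice has a (unique) maximum element, which must agree with any maximal element of the underlying poset. Hence $C_h$ equals this maximum and the corollary follows. I do not foresee a substantive obstacle here, as the technical content is already carried out in \Cref{Theo: constructing max} and in the proposition immediately preceding the corollary; the present statement simply packages those two results together with the mutation--cover correspondence.
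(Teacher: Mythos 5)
Your argument is correct and follows essentially the same route as the paper: both proofs combine \Cref{Lem: Positive type is acyclic} (to guarantee a source exists), the proposition immediately preceding the corollary (to force all sources into $L_1$, hence to $0$), and \Cref{Theo: mutations are covers} (to conclude no cut covers $C_h$). The only difference is cosmetic — you prove the source claim first and deduce maximality from it, whereas the paper argues maximality by contradiction first — and your extra remark that a finite lattice has a unique maximum is implicit in the paper's appeal to \Cref{Cor: Cuts are lattice}.
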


\begin{proof}
Assume that $C_h$ is not maximal, then there exists $D\succ C_h$ and a chain of cover relations $C_h\precdot\ldots\precdot D$. Then by \Cref{Theo: mutations are covers}, $Q_{C_h}$ has to have a nonzero source, which is a contradiction with \Cref{Theo: constructing max}. Finally, note that $Q_{C_h}$ is acyclic by \Cref{Lem: Positive type is acyclic}, and since $Q_{C_h}$ is in addition finite, it must have a source, which must be located at $0$.  
\end{proof}

On the other hand, if $\gamma$ is not positive, the corresponding finite distributive lattice is harder to understand. Cover relations are not given by mutations, since none of the cut quivers have sources or sinks. The construction from \Cref{Theo: constructing max} still produces a cut, but we are aware neither of a proof that it is the maximal element, nor of a counterexample.

\begin{Ques}
Let $\gamma$ be a nonpositive type and let $C_h$ be the cut constructed in \Cref{Theo: constructing max}. Is it true that $C_h$ is the maximal element in the corresponding mutation lattice?
\end{Ques}

\section*{Acknowledgement}
We thank Martin Herschend and David Ploog for many helpful discussions. The second author was supported by a fellowship from the Wenner-Gren Foundations (grant WGF2022-0052).

\printbibliography

@article{HIO,
title = {n-representation infinite algebras},
journal = {Advances in Mathematics},
volume = {252},
pages = {292-342},
year = {2014},
issn = {0001-8708},
doi = {https://doi.org/10.1016/j.aim.2013.09.023},
author = {Martin Herschend and Osamu Iyama and Steffen Oppermann},
keywords = {Auslander–Reiten theory, Preprojective algebra, Fano algebra, -representation finite algebra, Representation dimension}
}

@article{BSW,
title = {Superpotentials and higher order derivations},
journal = {Journal of Pure and Applied Algebra},
volume = {214},
number = {9},
pages = {1501-1522},
year = {2010},
issn = {0022-4049},
doi = {https://doi.org/10.1016/j.jpaa.2009.07.013},
author = {Raf Bocklandt and Travis Schedler and Michael Wemyss}
}

@article{Thibault,
author = {Thibault, Louis-Philippe},
year = {2016},
month = {03},
pages = {},
title = {Preprojective algebra structure on skew-group algebras},
volume = {365},
journal = {Advances in Mathematics},
doi = {10.1016/j.aim.2020.107033}
}

@article{AIR,
 ISSN = {00029327, 10806377},
 URL = {https://www.jstor.org/stable/26571360},
 author = {Claire Amiot and Osamu Iyama and Idun Reiten},
 journal = {American Journal of Mathematics},
 number = {3},
 pages = {813--857},
 publisher = {The Johns Hopkins University Press},
 title = {Stable categories of Cohen-Macaulay modules and cluster categories},
 volume = {137},
 year = {2015}
}

@article{IyamaOppermannStable,
title = {Stable categories of higher preprojective algebras},
journal = {Advances in Mathematics},
volume = {244},
pages = {23-68},
year = {2013},
issn = {0001-8708},
doi = {https://doi.org/10.1016/j.aim.2013.03.013},
author = {Osamu Iyama and Steffen Oppermann},
keywords = {Higher preprojective algebra, Stable category, -cluster tilting, -representation finite algebra, -cluster category},
}

@article{IyamaOppermann,
 ISSN = {00029947},
 URL = {http://www.jstor.org/stable/41307493},
 author = {Osamu Iyama and Steffen Oppermann},
 journal = {Transactions of the American Mathematical Society},
 number = {12},
 pages = {6575--6614},
 publisher = {American Mathematical Society},
 title = {n-representation-finite algebras and n-APR tilting},
 volume = {363},
 year = {2011}
}

@article{Giovannini,
author = {Simone Giovannini},
title = {Higher representation infinite algebras from McKay quivers of metacyclic groups},
journal = {Communications in Algebra},
volume = {47},
number = {9},
pages = {3672-3711},
year  = {2019},
publisher = {Taylor & Francis},
doi = {10.1080/00927872.2019.1570230}
}

@article{KellerVandenBergh,
title = {Deformed Calabi–Yau completions},
author = {Bernhard Keller and Michel Van den Bergh},
pages = {125--180},
volume = {2011},
number = {654},
journal = {Journal für die reine und angewandte Mathematik},
doi = {doi:10.1515/crelle.2011.031},
year = {2011},
lastchecked = {2023-04-18}
}

@article{Nakajima,
    author = {Nakajima, Yusuke},
    title = "{On 2-Representation Infinite Algebras Arising From Dimer Models}",
    journal = {The Quarterly Journal of Mathematics},
    volume = {73},
    number = {4},
    pages = {1517-1553},
    year = {2022},
    month = {07},
    issn = {0033-5606},
    doi = {10.1093/qmath/haac016}
}

@article{Herschend_Iyama_2011, 
title={Selfinjective quivers with potential and 2-representation-finite algebras}, 
volume={147}, DOI={10.1112/S0010437X11005367}, 
number={6}, 
journal={Compositio Mathematica}, 
author={Herschend, Martin and Iyama, Osamu}, 
year={2011}, 
pages={1885–1920}
}

@misc{BuchweitzHille,
    title = {},
    author = {Buchweitz, Ragnar-Olaf and Hille, Lutz},
    howpublished = {In preparation}
}

@incollection{MR0604577,
    AUTHOR = {McKay, John},
     TITLE = {Graphs, singularities, and finite groups},
 BOOKTITLE = {The {S}anta {C}ruz {C}onference on {F}inite {G}roups ({U}niv.
              {C}alifornia, {S}anta {C}ruz, {C}alif., 1979)},
    SERIES = {Proc. Sympos. Pure Math.},
    VOLUME = {37},
     PAGES = {183--186},
 PUBLISHER = {Amer. Math. Soc., Providence, RI},
      YEAR = {1980},
      ISBN = {0-8218-1440-0},
   MRCLASS = {20C99 (05C25 14B05 17B10)},
  MRNUMBER = {604577},
}

@book{CLS,
  title={Toric varieties},
  author={Cox, David A. and Little, John B. and Schenck, Henry K.},
  volume={124},
  year={2011},
  publisher={American Mathematical Soc.}
}

@article{DramburgGasanova,
title = {The 3-preprojective algebras of type Ã},
journal = {Journal of Pure and Applied Algebra},
volume = {228},
number = {12},
pages = {107760},
year = {2024},
issn = {0022-4049},
doi = {https://doi.org/10.1016/j.jpaa.2024.107760},
url = {https://www.sciencedirect.com/science/article/pii/S0022404924001579},
author = {Darius Dramburg and Oleksandra Gasanova}
}

@article{IUDimerSpecial,
  title={Dimer models and the special McKay correspondence},
  author={Ishii, Akira and Ueda, Kazushi},
  journal={Geometry \& Topology},
  volume={19},
  number={6},
  pages={3405--3466},
  year={2016},
  publisher={Mathematical Sciences Publishers}
}

@misc{kenyon2009lecturesdimers,
      title={Lectures on Dimers}, 
      author={Richard Kenyon},
      year={2009},
      eprint={0910.3129},
      archivePrefix={arXiv},
      primaryClass={math.PR},
      url={https://arxiv.org/abs/0910.3129}, 
}

@article{NillZiegler,
 ISSN = {0364765X, 15265471},
 URL = {http://www.jstor.org/stable/23012336},
 author = {Benjamin Nill and Günter M. Ziegler},
 journal = {Mathematics of Operations Research},
 number = {3},
 pages = {462--467},
 publisher = {INFORMS},
 title = {Projecting Lattice Polytopes Without Interior Lattice Points},
 urldate = {2024-08-21},
 volume = {36},
 year = {2011}
}

@book{BrunsHerzog,
  title={Cohen-macaulay rings},
  author={Bruns, Winfried and Herzog, J{\"u}rgen},
  number={39},
  year={1998},
  publisher={Cambridge university press}
}

@inbook{ItoReid,
url = {https://doi.org/10.1515/9783110814736.221},
title = {The McKay correspondence for finite subgroups of SL (3, C)},
booktitle = {Higher Dimensional Complex Varieties},
booktitle = {Proceedings of the International Conference held in Trento, Italy, June 15 - 24, 1994},
author = {Yukari Ito and Miles Reid},
editor = {Marco Andreatta and Thomas Peternell},
publisher = {De Gruyter},
address = {Berlin, New York},
pages = {221--240},
doi = {doi:10.1515/9783110814736.221},
isbn = {9783110814736},
year = {1996},
lastchecked = {2024-08-22}
}

@article{IUDimerCrepant,
  title={Dimer models and crepant resolutions},
  author={Akira Ishii and Kazushi Ueda},
  journal={Hokkaido Mathematical Journal},
  volume={45},
  number={1},
  pages={1--42},
  year={2016},
  publisher={Hokkaido University, Department of Mathematics}
}

@article{IUModuliDimer,
  title={On moduli spaces of quiver representations associated with dimer models},
  author={Ishii, Akira and Ueda, Kazushi},
  journal={arXiv preprint arXiv:0710.1898},
  year={2007}
}

@incollection{YamagishiIAbelian,
  title={Moduli of $ G $-constellations and crepant resolutions I: the abelian case},
  author={Yamagishi, Ryo},
  booktitle={McKay Correspondence, Mutation and Related Topics},
  volume={88},
  pages={159--194},
  year={2023},
  publisher={Mathematical Society of Japan}
}

@article{Watanabe,
author = {Watanabe, Keiichi},
year = {1974},
month = {08},
pages = {},
title = {Certain invariant subrings are Gorenstein. II},
volume = {11},
journal = {Osaka J. Math.}
}

@article{MizunoYamaura,
title = {Higher APR tilting preserves n-representation infiniteness},
journal = {Journal of Algebra},
volume = {447},
pages = {56-73},
year = {2016},
issn = {0021-8693},
doi = {https://doi.org/10.1016/j.jalgebra.2015.09.028},
url = {https://www.sciencedirect.com/science/article/pii/S002186931500486X},
author = {Yuya Mizuno and Kota Yamaura},
keywords = {n-representation infinite algebras, Calabi–Yau algebras, Higher APR tilting modules, Higher preprojective algebras}
}

@article{DarpöIyama,
title = {d-Representation-finite self-injective algebras},
journal = {Advances in Mathematics},
volume = {362},
pages = {106932},
year = {2020},
issn = {0001-8708},
doi = {https://doi.org/10.1016/j.aim.2019.106932},
url = {https://www.sciencedirect.com/science/article/pii/S000187081930547X},
author = {Erik Darpö and Osamu Iyama},
keywords = {Self-injective algebra, Repetitive category, Galois covering, -Representation-finite algebra, -Cluster-tilting, Auslander–Reiten theory}
}

@book{SGA3, 
title = {Schemas en Groupes. Seminaire de Geometrie Algebrique du Bois Marie 1962/64 (SGA 3), I: Proprietes Generales des Schemas en Groupes}, 
author = {Alexandre Grothendieck and Michel Demazure}, 
doi = {https://doi.org/10.1007/BFb0058993}, 
publisher = {Springer Berlin, Heidelberg}, 
year = {1970}
}

@article{MarcosMartinezVMartins,
  title={Hochschild cohomology of skew group rings and invariants},
  author={Eduardo N. Marcos and Roberto Martinez-Villa and Maria Izabel R. Martins},
  journal={Open Mathematics},
  volume={2},
  number={2},
  pages={177--190},
  year={2004},
  publisher={Versita}
}

@article {BGP,
    AUTHOR = {Bern\v ste\u in, Joseph N. and Gel'fand, Izrail Moiseevich and
              Ponomarev, V. A.},
     TITLE = {Coxeter functors, and {G}abriel's theorem},
   JOURNAL = {Uspehi Mat. Nauk},
  FJOURNAL = {Akademija Nauk SSSR i Moskovskoe Matemati\v ceskoe Ob\v s\v
              cestvo. Uspehi Matemati\v ceskih Nauk},
    VOLUME = {28},
      YEAR = {1973},
    NUMBER = {2(170)},
     PAGES = {19--33},
      ISSN = {0042-1316}
}

@article{APR,
  title={Coxeter functors without diagrams},
  author={Maurice Auslander and Mar{\'i}a In{\'e}s Platzeck and Idun Reiten},
  journal={Transactions of the American Mathematical Society},
  year={1979},
  volume={250},
  pages={1-46}
}

@book{Stanley,
  AUTHOR = {Stanley, Richard},
  TITLE = {Enumerative combinatorics},
  SERIES = {Cambridge Studies in Advanced Mathematics 49},
  PUBLISHER = {Cambridge University Press},
  YEAR = {2012},
  edition={2},
  volume={1},
  ISBN = {978-1-107-60262-5}
}

@article{LammersDimer,
author = {Piet Lammers},
title = {{A generalisation of the honeycomb dimer model to higher dimensions}},
volume = {49},
journal = {The Annals of Probability},
number = {2},
publisher = {Institute of Mathematical Statistics},
pages = {1033 -- 1066},
keywords = {Dimer model, height functions, Limit shapes, random tilings, surface tension strict convexity, Variational principle},
year = {2021},
doi = {10.1214/20-AOP1469},
URL = {https://doi.org/10.1214/20-AOP1469}
}

@misc{DramburgSandoy,
      title={On compatibility of Koszul- and higher preprojective gradings}, 
      author={Darius Dramburg and Mads Hustad Sandøy},
      year={2024},
      eprint={2411.13283},
      archivePrefix={arXiv},
      primaryClass={math.RT},
      url={https://arxiv.org/abs/2411.13283}, 
}

\end{document}